\def\@settitle{%
  \vspace*{-20pt}
  \begin{flushleft}%
    \baselineskip14\p@\relax
    \normalfont\bfseries\LARGE
    \@title
  \end{flushleft}%
}
\def\@setauthors{%
  \begingroup
  \def\thanks{\protect\thanks@warning}%
  \trivlist
  \large \@topsep30\p@\relax
  \advance\@topsep by -\baselineskip
  \item\relax
  \author@andify\authors
  \def\\{\protect\linebreak}%
  \authors
  \ifx\@empty\contribs
  \else
    ,\penalty-3 \space \@setcontribs
    \@closetoccontribs
  \fi
  \normalfont
  \@setaddresses
  \endtrivlist
  \endgroup
}
\def\@setaddresses{\par
  \nobreak \begingroup\raggedright
  \small
  \def\author##1{\nobreak\addvspace\smallskipamount}%
  \def\\{\unskip, \ignorespaces}%
  \interlinepenalty\@M
  \def\address##1##2{\begingroup
    \par\addvspace\bigskipamount\noindent
    \@ifnotempty{##1}{(\ignorespaces##1\unskip) }%
    {\ignorespaces##2}\par\endgroup}%
  \def\curraddr##1##2{\begingroup
    \@ifnotempty{##2}{\nobreak\noindent\curraddrname
      \@ifnotempty{##1}{, \ignorespaces##1\unskip}\/:\space
      ##2\par}\endgroup}%
  \def\email##1##2{\begingroup
    \@ifnotempty{##2}{\smallskip\nobreak\noindent E-mail address%
      \@ifnotempty{##1}{, \ignorespaces##1\unskip}\/:\space
      \ttfamily##2\par}\endgroup}%
  \def\urladdr##1##2{\begingroup
    \def~{\char`\~}%
    \@ifnotempty{##2}{\nobreak\noindent\urladdrname
      \@ifnotempty{##1}{, \ignorespaces##1\unskip}\/:\space
      \ttfamily##2\par}\endgroup}%
  \addresses
  \endgroup
  \global\let\addresses=\@empty
}
\def\@setabstracta{%
    \ifvoid\abstractbox
  \else
    \skip@25\p@ \advance\skip@-\lastskip
    \advance\skip@-\baselineskip \vskip\skip@
    \box\abstractbox
    \prevdepth\z@ 
    \vskip-10pt
  \fi
}
\renewenvironment{abstract}{%
  \ifx\maketitle\relax
    \ClassWarning{\@classname}{Abstract should precede
      \protect\maketitle\space in AMS document classes; reported}%
  \fi
  \global\setbox\abstractbox=\vtop \bgroup
    \normalfont\small
    \list{}{\labelwidth\z@
      \leftmargin0pc \rightmargin\leftmargin
      \listparindent\normalparindent \itemindent\z@
      \parsep\z@ \@plus\p@
      
    }%
    \item[\hskip\labelsep\bfseries\abstractname.]%
}{%
  \endlist\egroup
  \ifx\@setabstract\relax \@setabstracta \fi
}
\def\section{\@startsection{section}{1}%
  \z@{-1.2\linespacing\@plus-.5\linespacing}{.8\linespacing}%
  {\normalfont\bfseries\large}}
\def\subsection{\@startsection{subsection}{2}%
  \z@{-.8\linespacing\@plus-.3\linespacing}{.3\linespacing\@plus.2\linespacing}%
  {\normalfont\bfseries}}
\def\subsubsection{\@startsection{subsubsection}{3}%
  \z@{.7\linespacing\@plus.1\linespacing}{-1.5ex}%
  {\normalfont\itshape}}
\def\@secnumfont{\bfseries}
\def\N{\mathbb{N}}
\def\Z{\mathbb{Z}}
\def\Q{\mathbb{Q}}
\def\R{\mathbb{R}}
\def\C{\mathbb{C}}
\def\P{\mathbb{P}}
\def\pa{\partial}
\def\Ker{\operatorname{Ker}}
\def\Im{\operatorname{Im}}
\def\Hom{\operatorname{Hom}}
\def\+{\oplus}
\def\u{\textbf{\textup{u}}}
\def\v{\textbf{\textup{v}}}
\def\x{\textbf{\textup{x}}}
\def\m{\textbf{\textup{m}}}
\def\z{\textbf{\textup{z}}}
\def\y{\textbf{\textup{y}}}
\def\T{\textup{Trop}}
\def\Trop{\textup{Trop}}
\def\trop{\textup{trop}}
\def\ev{\textup{ev}}
\def\ulp{\textup{(}}
\def\urp{\textup{)}\,}
\def\scr{\mathscr}
\def\mcal{\mathcal}
\definecolor{darkgreen}{rgb}{0.0, 0.2, 0.13}
\definecolor{green2}{rgb}{0.0, 0.5, 0.0}
\theoremstyle{plain}
\newtheorem{theorem}{Theorem}[section]
\newtheorem{thmx}{Theorem}
\newtheorem{proposition}[theorem]{Proposition}
\newtheorem{corollary}[theorem]{Corollary}
\newtheorem*{corollaryA}{Corollary}
\newtheorem{lemma}[theorem]{Lemma}
\newtheorem{sublemma}[theorem]{Sublemma}
\theoremstyle{definition}
\newtheorem{definition}[theorem]{Definition}
\newtheorem*{definitionA}{Definition}
\newtheorem*{question}{Question}
\newtheorem{example}[theorem]{Example}
\newtheorem{remark}[theorem]{Remark}
\def\to{\mathchoice{\longrightarrow}{\rightarrow}{\rightarrow}{\rightarrow}}
\newcommand{\shortxra}[2][]{\ext@arrow 0359\rightarrowfill@{#1}{#2}}
\def\longrightarrowfill@{\arrowfill@\relbar\relbar\longrightarrow}
\newcommand{\longxra}[2][]{\ext@arrow 0359\longrightarrowfill@{#1}{#2}}
\numberwithin{equation}{section}
\begin{document}

\title[Non-displaceable toric fibers via tropicalizations]
{Non-displaceable toric fibers on compact toric manifolds via tropicalizations}

\author{Yoosik Kim, Jaeho Lee}

\address{Department of Mathematics\\
University of Wisconsin at Madison\\
WI \\
USA}

\email{ykim@math.wisc.edu, jlee@math.wisc.edu}


\begin{abstract}
We give a combinatorial way to locate non-displaceable Lagrangian toric fibers on compact toric manifolds. By taking the intersection of certain tropicalizations coming from combinatorial data of a moment polytope, we locate all strongly bulk-balanced fibers introduced in \cite{FOOOSurv}. As an application, we show that every bulk-balanced fiber defined in \cite{FOOOToric2} is strongly bulk-balanced. Thus, the method indeed detects the positions of all non-displaceable fibers that can be detected by Lagrangian Floer theory developed in \cite{FOOOToric1} and \cite{FOOOToric2}.
\end{abstract}

\maketitle

\section{Introduction} \label{section:introduction}

Finding rigid submanifolds on a symplectic manifold has been one of fundamental questions in symplectic topology. Among many notable symplectic manifolds, symplectic toric manifolds have attracted special attention of symplectic topologists because they have provided interesting examples but are nonetheless accessible. The rigidity problem that we are concerned with is which Lagrangian toric fibers are non-displaceable by a Hamiltonian diffeomorphism on a compact symplectic toric manifold. There are several approaches to this problem. One approach taken by Cho-Oh \cite{ChoOh}, Fukaya-Oh-Ohta-Ono \cite{FOOOToric1}, \cite{FOOOToric2} and Woodward \cite{W} for instance is searching which toric fibers have a non-trivial cohomology arising from certain Lagrangian Floer theories. Another approach developed by Entov-Polterovich \cite{EP} is using a quasi-state to show the existence of non-displaceable toric fibers. On the other hand, McDuff \cite{Mc} and Abreu-Borman-McDuff \cite{ABM} introduced the method of probes to find displaceable toric fibers. 

In the firstly mentioned approach in \cite{FOOOToric1}, with the aid of toric structure, the potential function restricted to the 1-cochains of fibers, which is so called the Landau-Ginzburg superpotential in the physics literature, can be expressed as a Laurent power series in terms of variables from a basis of the dual lattice. In general, it consists of two parts: the leading order potential function (also called the Hori-Vafa potential) written from the equations of the supporting hyperplanes of a moment polytope, and the correction terms coming from the contribution of holomorphic spheres. The potential function plays a crucial role in Lagrangian Floer theory on toric manifolds because the differential of the Floer complex is determined by the partial derivatives of the potential function so that detecting a position with a non-vanishing Floer cohomology is equivalent to finding a position admitting a critical point of the potential function. 

In \cite{FOOOToric2}, a bulk deformation by components of the toric divisor was used to deform the potential function (more essentially the underlying $A_\infty$-algebra), which allows us to conclude non-displaceability of toric fibers once the \emph{leading term equation} ~\eqref{leadingterm}, a part of the leading order potential function, has a solution on $(\C^*)^n$. Moreover, using bulk-deformations with slightly extended coefficients, each coefficient in the leading term equation can be independently controlled, which allows us to have some flexibility when solving a system of equations. Such a deformed leading term equation is called a \emph{generalized leading term equation} ~\eqref{genleadingterm}. In this sense, our problem is reduced to solving a certain system of equations.

A relevant discussion on solvability of a system of equations in tropical geometry can be found in the work of Osserman-Payne \cite{OP}. It includes the intersection of \emph{tropicalizations} (Definition~\ref{tropicalizationofequation}) lifts to a solution of the system over a valued field whenever they intersect \emph{properly} (See Definition~\ref{intersectproperly}), and it generalizes the lifting results of Bogart-Jensen-Speyer-Sturmfels-Thomas \cite{BJS+} dealing with the case where the tropicalizations intersect transversally. The result can be applied to some extent to detect non-displaceable toric fibers when tropicalizations intersect properly (See Corollary~\ref{Tropicaliftingbalanced} and Example~\ref{Onepointblowup1}). 

In general, however, the intersection does not lift to a solution if the tropicalizations of a system intersect improperly. Thus, in the improperly intersecting case, the toric fibers over the intersection are not necessarily non-displaceable. Nevertheless, we might have hope that the intersection lifts to non-displaceable fibers because of the flexibility from bulk-deformations. As in Figure~\ref{TwopointblowupFig0}, the tropicalizations of the components of the gradient of the potential function in the two-point blowup of $\C\P^2$ appeared in \cite{FOOOToric2} exactly intersect at the positions on which the fibers are non-displaceable even though the tropicalizations intersect improperly. 

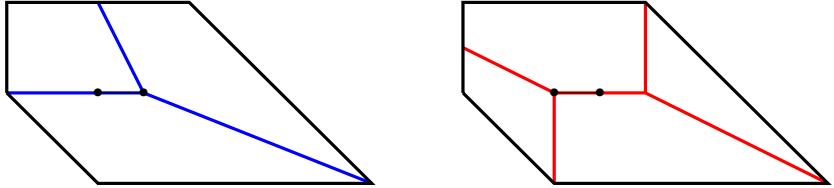
\begin{figure}[h]
\centering
\begin{tikzpicture}
 \def\size{1.2}
 \def\trans{6}

 \coordinate (A) at (0, \size);
 \coordinate (B) at (\size, 0);
 \coordinate (C) at (4 * \size, 0);
 \coordinate (D) at (2 * \size, 2 *\size);
 \coordinate (E) at (0, 2 * \size);
 \coordinate (F) at (1.5 * \size , \size);
 \coordinate (G) at (\size, \size);
 \coordinate (H) at (2 * \size, \size);
 \coordinate (I) at (0, 1.5 * \size);
 \coordinate (J) at (\size, 2 * \size);  
 \coordinate (K) at (0, \size);
 \coordinate (L) at (4 * \size, 0);
 \coordinate (M) at (3 * \size, \size);
 \coordinate (N) at (1/2 * \size, 1/2 * \size);

 \draw [very thick, color=red] (G) -- (H) ;
 \draw [very thick, color=red] (H) -- (C) ;
 \draw [very thick, color=red] (H) -- (D) ;
 \draw [very thick, color=red] (G) -- (B) ;
 \draw [very thick, color=red] (G) -- (I) ;
 \draw [thin, color=black] (G) -- (F) ;
 \node [black] at (\size, \size) {\tiny$\bullet$}; 
 \node [black] at (1.5* \size, \size) {\tiny$\bullet$}; 
 \draw [very thick] (A) -- (B) -- (C) -- (D)-- (E)-- (A);

 \coordinate (A) at (0 -\trans, \size);
 \coordinate (B) at (\size - \trans, 0);
 \coordinate (C) at (4 * \size -\trans, 0);
 \coordinate (D) at (2 * \size - \trans, 2 *\size);
 \coordinate (E) at (0- \trans, 2 * \size);
 \coordinate (F) at (1.5 * \size -\trans, \size);
 \coordinate (G) at (\size -\trans, \size);
 \coordinate (H) at (2 * \size - \trans, \size);
 \coordinate (I) at (0 -\trans, 1.5 * \size);
 \coordinate (J) at (\size -\trans, 2 * \size);  
 \coordinate (K) at (0 -\trans, \size);
 \coordinate (L) at (4 * \size-\trans, 0);
 \coordinate (M) at (3 * \size-\trans, \size);
 \coordinate (N) at (1/2 * \size-\trans, 1/2 * \size);

 \draw [very thick, color=blue] (F) -- (J) ;
 \draw [very thick, color=blue] (F) -- (K) ;
 \draw [very thick, color=blue] (F) -- (L) ;
 \draw [thin, color=black] (G) -- (F) ;
 \node [black] at (\size-\trans, \size) {\tiny$\bullet$}; 
 \node [black] at (1.5* \size-\trans, \size) {\tiny$\bullet$}; 
 \draw [very thick] (A) -- (B) -- (C) -- (D)-- (E)-- (A);

\end{tikzpicture}
\caption{The intersection of tropicalizations in the two-point blowup of $\C\P^2$.}\label{TwopointblowupFig0}
\end{figure}

It turns out however the tropicalizations coming from the gradient of the potential function is insufficient to filter out the positions of displaceable fibers (See Example~\ref{Twopointblowup1}). It is essentially because the tropicalizations generally fails to encode information of all components in the leading term equation. To get rid of irrelevant fibers or to encode enough information, we consider the tropicalization of the logarithmic derivative of the leading order potential function with respect to the direction of a lattice point. In a moment polytope, this tropicalization is combinatorially realized as the tropicalization relative to a designated lattice direction as follows. 

\begin{definitionA}[Definition~\ref{tropicalizationexcepting}]\label{tropicalizationexcepting}
Let $P \subset \R^n$ be the polytope whose boundary is given by the linear equations $\{ l_j (\u):= \langle \u, \v_j \rangle + \lambda_j : 1 \leq j \leq m \}$ satisfying the requirements in \eqref{descriptionofpolytope} where $\u = (u_1, \cdots, u_n)$ is the standard coordinate system on $\R^n$. The \emph{tropicalization of} $P$ \emph{relative to} $\m \in \Z^n$ is denoted by $\Trop \left(P \, , \m \right)$ and is defined to be the non-differentiable locus of the piecewise-linear function $\trop^{P, \m}$ given by 
$$
\trop^{P, \m} : \R^n \to \R, \quad \u \mapsto \min \left\{ l_j (\u) : \langle \m, \v_j \rangle \neq 0 \right\}.
$$
\end{definitionA}

To present the main Theorem effectively, we introduce one more definition. 
\begin{definitionA}
Let $P \subset \R^n$ be the polytope given by the intersection of the half spaces $\{ \langle \u, \v_j \rangle + \lambda_j \geq 0 : 1 \leq j \leq m \}$ satisfying the requirements in \eqref{descriptionofpolytope}. A subspace of the vector space $\R^n$ is called a \emph{primary subspace for} $P$ if it is a subspace in $\R^n$ of codimension one generated by facet normal vectors $\v_j$'s of $P$. 
\end{definitionA}

We can now state the main result of this paper. 

\begin{thmx}[Theorem~\ref{maintheroem1}]\label{THEOREMA}
Let $X$ be the compact symplectic toric manifold determined by a moment polytope $P$. For a point $\u$ in the interior of $P$, the followings are equivalent:
\begin{enumerate}
\item The toric fiber over the point $\u$ is strongly bulk-balanced. Namely, a generalized leading term equation admits a solution on $(\C^*)^n$ (See Definition~\ref{stronglybulkbalanced}).
\item The point $\u$ is in the intersection of tropicalizations $\Trop(P, \m)$ relative to $\m$ over all lattice points $\m$. 
\item The point $\u$ is in the intersection of tropicalizations $\Trop(P, \m)$ relative to $\m$ over all primitive lattice points $\m$ orthogonal to a primary subspace for $P$. 
\end{enumerate}
\end{thmx}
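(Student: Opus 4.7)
\emph{Proof proposal.} The plan is to close the cycle $(1)\Rightarrow(2)\Rightarrow(3)\Rightarrow(1)$. The middle implication is immediate since condition $(3)$ demands membership in a subfamily of the tropicalizations appearing in $(2)$; the substance lies in the other two implications.

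For $(1)\Rightarrow(2)$ I would unwind the generalized leading term equation. Its $\m$-component is the logarithmic derivative along $\m$ of the bulk-deformed leading order potential: each facet normal $\v_j$ contributes a monomial of $T$-valuation $l_j(\u)$, and the differentiation pulls out the factor $\langle \m,\v_j\rangle$, so the equation is supported precisely on those indices $j$ with $\langle \m,\v_j\rangle\neq 0$. For any $\y\in(\C^*)^n$, the basic tropical principle forces a vanishing Laurent sum of monomials with nonzero coefficients to have its minimum $T$-valuation attained at least twice. That minimum is exactly $\trop^{P,\m}(\u)=\min\{l_j(\u):\langle \m,\v_j\rangle\neq 0\}$, and the double attainment is the non-differentiability of $\trop^{P,\m}$ at $\u$, i.e.\ $\u\in\Trop(P,\m)$. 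Running this over all $\m\in\Z^n$ gives $(2)$.

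The heart of the argument is $(3)\Rightarrow(1)$, which I plan to prove by an explicit layered construction guided by the combinatorics of primary subspaces. Group the facet normals $\v_j$ by their $T$-valuation $l_j(\u)$, and within each layer by which primary subspaces they span. The hypothesis asserts that for every codimension-one $\R$-subspace $W$ generated by facet normals and every primitive $\m\perp W$, the minimum of $l_j(\u)$ over $\v_j\notin W$ is attained at least twice. I would use this to build $\y\in(\C^*)^n$ inductively, peeling off the $T$-adic filtration: the tropical condition orthogonal to each primary subspace $W$ produces at least two balancing terms of equal leading $T$-order outside $W$, and the freedom afforded by bulk-deformation lets the coefficients of those terms be tuned independently so that the cancellation actually occurs. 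Because primary subspaces exhaust the $\R$-linear dependencies among the facet normals, the only equations that need to be solved at a given $T$-level are those dictated by primary directions, and these can be arranged simultaneously.

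The main obstacle will be exactly this $(3)\Rightarrow(1)$ step, specifically upgrading the pointwise tropical conditions into a \emph{joint} solution rather than a list of separately solvable relations. Controlling the compatibility of bulk-deformed coefficients across different primary subspaces, and ruling out accidental collapse into a coordinate hyperplane, is the delicate part; I expect to handle it by induction on $\dim P$ or on the number of $T$-valuation layers, reducing each inductive step to a proper-intersection configuration in the sense of Corollary~\ref{Tropicaliftingbalanced} and then invoking the associated lifting.
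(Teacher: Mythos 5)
Your overall strategy---closing the cycle $(1)\Rightarrow(2)\Rightarrow(3)\Rightarrow(1)$---is reasonable, and $(2)\Rightarrow(3)$ is indeed trivial. The paper instead proves $(1)\Rightarrow(2)$, the easy direction $(2)\Rightarrow(3)$, $(3)\Rightarrow(2)$ (this is Lemma~\ref{2ndlemmafortheoremA}, a short direct argument that does not appear in your plan), and then $(2)\Rightarrow(1)$, so the two routes are not identical.

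Your $(1)\Rightarrow(2)$ sketch is correct and is arguably cleaner than the paper's. What you describe amounts to: use Theorem~\ref{FOOOToric2Main}(2) to get a critical point $\y\in(\Lambda_U)^n$ of some $\frak{PO}^\u_{\frak{b}'}$; then for each $\m$ the logarithmic derivative along $\m$ vanishes, the surviving low-order terms are $\sum_j c_j'\langle\m,\v_j\rangle\,\y^{\v_j}T^{l_j(\u)}$ with $c_j'\in\C^*$, the higher-order corrections have strictly larger valuation by Theorem~\ref{differenceofpotentialfunction}, so the minimum valuation $\min\{l_j(\u):\langle\m,\v_j\rangle\neq 0\}$ must be attained at least twice, i.e.\ $\u\in\Trop(P,\m)$. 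The paper's Lemma~\ref{bulkimpliestropinter} reaches the same conclusion by a contradiction argument that constructs an adapted coordinate system making some $y_{\nu+1,1}\,\frac{\pa(\frak{PO}^\u_\frak{b})_{\nu+1}}{\pa y_{\nu+1,1}}$ a single monomial; your route avoids this bespoke basis choice. You should, however, be explicit that the ``$\m$-component'' is really a consequence of passing to the full critical-point equation via Theorem~\ref{FOOOToric2Main}; the generalized leading term equation itself is indexed by $(r,s)$, not by $\m$, and only after the base change in Lemma~\ref{solutionandbasechange} do you get the vanishing of arbitrary directional logarithmic derivatives.

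The real gap is $(3)\Rightarrow(1)$, which you yourself flag as the hard step and then leave as a plan. The specific plan you propose---``reducing each inductive step to a proper-intersection configuration in the sense of Corollary~\ref{Tropicaliftingbalanced} and then invoking the associated lifting''---does not close the gap. Corollary~\ref{Tropicaliftingbalanced} applies only when $\u$ is an \emph{isolated} point of the relevant tropical intersection; the entire reason bulk-deformations are introduced is to handle the complementary, improper case (e.g.\ Example~\ref{Twopointblowup0}, where the intersection locus is a segment, not a point). There is no reduction of the improper case to the proper one hiding inside the induction. The paper's argument for $(2)\Rightarrow(1)$ is instead an elementary direct construction: first choose the basis $\{e^*_{r,s}\}$ so that $\v_{(l,s)}\in\N\,e^*_{l,s}$ for $1\le s\le d_l$, which puts each level-$l$ piece in the shape
\[
\bigl(\frak{PO}^\u_\frak{b}\bigr)_l = c_{(l,1)}\,y_{l,1}^{n_{l,1}} + \cdots + c_{(l,d_l)}\,y_{l,d_l}^{n_{l,d_l}} + \frak{P}_l(y_{1,1},\dots,y_{l,d_l});
\]
second, use the tropical hypothesis to show that every ``tail'' $y_{r,s}\,\frac{\pa\frak{P}_r}{\pa y_{r,s}}$ is not identically zero (the contradiction step here is exactly where an $\m$ orthogonal to a primary subspace is produced, so your $(3)$ really does suffice); third, use the Baire category theorem to choose $\frak{y}_{r,s}\in\C^*$ making all tails simultaneously nonvanishing; fourth, pick the bulk constants $c_{(r,s)}$ to cancel each tail. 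This direct tuning replaces any appeal to Osserman--Payne lifting. You should replace the proper-intersection reduction idea with this coordinate-plus-Baire-plus-bulk construction; without it, $(3)\Rightarrow(1)$ remains unproved.
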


As a corollary, we have 
\begin{corollaryA}[Corollary~\ref{maincorollary}]
If an interior point $\u$ of $P$ is contained in the intersection of $\, \Trop( P, \m)$ over all primitive lattice points perpendicular to a primary subspace for $P$, then the toric fiber $L(\u)$ over $\u$ is non-displaceable. 
\end{corollaryA}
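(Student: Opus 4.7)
The plan is to derive the corollary directly from Theorem~\ref{THEOREMA} together with the standard Lagrangian Floer-theoretic consequence of strong bulk-balancedness. The substantive work is entirely contained in the main theorem; once that is in hand, the corollary amounts to translating its output into the language of displacement.

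First, I invoke the implication $(3) \Rightarrow (1)$ of Theorem~\ref{THEOREMA}. By hypothesis, the interior point $\u$ lies in the intersection of $\Trop(P, \m)$ over all primitive lattice points $\m$ orthogonal to a primary subspace for $P$, so the theorem yields that $L(\u)$ is strongly bulk-balanced; equivalently, some generalized leading term equation attached to $\u$ admits a solution in $(\C^*)^n$.

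Second, I appeal to the Lagrangian Floer theory of Fukaya-Oh-Ohta-Ono \cite{FOOOToric1, FOOOToric2, FOOOSurv}: by the very construction of strong bulk-balancedness, a solution on $(\C^*)^n$ of the generalized leading term equation associated to $\u$ produces a critical point of a bulk-deformed potential function on $L(\u)$, taken over a suitably extended Novikov-type coefficient ring. This critical point yields a non-vanishing bulk-deformed Floer cohomology of $L(\u)$, which in turn obstructs Hamiltonian displacement of the fiber. Chaining the two steps gives the claimed non-displaceability.

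The only genuinely difficult step lies inside the proof of Theorem~\ref{THEOREMA}, specifically the implication $(3) \Rightarrow (1)$: one must show that the purely combinatorial condition of lying in the intersection of the tropicalizations relative to every primary-orthogonal primitive lattice direction suffices to assemble an honest solution of a generalized leading term equation on $(\C^*)^n$. For the corollary itself, however, that implication is an input, and no further work beyond citing the theorem is required.
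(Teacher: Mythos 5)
Your proposal is correct and matches the paper's proof exactly: the paper derives the corollary by combining the implication $(3)\Rightarrow(1)$ of Theorem~\ref{maintheroem1} with Corollary~\ref{FOOOToric2MainCor} (which records that a solution on $(\C^*)^n$ of a generalized leading term equation implies non-displaceability via bulk-deformed Floer cohomology). Your second step merely unpacks the content of Corollary~\ref{FOOOToric2MainCor} rather than citing it by name, but the route is identical.
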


We notice that the number of primary subspaces for $P$ is bounded by ${m \choose n-1} = \frac{m!}{(n-1)! \cdot (m-n+1)!}$ and hence finite. Moreover, since $\Trop(P, \m) = \Trop(P, -\m)$, it is sufficient to take one primitive vector perpendicular to each $(n-1)$-dimensional subspace. Thus, by considering at most ${m \choose n-1}$ tropicalizations, we can detect the positions of all strongly bulk-balanced fibers.

In \cite{FOOOToric2}, in order to include more non-displaceable toric fibers by Floer theory that they developed, Fukaya-Oh-Ohta-Ono introduced a notion of \emph{bulk-balanced} fibers (See Definition~\ref{bulkbalanced}). Roughly speaking, the notion relies on the fact that the limit of non-displaceable fibers is also non-displaceble. Since tropicalizations incorporate a limit process, it is reasonable to expect that the intersection of the tropicalizations in Theorem~\ref{THEOREMA} should also detect bulk-balanced fibers. As an application of Theorem~\ref{THEOREMA}, we derive

\begin{thmx}[Theorem~\ref{maintheorem2}]\label{THEOREMB}
Let $X$ be the compact symplectic toric manifold determined by a moment polytope $P$. If the toric fiber over a point $\u$ in the interior of $P$ is bulk-balanced, then it is strongly bulk-balanced. 
\end{thmx}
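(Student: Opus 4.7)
The plan is to combine the equivalence (1)$\Leftrightarrow$(3) of Theorem~\ref{THEOREMA} with a closedness argument for the tropicalization intersection. By that equivalence applied to $P$, it suffices to show that whenever $L(\u)$ is bulk-balanced at $\u$, the point $\u$ lies in $\bigcap_{\m} \Trop(P, \m)$, where the intersection runs over primitive lattice vectors $\m$ orthogonal to some primary subspace for $P$.

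First I unpack Definition~\ref{bulkbalanced}. A bulk-balanced fiber at $\u$ is produced by a sequence of moment polytopes $P_k$ sharing the same inward facet normals $\v_j$ as $P$ (only the translation parameters $\lambda_j^{(k)}$ vary, converging to $\lambda_j$) together with points $\u_k$ in the interior of $P_k$ converging to $\u$, such that for each $k$ a suitable generalized leading term equation for $(P_k, \u_k)$ admits a solution on $(\C^*)^n$. In particular each $\u_k$ is strongly bulk-balanced for $P_k$, so Theorem~\ref{THEOREMA} applied to $P_k$ yields $\u_k \in \bigcap_{\m} \Trop(P_k, \m)$. Since the primary subspaces are determined by the $\v_j$'s alone, the relevant family of $\m$'s is the same for $P$ and for every $P_k$.

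Next I run the limit argument. Fix a relevant primitive $\m$ and consider
\[
\trop^{P_k, \m}(\u) \;=\; \min \bigl\{ \langle \u, \v_j \rangle + \lambda_j^{(k)} : \langle \m, \v_j \rangle \neq 0 \bigr\},
\]
a minimum over a fixed finite index set. Membership $\u_k \in \Trop(P_k, \m)$ means that two distinct indices realize this minimum at $\u_k$. Since there are only finitely many index pairs, after passing to a subsequence a single pair $(j_1, j_2)$ witnesses the non-differentiability for every $k$, giving $l_{j_1}^{(k)}(\u_k) = l_{j_2}^{(k)}(\u_k) \le l_j^{(k)}(\u_k)$ for all admissible $j$. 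Letting $k \to \infty$ preserves these (in)equalities, because each affine form depends continuously on $\lambda_j^{(k)}$, so in the limit $l_{j_1}(\u) = l_{j_2}(\u) = \trop^{P, \m}(\u)$ and hence $\u \in \Trop(P, \m)$. A diagonal argument across the finitely many relevant $\m$'s completes the verification of (3), and Theorem~\ref{THEOREMA} then upgrades $\u$ to a strongly bulk-balanced point.

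The main obstacle is a careful matching at the definitional boundary: I must verify that the approximating data in the Fukaya--Oh--Ohta--Ono notion of bulk-balanced can in fact be realized by varying only the translations $\lambda_j$ of a fixed fan, so that the generalized leading term equation machinery underlying Theorem~\ref{THEOREMA} applies uniformly to each $(P_k, \u_k)$, and that the convergence of bulk classes and Floer-theoretic solutions can be repackaged as convergence of points $\u_k \to \u$. Once this translation is secured, the remainder is a soft compactness argument powered by the finiteness of the facet count and hence of the index pairs $(j_1, j_2)$ and primary subspaces.
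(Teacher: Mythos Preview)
Your argument is correct and in fact more economical than the paper's. Both proofs share the same skeleton: apply Theorem~\ref{THEOREMA} to each approximating pair $(P^{(i)}, \u^{(i)})$ to obtain $\u^{(i)} \in \Trop(P^{(i)}, \m)$, then pass to the limit to conclude $\u \in \Trop(P, \m)$. The difference lies entirely in the limit step. The paper establishes two preparatory lemmas---one showing that Hausdorff convergence $P^{(i)} \to P$ forces $\lambda_j^{(i)} \to \lambda_j$, and a longer one showing that $\Trop(Q^{(i)}) \to \Trop(Q)$ in Hausdorff distance when a single facet is translated---and then chains these via the triangle inequality. You bypass this Hausdorff machinery: since membership in $\Trop(P_k, \m)$ is witnessed by a pair of indices from the fixed finite set $\{j : \langle \m, \v_j \rangle \neq 0\}$, pigeonhole extracts a subsequence with a common witnessing pair, and the affine inequalities $l_{j_1}^{(k)}(\u_k) = l_{j_2}^{(k)}(\u_k) \le l_j^{(k)}(\u_k)$ pass to the limit by continuity. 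This is shorter and avoids the somewhat delicate Sublemmas in the paper's Lemma~\ref{Hasudorffdistanceoftropicalization}.

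Two minor remarks. First, the ``diagonal argument across the finitely many relevant $\m$'s'' is unnecessary: you are proving $\u \in \Trop(P, \m)$ for each $\m$ separately, and the limit point $\u$ is fixed, so you may pass to a different subsequence for each $\m$ without harm. Second, the ``main obstacle'' you flag is already absorbed by the paper's Definition~\ref{bulkbalanced}: condition~(2) there fixes the normal fan (hence the facet normals $\v_j$), and condition~(3) asserts that the \emph{leading term equation} at $\u^{(i)}$ has a solution, which is the $\frak{b}=0$ case of the generalized leading term equation and therefore already gives strong bulk-balancedness of $L(\u^{(i)})$. The only genuine ingredient you use without proof is that Hausdorff convergence of the polytopes forces $\lambda_j^{(k)} \to \lambda_j$; this is the paper's Lemma~\ref{convergenceoflambdaj} and is elementary.
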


Theorem~\ref{THEOREMB} indeed implies that the intersection of the tropicalizations in Theorem~\ref{THEOREMA} is able to locate all non-displaceable toric fibers that can be detected by current technology of Lagrangian Floer theory (as far as the authors know) in a compact symplectic toric manifold. Along this line, one open question asked in \cite{FOOOToric2} can be rephrased as follows: 

\begin{question}
If $\u \in \text{Int}(P)$ is not in the intersection of the tropicalizations in Theorem~\ref{THEOREMA}, is the toric fiber over $\u$ is displaceable?\footnote{In some cases, extended probes in \cite{ABM} give a complete answer for this question. However, there exists a symplectic toric surface containing a toric fiber that is not on the intersection of tropicalizations and is not displaceable by extended probes (See \cite{ABM}).} 
\end{question}

Because our criterion uses results from \cite{FOOOToric1} and \cite{FOOOToric2}, the authors should admit that it does not detect any extra non-displaceable fibers other than those detected by methods therein. Nevertheless, the criterion has some advantages: Firstly, the method is global so that systems of equations depending on the positions are not necessarily considered. Also, it fully includes bulk-balanced fibers so we do not need to consider any kinds of perturbation of polytopes causing a sequence of non-displaceable fibers. Finally, it converts a problem solving systems of equations into a problem of linear equations via tropical intersection theory, which is expected to be much simpler. 

The rest of the paper is organized as follows. In Section~\ref{LFT on toric}, we recall derivation and roles of the potential function on compact toric manifolds following \cite{FOOOToric1}. Section~\ref{Tropicalizations} discusses when and how lifting results in \cite{OP} deduce non-displaceability of toric fibers. Section~\ref{examples} presents specific examples illustrating how the results in Section~\ref{Tropicalizations} and Theorem~\ref{THEOREMA} are applied to detect non-displaceable toric fibers. In Section~\ref{section:Bulk}, we review bulk-deformations and their effects on the potential function in \cite{FOOOToric2}. Finally, proofs of Theorem~\ref{THEOREMA} and Theorem~\ref{THEOREMB} are given in Section~\ref{ProofA} and Section~\ref{ProofB} respectively. 

\subsection*{Acknowledgements}
We would like to thank our advisor Yong-Geun Oh for his continuous support and invaluable discussions. This project was carried out during our stay at Institute for Basic Science - Center for Geometry and Physics (IBS-CGP).  We would like to thank IBS-CGP for the excellent environment and support. This work was partially supported by IBS project (IBS-R003-D1) during our visirt of IBS-CGP.
\bigskip

\section{Lagrangian Floer theory on toric manifolds}\label{LFT on toric}

In this section, we review Lagrangian Floer theory on compact toric symplectic manifolds developed by Fukaya-Oh-Ohta-Ono, focusing on derivation and roles of the potential function. For complete details, the reader is referred to \cite{FOOOToric1}. 

Throughout this section, let $X$ denote the $2n$-dimensional compact symplectic manifold determined by a moment polytope $P$, the image of a moment map $\pi : X \to M_\R$. A choice of a basis for $M$ gives us the identifications $M_\R \simeq \R^n$ and $N_\R \simeq \R^n$. Under the identifications, the moment polytope $P$ can be expressed as an intersection of $m$ half-spaces in $\R^n$:
\begin{equation}\label{descriptionofpolytope}
\displaystyle P = \bigcap_{j=1}^m \left\{ \u \in \R^n : \langle \u, \v_j \rangle \geq \lambda_j \right\}.
\end{equation}
To achieve the uniqueness of this expression, we additionally require the following conditions on the expression \eqref{descriptionofpolytope}:
\begin{enumerate}
\item The intersection is assumed to be not redundant so that each of half-spaces must share the boundary with a different facet of the polytope $P$. 
\item An inward vector $\v_j \in N$ is supposed to be primitive, which means that the nonzero components of $\v_j$ have the greatest common divisor $1$. 
\end{enumerate}
We denote by $L(\u)$ the toric fiber over $\u \in \text{Int}(P)$ and recall that $L(\u)$ is Lagrangian with respect to the symplectic form $\omega_P$ on $X$ given by $P$.

\subsection{Novikov rings}

We introduce rings that we are working over. The Novikov field $\Lambda$ over $\C$ is the field
\begin{equation}\label{Novikovfield}
\Lambda := \left\{ \sum_{j=1}^{\infty} a_{j} T^{\lambda_j}  \, \bigg{|} \, a_j \in \C, \lambda_j \in \R, \lim_{j \rightarrow \infty} \lambda_j = \infty \right\}
\end{equation}
together with the valuation $\frak{v}_T : \Lambda \backslash \{ 0 \} \to \R$ given by
$$
\frak{v}_T \left(\sum_{j=1}^{\infty} a_{j} T^{\lambda_j}  \right) := \inf_{j} \, \{ \lambda_j : a_j \neq 0 \}.
$$
Here, $T$ is a formal parameter storing data of symplectic areas. This field is known to be algebraically closed (Lemma A.1 in \cite{FOOOToric1}). We will also consider subrings of $\Lambda$ given as follows:
\begin{align*}
\Lambda_0 &:= \frak{v}_T^{-1}[0, \infty) \cup \{ 0 \} = \left\{ \sum_{i=1}^{\infty} a_{i} T^{\lambda_i} \in \Lambda  \, \bigg{|} \, a_i \in \C, \lambda_i \in \R, \lambda_i \geq 0, \lim_{i \rightarrow \infty} \lambda_i = \infty \right\} \\
\Lambda_+ &:= \frak{v}_T^{-1}(0, \infty) \cup \{ 0 \} = \left\{ \sum_{i=1}^{\infty} a_{i} T^{\lambda_i} \in \Lambda  \, \bigg{|} \, a_i \in \C, \lambda_i \in \R, \lambda_i > 0, \lim_{i \rightarrow \infty} \lambda_i = \infty \right\}. \,
\end{align*}
As analogues of the unitary group $U(1)$ and the algebraic torus $\C^*$, we respectively put
\begin{align*}
\Lambda_U &:= \Lambda_0 \backslash \Lambda_+ = \left\{ \sum_{i=1}^{\infty} a_{i} T^{\lambda_i} \in \Lambda_0  \, \bigg{|} \, \frak{v}_T \left( \sum_{i=1}^{\infty} a_{i} T^{\lambda_i} \right) = 0 \right\} \\
\Lambda^* &:= \Lambda \backslash \{0\}. 
\end{align*}

\begin{remark}\label{coventiongradingparameter}
Not using a grading parameter in ~\eqref{Novikovfield}, degrees of structure maps appearing in $A_\infty$-algebras are not well-defined unless considering $\Z / 2$-degree. For this reason, we tacitly use $\Z /2$-grading on the modules appeared in $A_\infty$-algebras in later sections.
\end{remark}

\subsection{$A_\infty$-algebras on toric fibers}\label{Ainftyalgebrasontoricfibers}

Fukaya-Oh-Ohta-Ono \cite{FOOOToric1} constructed an $A_\infty$-algebra on the de-Rham cohomology $H^\bullet(L(\u); \Lambda_0)$ of a toric fiber $L(\u)$ which governs an obstruction of Floer cohomologies on the fiber. The following is a summarization of properties of the constructed $A_\infty$-algebra. 

\begin{theorem} [Section 11 in \cite{FOOOToric1}] \label{Ainftyrelationsum}
Let $X$ be a compact toric symplectic manifold and $L(\u)$ be a toric fiber of $X$. There exists a sequence of maps
\begin{equation}\label{m^k}
\frak{m}^k : H^\bullet(L(\u); \Lambda_0)^{\otimes k} \to H^\bullet(L(\u); \Lambda_0), \quad k \geq 0
\end{equation}
of degree $(2-k)$ mod $2$ satisfying the following properties. 
\begin{enumerate}
\item The pair $\left(H^\bullet (L(\u); \Lambda_0), \{ \frak{m}^k : k \geq 0 \} \right)$ forms a curved $A_\infty$-algebra. Namely, the maps satisfy the $A_\infty$-relation: for every $k \geq 0$ and $h_i \in H^\bullet (L(\u); \Lambda_0)$,
$$
\sum_{k_1, \, k_2} \sum_{i} (-1)^\bigstar \, \frak{m}^{k_2} (h_1, \cdots, h_{i}, \frak{m}^{k_1}(h_{i+1}, \cdots, h_{i+k_1}), h_{i+k_1+1}, \cdots, h_k) = 0
$$ 
where the summation is over all pairs $(k_1, k_2)$ of non-negative integers satisfying $k_1 + k_2 = k + 1$ and $0 \leq i \leq k_2 - 1$, and $\bigstar := | h_1 | + \cdots + | h_i | - i$. Here, $| h_i |$ means the degree of $h_i$.
\item The Poincar\'{e} dual $\textup{PD}[L(\u)]$ is the strict unit in the $A_\infty$-algebra. That is, it obeys
\begin{align*}
&\frak{m}^2 (\textup{PD}[L(\u)], h) =  (-1)^{| h |}\frak{m}^2 (h, \textup{PD}[L(\u)]) = h \quad \text{for all } h\\
&\frak{m}^{k+1} (h_1, \cdots, \textup{PD}[L(\u)], \cdots, h_k) = 0 \quad \text{for any } k \neq 1. 
\end{align*}
\item The $A_\infty$-algebra is canonical (or minimal). It means that $\frak{m}^1_{0} = 0$ \eqref{definitionofmhbeta}.
\item The $A_\infty$-algebra is $G$-gapped for some additive discrete submonoid $G = \{ \lambda_i: \lambda_0 = 0, \{ \lambda_i \}_{i \geq 0} \text{ is increasing, and } \lambda_i \to \infty \}$ of $\R$. That is, $\frak{m}^k$ can be expressed of the form 
$$
\frak{m}^k = \sum_{i=0}^\infty \frak{m}^{k, i} \, T^{\lambda_i}
$$
where $\lambda_i \in G$ and each $\frak{m}^{k, i}: H^\bullet(L(\u); \C)^{\otimes k} \to H^\bullet(L(\u); \C)$ is a $\C$-linear map. 
\end{enumerate}
\end{theorem}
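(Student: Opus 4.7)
The plan is to follow the construction of Fukaya--Oh--Ohta--Ono, for which toric fibers are unobstructed with explicit $T^n$-invariant Kuranishi data. First I would set up, for each relative homotopy class $\beta \in \pi_2(X, L(\u))$, the moduli space $\mathcal{M}_{k+1}(L(\u), \beta)$ of stable maps from a genus zero bordered Riemann surface with $k+1$ boundary marked points representing $\beta$, together with the evaluation maps $\ev = (\ev_0, \ev_1, \ldots, \ev_k)$ at the marked points. Exploiting the $T^n$-action, one compactifies these spaces and endows them with a $T^n$-equivariant Kuranishi structure whose virtual dimension can be computed from the Maslov index $\mu(\beta) = 2\sum_j \langle \partial\beta, \d_j P\rangle$. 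The operator $\frak{m}^{k,\beta}$ is then defined on $H^\bullet(L(\u); \C)^{\otimes k}$ by the virtual fiber product formula $\frak{m}^{k, \beta}(h_1, \ldots, h_k) = (\ev_0)_* \big(\ev_1^* h_1 \wedge \cdots \wedge \ev_k^* h_k\big)$, and the full operator is assembled as $\frak{m}^k = \sum_\beta \frak{m}^{k,\beta} \, T^{\omega(\beta)/2\pi}$.

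Second, the $A_\infty$-relation is proved by analyzing the codimension-one boundary of $\mathcal{M}_{k+1}(L(\u), \beta)$, which decomposes as the union over splittings $\beta = \beta_1 + \beta_2$ and $k = k_1 + k_2 - 1$ of fiber products $\mathcal{M}_{k_2+1}(L(\u), \beta_2) {}_{\ev_i}\!\times_{\ev_0} \mathcal{M}_{k_1+1}(L(\u), \beta_1)$. After choosing a system of multisections compatible with this forgetful/gluing structure, Stokes' theorem on the virtual fundamental chain yields the stated $A_\infty$-relation with sign $\bigstar = |h_1|+\cdots+|h_i|-i$. To ensure the coefficients lie in $\Lambda_0$ (not just in $\Lambda$), one invokes Gromov compactness and the positivity $\omega(\beta) \geq 0$ for $\mathcal{M}_{k+1}(L(\u), \beta) \neq \emptyset$, with the zero class contributing only the classical cup product (after homological perturbation).

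Third, I would establish the listed structural properties. The strict unit property for $\textup{PD}[L(\u)]$ follows from the forgetful map $\mathcal{M}_{k+1}(L(\u), \beta) \to \mathcal{M}_k(L(\u), \beta)$ which drops the marked point carrying $\textup{PD}[L(\u)]$: the fiber is generically a circle, so for $k \neq 1$ the corresponding virtual chain has dimension too low and the contribution vanishes, while for $k=1$ one recovers the classical unit axiom. Canonicality $\frak{m}^1_0 = 0$ is obtained by running homological perturbation to transfer the de Rham-model $A_\infty$-structure to cohomology, killing the classical differential. $G$-gappedness is automatic because the image of the energy functional $\omega : \pi_2(X, L(\u)) \to \R$ is a finitely generated discrete submonoid (it is contained in the submonoid generated by the finitely many symplectic areas of the torus-invariant basic disks classified in \cite{FOOOToric1}).

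The principal obstacle is to arrange all the virtual perturbation data (multisections or Kuranishi structures) coherently across every $\beta$ and every boundary splitting, while simultaneously preserving $T^n$-equivariance so that the resulting $\frak{m}^k$ operations still take values in the cohomology of $L(\u)$ and so that the unit and gapped properties survive; this coherence is exactly the technical heart of Section~11 of \cite{FOOOToric1}, and in my sketch I would invoke the $T^n$-equivariant Kuranishi framework developed there rather than redo those constructions.
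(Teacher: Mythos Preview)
Your outline is essentially correct and tracks the same construction the paper sketches (which in turn is just a summary of \cite{FOOOToric1}, Section~11): moduli of bordered stable maps, $T^n$-equivariant Kuranishi/perturbation data, pull-push along evaluation maps, and boundary decomposition giving the $A_\infty$-relation.

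One point where your sketch diverges slightly from the paper's outline is canonicality. You obtain $\frak{m}^1_0 = 0$ by invoking homological perturbation to transfer from the de Rham model to cohomology. The paper (following \cite{FOOOToric1}) instead uses the toric-specific shortcut: because the perturbation is chosen $T^n$-equivariantly and $\ev_0$ is $T^n$-equivariant, the outputs of $\frak{m}^k_\beta$ on $T^n$-invariant forms are again $T^n$-invariant; and on the torus $L(\u)$, $T^n$-invariant forms are exactly the harmonic forms, so the operations are \emph{directly} defined on $H^\bullet(L(\u);\C)$ with no transfer step. This is not a gap in your argument---homological perturbation would also work---but it is a genuine simplification available in the toric setting, and it is what makes the de Rham model on toric fibers so clean. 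Your final paragraph does mention $T^n$-equivariance for this purpose, so you may just want to drop the separate homological-perturbation step as redundant.
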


We briefly outline the construction of $\{ \frak{m}^k : k \geq 0 \}$ in \cite{FOOOToric1}. For the torus invariant complex structure $J$ of the symplectic toric manifold $X$ from a moment polytope $P$, we consider the moduli space $\mcal{M}_{k+1}(X, J; L(\u); \beta)$ of stable maps from a bordered genus $0$ Riemann surface to $X$ with the boundary condition $L(\u)$ in the class $\beta \in \pi_2 (X, L(\u))$ together with $k+1$ boundary marked points respecting counter-clockwise orientation. By taking a spin structure on the torus $L(\u)$ and perturbing the moduli space properly, it becomes oriented and transversal.  The perturbed space is still denoted by $\mcal{M}_{k+1}(X, J; L(\u); \beta)$ by an abuse of notation. This space comes with the evaluation maps at marked points:
\begin{align*}
&\ev_+ : \mcal{M}_{k+1}(X, J; L(\u); \beta) \to L(\u)^k, \quad  \ev_+( \varphi, z_0, z_1, \cdots, z_k ) = ( \varphi (z_1), \cdots, \varphi (z_k)) \\ 
&\ev_0 : \mcal{M}_{k+1}(X, J; L(\u); \beta) \to L(\u), \quad \ev_0( \varphi, z_0, z_1, \cdots, z_k ) = \varphi (z_0).
\end{align*}
For complex-valued differential forms $h_1, \cdots, h_k$ on $L(\u)$, we define the map $\frak{m}^k_\beta$ as follows:
\begin{align}\label{definitionofmhbeta}
&\frak{m}^1_{0} (h_1) := (-1)^{n + | h | + 1} dh_1 \quad \mbox{if } (k,\beta) = (1, 0) \\
&\frak{m}^k_\beta (h_1, \cdots, h_k) := (\ev_0)_! \left(\ev_+^* (h_1 \times \cdots \times h_k )\right) \quad \mbox{otherwise}. 
\end{align}
Namely, we first pull the differential form $h_1 \times \cdots \times h_k $ along $\ev_+$ back to $\mcal{M}_{k+1}(X, J; L(\u); \beta)$ and then take the integration along the fiber under $\ev_0$. The outcome turns out to be a differential form on $L(\u)$ because the map $\ev_0$ becomes a submersion by taking a $T^n$-equivariant perturbation such that $\ev_0$ is $T^n$-equivariant. Moreover, due to this feature, the output is also $T^n$-invariant whenever taking $T^n$-invariant differential forms as an input. Since a $T^n$-invariant differential form is a harmornic differential form with respect to a choice of $T^n$-equivariant Riemannian metric on $L(\u)$ and vice versa, by identifying $H^\bullet(L(\u); \C)$ with the set of harmonic differential forms, every map $\frak{m}^k_\beta$ is reduced to the cohomology level. Extending $\frak{m}^k_\beta$ linearly to the completion of $H^\bullet(L(\u); \C) \otimes \Lambda_0$ and declaring
$$
\frak{m}^k := \sum_{\beta} \frak{m}^k_\beta \,\, T^{\omega(\beta)/2\pi},
$$
we obtain the map ~\eqref{m^k}. 

We discuss the deformation of the given $A_\infty$-algebra by a $1$-cochain $b \in H^1 (L(\u); \Lambda_0)$. It is convenient to explain it in two stages. Note that a cochain $b \in H^1 (L(\u); \Lambda_0)$ can be written as $b = b_0 + b_+$ where $b_0 \in H^1 (L(\u); \C)$ and $b_+ \in H^1 (L(\u); \Lambda_+)$. Firstly, adorning $L(\u)$ with a flat complex line bundle $\mcal{L}_{b_0}$ such that the holonomy of a closed curve $\gamma$ is $\exp \left( \int_\gamma b_0 \right)$ as in Cho \cite{Cho}, we deform $\frak{m}^k_\beta$ as follows:
$$
\frak{m}_{\beta; b_0}^k := \exp \left( \int_{\pa \beta} b_0 \right) \cdot \frak{m}_\beta^k.
$$
We then declare
$$
\frak{m}_{b_0}^k := \sum_{\beta} \frak{m}_{\beta; b_0}^k \, T^{\omega(\beta)/ 2\pi}.
$$
Secondly, following \cite{FOOO}, for $b_+ \in H^1 (L(\u); \Lambda_+)$ we define the map $\frak{m}^k_{b_+}$ by
$$
\frak{m}^{k}_{b_+} (h_1, \cdots , h_k) := \sum_{n_0, n_1, \cdots, n_k} \frak{m}^{k+n_0+ \cdots +n_k} \left( b_+^{\otimes n_0}, h_1, b_+^{\otimes n_1}, \cdots, b_+^{\otimes n_{k-1}}, h_k, b_+^{\otimes n_k} \right)
$$
where the summation is taken over all possible integers $n_0, \cdots, n_k \geq 0$. Combining these two deformations, we define the deformed map $\frak{m}^k_b$ to be
$$
\frak{m}^{k}_{b} (h_1, \cdots , h_k) := \sum_{n_0, n_1, \cdots, n_k} \frak{m}_{b_0}^{k+n_0+ \cdots +n_k} \left( b_+^{\otimes n_0}, h_1, b_+^{\otimes n_1}, \cdots, b_+^{\otimes n_{k-1}}, h_k, b_+^{\otimes n_k} \right).
$$
We then obtain the deformed $A_\infty$-algebra $\left(H^\bullet (L(\u); \Lambda_0), \{ \frak{m}^{k}_b : k \geq 0 \} \right)$ on $L(\u)$ with the strict unit $\text{PD}[L(\u)]$. 

One case where $\frak{m}^{1}_b$ becomes a differential is for a cochain $b$ to satisfy
$$
\sum_{k=0}^\infty \frak{m}^k (b^{\otimes k}) \equiv  0 \quad \text{ mod } \Lambda_+ \cdot \text{PD}[L(\u)]
$$
because from the $A_\infty$-relation and the unitality of $\text{PD}[L(\u)]$ it follows that
\begin{align*}
0 &= \frak{m}^{1}_b ( \frak{m}^{1}_b (h) ) + (-1)^{|h| - 1} \frak{m}^{2}_b (h, \frak{m}^{0}_b(1)) + \frak{m}^{2}_b (\frak{m}^{0}_b(1), h) \\
&= \frak{m}^{1}_b ( \frak{m}^{1}_b (h) ) + (-1)^{|h| - 1} \frak{m}^{2}_b \left(h, \sum_{k=0}^\infty \frak{m}^k (b^{\otimes k})\right) + \frak{m}^{2}_b \left(\sum_{k=0}^\infty \frak{m}^k (b^{\otimes k}), h\right) \\
&= \frak{m}^{1}_b ( \frak{m}^{1}_b (h) ) + (-1)^{|h| - 1} (-1)^{|h|}\lambda h + \lambda h \\
&= \frak{m}^{1}_b ( \frak{m}^{1}_b (h) ).  
\end{align*}
for some $\lambda \in \Lambda_+$. Such a cochain $b$ is called a \emph{bounding cochain}.

When $X$ is a compact toric manifold, every $1$-cochain $b$ of $L(\u)$ is turned out to be a bounding cochain (Proposition 4.3 in \cite{FOOOToric1}) and so gives rise to the map $\frak{m}^{1}_b$ satisfying $\frak{m}^{1}_b \circ \frak{m}^{1}_b = 0$. In this case, we can define the \emph{Floer cohomology deformed by a bounding cochain} $b$ as follows:
$$
HF^\bullet ((L(\u), b), (L(\u), b); \Lambda_0) := \frac{\Ker \frak{m}^{1}_b}{\Im \frak{m}^{1}_b}.
$$

\subsection{Potential functions of toric fibers}
We now review the derivation of potential functions. 
\begin{definition} Let $X$ be a compact symplectic toric manifold and $L(\u)$ be a toric fiber of $X$. The \emph{potential function} \ulp or \emph{Landau-Ginzburg superpotential}\urp on $L(\u)$ is the function 
$$
\frak{PO}^\u : H^1 (L(\u); \Lambda_0 ) \to \Lambda_+
$$
determined by the relation
$$
\sum_{k=0}^\infty \frak{m}^k (b^{\otimes k}) = \frak{PO}^\u (b) \cdot \text{PD}[L(\u)].
$$ 
\end{definition}

With the aid of the toric structure, the potential function can be expressed as a Laurent series of variables coming from a basis of the dual lattice $M$ as follows. We fix a basis $\{ e_1, \cdots, e_n \}$ of $M$. Identifying $M$ with $H^1(L(\u); \Z)$ and regarding it as a basis of  $H^1(L(\u); \Z)$, each cochain $b \in H^1(L(\u); \Lambda_0)$ can be written $b = \sum_{i=1}^n x_i e_i$. Thinking $x_i$ as a coordinate function from $H^1(L(\u); \Z)$, we have $x_i = e^*_i$ and set $y_i := \exp (e^*_i)$ where $\{ e_1^*, \cdots, e_n^* \}$ is the dual basis of $\{ e_1, \cdots, e_n \}$. By the dimension reason, notice that $\frak{m}^0_\beta (1)$ can be nonzero only when the Maslov index of $\beta$ is less than or equal to $2$. Since compact symplectic toric manifolds do not admit any non-constant (virtual) holomorphic discs with Maslov index less than or equal to $0$, the (virtual) fundamental chain of $\mcal{M}_{1}(X, J; L(\u); \beta)$ is a cycle and thus $\frak{m}^0_{\beta}(1)$ becomes a multiple of $\text{PD}[L(\u)]$. The multiple is turned out to be independent to the choice of perturbation and in general a rational number. It is called the \emph{one-point open Gromov-Witten invariant} of $\beta$ and denoted by $n_\beta$. Furthermore, by taking a perturbation compatible with the forgetful map of boundary marking points, $\frak{m}^k_\beta (b^{\otimes k})$ is reduced to $\frak{m}^0_\beta (1)$ with weight $\frac{1}{k!}(\pa \beta \cap b)^k$. Thus, we obatin
\begin{align*}
\sum_{k=0}^\infty \frak{m}^k (b^{\otimes k}) &= \sum_{k=0}^\infty \sum_\beta  \frak{m}^k_\beta (b^{\otimes k}) \, T^{\omega(\beta)/ 2 \pi} \\
&= \sum_\beta \sum_{k=0}^\infty \frac{1}{k!} (b \cap \pa \beta)^k \frak{m}_\beta^0(1) T^{\omega(\beta)/ 2\pi} \\
&= \sum_\beta \sum_{k=0}^\infty \frac{1}{k!} (b \cap \pa \beta)^k n_\beta \, T^{\omega(\beta)/ 2\pi} \cdot \text{PD}[L(\u)]
\end{align*}
where the summation of $\beta$ is over all $\beta$ of Maslov index $2$. Since $\exp (b \cap \pa \beta) = \exp (\langle b, \v_j  \rangle) = y_1 ^{v_{j,1}} \cdots y_n ^{v_{j,n}} $ where $\v_j := (v_{j,1}, \cdots, v_{j,n})$, the potential function can be expressed as a Laurent power series in terms of the variable $\{y_1, \cdots, y_n\}$. When regarding the potential function at $\u$ as a Laurent power series, we denote it by $\frak{PO}^\u (\y)$ where $\y = (y_1, \cdots, y_n)$. 

As observed by Cho-Oh in \cite{ChoOh}, the Maslov index of $\beta$ can be counted as the intersection number between a holomorphic disc representing $\beta$ and the toric divisor (See also Auroux \cite{AuT}). Also, it was shown that there exists a unique holomorphic disc of Maslov index 2 without sphere bubbles corresponding to a facet of the polytope and the potential function consequently contains the terms coming from those holomorphic discs. The part of the potential function that can be read off from the facets of the polytope is called the \emph{leading order potential function} (or \emph{Hori-Vafa potential}) and denoted by $\frak{PO}_0^\u$. Letting $\beta_j$ be the homotopy class represented by the holomorphic disc corresponding to a facet $P_j$ where $\pa P = \sum_{j=1}^m P_j$, the open Gromov-Witten invariant $n_{\beta_j}$ is $1$ and the area formula of $\beta_j$ is given by $2 \pi \cdot l_j (\u) = \omega( \beta_j)$ (\cite{FOOOToric1}, \cite{ChoOh}) In terms of the variable $\{ y_1, \cdots, y_n \}$, the leading order potential function is explicitly written as follows:

\begin{definition}
The \emph{leading order potential function} at $\u$ of $P$ is a Laurent polynomial 
\begin{equation}\label{leadingorderpotentialfunctionofP}
\frak{PO}_0^\u (\y) = \sum_{j=1}^m \y^{\v_j} T^{l_j(\u)}
\end{equation}
where $\v_j := (v_{j,1}, \cdots, v_{j,n})$ is the primitive inward vector normal to $P_j$ and $\y^{\v_j} := y_1^{v_{j,1}} \cdots y_n^{v_{j,n}}$.
\end{definition}

For our usage in Section~\ref{Tropicalizations}, we mention the difference between the potential function and the leading order potential function.  

\begin{theorem} [Theorem 8.2 in \cite{FOOOSurv}] \label{differenceofpotentialfunction}
The difference $\frak{PO}^\u - \frak{PO}^\u_0$ between the potential function and the leading order potential function can be expressed as
$$
\frak{PO}^\u (\y) - \frak{PO}^\u_0 (\y) = \sum_{k} \frak{P}_k (\z) \, T^{ \rho_k}
$$
satisfying the followings: the sequence $\{ \rho_k : k \geq 1 \}$ is a monotonically (not strictly) increasing sequence of positive numbers and goes to infinity if the difference consists of infinitely many terms, and $\frak{P}_k (\z)$ is a non-constant monomial with $\Q$-coefficient in terms of $\z = (z_1, \cdots, z_m)$ where $z_j := \y^{\v_j}T^{l_j(\u)}$.

\end{theorem}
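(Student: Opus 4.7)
The plan is to use the structure of Maslov index 2 holomorphic disc classes in a compact toric manifold to rewrite each contribution in the claimed monomial form. Starting from the formula
$$\frak{PO}^\u(\y) = \sum_{\beta} n_\beta \, \y^{\partial\beta}\, T^{\omega(\beta)/2\pi},$$
with $\beta \in \pi_2(X, L(\u))$ of Maslov index 2, the leading order piece $\frak{PO}^\u_0$ picks out exactly the basic disc classes $\beta_1,\ldots,\beta_m$, each contributing a single $z_j$. The correction therefore collects the contributions of the remaining (non-basic) classes, and the task is to rewrite each of these as a non-constant monomial in $\z$ multiplied by a strictly positive power of $T$.

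The heart of the argument would be to invoke the structure theorem from \cite{FOOOToric1} (building on Cho-Oh \cite{ChoOh}): every Maslov index 2 class $\beta$ whose moduli space is non-empty decomposes as
$$\beta = \sum_{j=1}^m k_j \beta_j + \alpha, \qquad k_j \in \Z_{\geq 0},$$
with $\alpha \in H_2(X; \Z)$ an effective sum of $J$-holomorphic sphere classes. Then $\partial \beta = \sum_j k_j \v_j$ and the Maslov count forces $\sum_j k_j + c_1(\alpha) = 1$. Using $\omega(\beta_j) = 2\pi l_j(\u)$ yields
$$\y^{\partial\beta}\, T^{\omega(\beta)/2\pi} = \prod_{j=1}^m \bigl(\y^{\v_j}\, T^{l_j(\u)}\bigr)^{k_j}\cdot T^{\omega(\alpha)/2\pi} = \prod_{j=1}^m z_j^{k_j}\cdot T^{\omega(\alpha)/2\pi}.$$
For any non-basic $\beta$ the class $\alpha$ must be nontrivial, and positivity of symplectic area on non-constant holomorphic spheres gives $\omega(\alpha) > 0$. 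Non-constancy of the resulting monomial $\prod_j z_j^{k_j}$ would follow by ruling out purely spherical $\beta$ (with $\sum k_j = 0$), since a constant disc with a sphere bubble generically does not meet $L(\u)$ and so does not produce a non-trivial one-point Gromov-Witten invariant after $T^n$-equivariant perturbation.

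Finally, I would set $\rho(\beta) := \omega(\alpha)/2\pi > 0$, collect the contributions with a common value of $\rho(\beta)$ into a monomial sum $\frak{P}_k(\z)$ with rational coefficients (reflecting $n_\beta \in \Q$), and arrange the distinct values in an increasing sequence $\rho_1 \leq \rho_2 \leq \cdots$ tending to infinity whenever the correction is infinite, by Gromov compactness on the number of $\beta$'s with area below any fixed bound. The principal obstacle is securing the decomposition $\beta = \sum k_j \beta_j + \alpha$ with $k_j \geq 0$ and $\alpha$ effective; this is the non-trivial toric input, relying on the explicit classification of Maslov index 2 holomorphic discs in toric manifolds with the torus-invariant complex structure together with FOOO's analysis of disc bubbling in the perturbed moduli spaces. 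Once granted, the rest of the argument is algebraic regrouping of terms by $T$-valuation.
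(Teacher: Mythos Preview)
The paper does not actually prove this statement; it is quoted as Theorem~8.2 in \cite{FOOOSurv} and stated without proof, serving as background input for the later arguments (notably Lemma~\ref{tropicalizationofpotentialleading}). Your sketch is essentially the argument one finds in the FOOO references: the decomposition $\beta = \sum_j k_j\beta_j + \alpha$ with $k_j \geq 0$ and $\alpha$ an effective sphere class is precisely the structural input from \cite{ChoOh} and \cite{FOOOToric1}, and once that is granted the rest is the algebraic regrouping you describe.

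One small slip in your final step: you propose to ``collect the contributions with a common value of $\rho(\beta)$ into a monomial sum $\frak{P}_k(\z)$'', but the statement requires each $\frak{P}_k(\z)$ to be a single \emph{monomial}, not a sum. The phrase ``monotonically (not strictly) increasing'' for $\{\rho_k\}$ is the signal that repeated values are permitted, so the correct indexing is by individual contributing classes $\beta$ (one monomial $n_\beta \prod_j z_j^{k_j}$ per $\beta$), arranged so that the corresponding $\rho_k = \omega(\alpha)/2\pi$ form a nondecreasing sequence. With that adjustment your argument is the intended one.
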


\subsection{Non-displacement of toric fibers}

Our next goal is detecting which toric fiber has a non-trival Floer cohomology. It is related to a critical point of the potential function.\footnote{Theorem~\ref{Ainftyrelationsum}(3) is used for this relation.}

\begin{theorem} [Theorem 4.10 in \cite{FOOOToric1}] \label{criticalandnontrivial}
Let $\u$ be a point in the interior of a moment polytope $P$. The followings are equivalent. 
\begin{enumerate}
\item The potential function $\frak{PO}^\u$ has a critical point on $(\Lambda_U)^n$. That is, the system of equations $y_i \frac{\pa (\frak{PO}^\u)}{\pa y_i} = 0$ admits a solution on $(\Lambda_U)^n$.
\item There exists a cochain $b \in H^1(L(\u); \Lambda_0)$ such that the deformed Floer cohomology $HF((L(\u), b), (L(\u), b); \Lambda_0)$ is isomorphic to $H(T^n; \Lambda_0)$.
\end{enumerate}
\end{theorem}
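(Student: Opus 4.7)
The strategy is to express both conditions in terms of the deformed differential $\frak{m}^{1}_b$ and connect them through the defining relation $\sum_{k \geq 0} \frak{m}^k(b^{\otimes k}) = \frak{PO}^\u(b)\cdot \text{PD}[L(\u)]$. Writing $b = \sum_i x_i e_i \in H^1(L(\u);\Lambda_0)$ with $y_i = \exp(x_i)$, the key identity to establish is
$$
\frak{m}^{1}_b(e_i) \;=\; y_i \, \frac{\pa \frak{PO}^\u}{\pa y_i}(\y) \, \cdot \, \text{PD}[L(\u)]
$$
for each basis element $e_i$ (up to the sign dictated by the $\Z/2$-grading of Remark~\ref{coventiongradingparameter}). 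This identity arises by formally differentiating the defining relation in the direction $e_i$: expanding
$$
\frak{m}^{1}_b(e_i) \;=\; \sum_{n_0,n_1 \geq 0} \frak{m}^{1+n_0+n_1}\!\left(b^{\otimes n_0},\, e_i,\, b^{\otimes n_1}\right)
$$
assembles every insertion of $e_i$ into a string of $b$'s, which is precisely $\pa_{x_i}\frak{PO}^\u(b) \cdot \text{PD}[L(\u)]$; converting the derivative $\pa_{x_i}$ to $y_i \pa_{y_i}$ via $y_i = e^{x_i}$ yields the displayed formula.

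For $(1)\Rightarrow(2)$, given a solution $\y \in (\Lambda_U)^n$ of $y_i\pa_{y_i}\frak{PO}^\u = 0$, set $x_i := \log y_i \in \Lambda_0$ and $b := \sum_i x_i e_i$. The key identity then forces $\frak{m}^{1}_b(e_i) = 0$ for every $i$. Because $H^\bullet(L(\u);\Lambda_0) \cong \Lambda^\bullet H^1(L(\u);\Lambda_0)$ and the $T^n$-equivariant perturbation of Section~\ref{Ainftyalgebrasontoricfibers} forces $\frak{m}^{1}_b$ to be compatible with this exterior-algebra structure, the vanishing extends to all of $H^\bullet(L(\u);\Lambda_0)$. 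Hence $HF((L(\u),b),(L(\u),b);\Lambda_0)$ collapses to $H^\bullet(L(\u);\Lambda_0) \cong H^\bullet(T^n;\Lambda_0)$.

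For $(2)\Rightarrow(1)$, suppose a bounding cochain $b$ makes $HF((L(\u),b),(L(\u),b);\Lambda_0) \cong H^\bullet(T^n;\Lambda_0)$. Comparing ranks over the fraction field $\Lambda$ of $\Lambda_0$, both $\ker \frak{m}^{1}_b / \operatorname{im} \frak{m}^{1}_b$ and $H^\bullet(L(\u);\Lambda_0)$ have rank $2^n$, which forces $\frak{m}^{1}_b \equiv 0$ on the free $\Lambda_0$-module $H^\bullet(L(\u);\Lambda_0)$. Applying the key identity to each $e_i$ gives $y_i\,\pa_{y_i}\frak{PO}^\u(\y) = 0$; moreover $y_i = \exp(e_i^{*}(b)) \in \Lambda_U$ because $b \in H^1(L(\u);\Lambda_0)$.

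The main obstacle is the rigorous derivation of the key identity and the upgrade of the vanishing of $\frak{m}^{1}_b$ from $H^1$ to all of $H^\bullet(L(\u);\Lambda_0)$. Both rest on the $T^n$-equivariant Kuranishi perturbations of \cite{FOOOToric1}: $T^n$-invariance forces each $\frak{m}^{k}_\beta$ to factor through the boundary data $\pa\beta$, so that the canonical model acquires a Clifford-/Koszul-type structure in which these algebraic manipulations are legitimate. Once these toric-specific inputs are granted, both implications follow by direct $A_\infty$-algebra and rank-counting arguments.
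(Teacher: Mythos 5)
This is a theorem that the paper imports from \cite{FOOOToric1} (Theorem 4.10 there) and does not prove itself, so there is no in-paper proof to compare against; I will evaluate your sketch on its own terms.

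Your scaffolding is correct and is essentially the standard route: the identity
$\frak{m}^{1}_b(e_i) = y_i\,\partial_{y_i}\frak{PO}^\u(\y)\cdot\text{PD}[L(\u)]$
is a valid formal consequence of the relation $\sum_k\frak{m}^k(b^{\otimes k})=\frak{PO}^\u(b)\cdot\text{PD}[L(\u)]$, because differentiating the right side in $x_i$ automatically stays in the line $\Lambda_0\cdot\text{PD}[L(\u)]$, and differentiating the left side produces $\frak{m}^1_b(e_i)$. Your rank argument for $(2)\Rightarrow(1)$ is also sound: $H^\bullet(L(\u);\Lambda_0)$ is free, so $\ker\frak{m}^1_b/\operatorname{im}\frak{m}^1_b\cong\Lambda_0^{2^n}$ forces $\operatorname{rank}_\Lambda\operatorname{im}\frak{m}^1_b=0$ and hence $\frak{m}^1_b=0$, after which the identity yields a critical point in $(\Lambda_U)^n$.

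The genuine gap is in $(1)\Rightarrow(2)$. Knowing $\frak{m}^1_b$ vanishes on $H^1(L(\u);\Lambda_0)$ does not, by any straightforward Leibniz argument, make it vanish on $H^\bullet$: the deformed structure map $\frak{m}^1_b$ is a derivation with respect to $\frak{m}^2_b$ only up to higher $A_\infty$-corrections, and $H^\bullet$ is generated over $H^1$ by the \emph{classical} cup product, not by $\frak{m}^2_b$. Your appeal to a ``Clifford-/Koszul-type structure'' coming from $T^n$-equivariance is precisely the assertion that needs a proof, and it is where all the work lies. In \cite{FOOOToric1} this step is handled through a nontrivial dichotomy result (the deformed Floer cohomology of a toric fiber is either zero or all of $H^\bullet(T^n)$, which rests on the cyclic/Poincar\'{e} duality structure and a filtration argument on the energy), combined with showing that under the critical-point hypothesis the unit does not become exact. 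Without invoking that dichotomy or supplying an explicit inductive argument over the $T$-adic filtration, the implication from ``$\frak{m}^1_b|_{H^1}=0$'' to ``$HF\cong H^\bullet(T^n;\Lambda_0)$'' is unjustified.

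A secondary, smaller issue: the key identity relies on the compatibility of the $T^n$-equivariant Kuranishi perturbation with the forgetful map (so that $\frak{m}^k_\beta$ with degree-one inputs reduces to $\frak{m}^0_\beta(1)$ with a combinatorial weight), which the paper records informally in Section~\ref{Ainftyalgebrasontoricfibers}; your sketch correctly flags this as an input but should make clear it is being taken as a black box from \cite{FOOOToric1} rather than rederived.
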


\begin{corollary} [Proposition 4.12 in \cite{FOOOToric1}] \label{stronglybalancednondisplaceable}
If the potential function at $\u$ of $P$ has a critical point on $(\Lambda_U)^n$, then the toric fiber $L(\u)$ is non-displaceable. 
\end{corollary}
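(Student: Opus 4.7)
The plan is to deduce the corollary directly from Theorem~\ref{criticalandnontrivial} combined with the standard displacement energy argument in Lagrangian Floer theory. Suppose that $\frak{PO}^\u$ admits a critical point on $(\Lambda_U)^n$. By Theorem~\ref{criticalandnontrivial}, there exists a bounding cochain $b \in H^1(L(\u); \Lambda_0)$ such that
\[
HF^\bullet\bigl((L(\u), b), (L(\u), b); \Lambda_0\bigr) \cong H^\bullet(T^n; \Lambda_0),
\]
which in particular is nonzero. The task is to convert this nontriviality into non-displaceability of $L(\u)$.

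The second step is to invoke the Hamiltonian invariance of Floer cohomology in the bulk-deformed setting. For any Hamiltonian diffeomorphism $\phi$ of $X$, one constructs a filtered $A_\infty$-bimodule over the pair of $A_\infty$-algebras associated with $L(\u)$ and $\phi(L(\u))$; its cohomology, deformed by the bounding cochain $b$ on one side and the pushed-forward bounding cochain $\phi_* b$ on the other, is isomorphic to the self-Floer cohomology $HF^\bullet((L(\u), b), (L(\u), b); \Lambda_0)$. This invariance is exactly the content of the usual Hamiltonian invariance theorem applied in the filtered $A_\infty$ framework.

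The third step is the displacement dichotomy: if $L(\u)$ were displaceable, there would exist a Hamiltonian diffeomorphism $\phi$ with $\phi(L(\u)) \cap L(\u) = \emptyset$. For such a $\phi$, the Floer chain complex computing $HF^\bullet((L(\u), b), (\phi(L(\u)), \phi_* b); \Lambda_0)$ is generated by intersection points and is therefore the zero complex, so its cohomology vanishes. Combined with the invariance in the previous step, this forces
\[
H^\bullet(T^n; \Lambda_0) \cong HF^\bullet((L(\u), b), (L(\u), b); \Lambda_0) = 0,
\]
a contradiction. Hence no such $\phi$ exists and $L(\u)$ is non-displaceable.

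The main conceptual input that does all the work here is Theorem~\ref{criticalandnontrivial}, which links the algebraic condition of having a critical point of $\frak{PO}^\u$ on $(\Lambda_U)^n$ to the existence of a bounding cochain realizing the maximal possible Floer cohomology; the remainder is the standard Hamiltonian invariance plus the vanishing for disjoint Lagrangians. The only delicate point is ensuring that the bulk-free, $b$-deformed Floer theory used in Theorem~\ref{criticalandnontrivial} satisfies Hamiltonian invariance over $\Lambda_0$ (rather than merely over $\Lambda$), which is why Theorem~\ref{Ainftyrelationsum}(3) (canonicality/minimality of the $A_\infty$-algebra) is invoked in the reference as indicated in the footnote; since the corollary is quoted from \cite{FOOOToric1}, we may take this invariance as established there.
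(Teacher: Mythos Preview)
The paper does not give its own proof of this corollary; it simply quotes the result as Proposition 4.12 in \cite{FOOOToric1}. Your argument is the standard one and is correct: nonvanishing self-Floer cohomology (supplied by Theorem~\ref{criticalandnontrivial}) together with Hamiltonian invariance and the trivial vanishing for disjoint Lagrangians forces non-displaceability.

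One small misattribution: the footnote in the paper about Theorem~\ref{Ainftyrelationsum}(3) is attached to Theorem~\ref{criticalandnontrivial}, not to this corollary. Canonicality of the $A_\infty$-algebra is used to pass from a critical point of $\frak{PO}^\u$ to nonvanishing Floer cohomology (i.e.\ it enters the proof of Theorem~\ref{criticalandnontrivial}), not for the Hamiltonian invariance step. For the non-displaceability conclusion it suffices that $HF^\bullet((L(\u),b),(L(\u),b);\Lambda) \neq 0$, and that follows immediately since $H^\bullet(T^n;\Lambda_0)\otimes_{\Lambda_0}\Lambda \neq 0$; there is no subtlety about $\Lambda_0$ versus $\Lambda$ at this stage.
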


\begin{definition} \label{stronglybalanced}
A toric fiber $L(\u)$ is called \emph{strongly balanced} if the potential function at $\u$ of $P$ has a critical point on $(\Lambda_U)^n$.\footnote{The notion of \emph{balanced} fibers was firstly introduced in \cite{ChoOh}}
\end{definition}

To include more non-displaceable toric fibers, Fukaya-Oh-Ohta-Ono introduced a notion of $\emph{balanced}$ fibers in \cite{FOOOToric1}. To present the definition, we need two auxiliary definitions. 

\begin{definition} Let $P$ be the polytope in $M_\R$ given by 
$$
\displaystyle P = \bigcap_{j=1}^m \left\{ \u \in M_\R : \langle \u, \v_j \rangle - \lambda_j \geq 0 \right\}.
$$ 
For a face $Q$ of $P$, we set $\sigma_Q$ to be the cone generated by $\{ \v_j : \text{a facet } P_j \text{ contains } Q \}$ in $N_\R$. The \emph{normal fan} $\Sigma_P$ to $P$ is the collection of all cones $\sigma_Q$ where $Q$ is a face of $P$. 
\end{definition}

For properties and examples of normal fans, one is referred to $\S 2.3.$ in \cite{CLS}. 

\begin{definition}\label{Hausdorffdistance}
For a pair of polyhedra $(Q_1, Q_2)$ in $\R^n$, the \emph{Hausdorff distance} $d_{\text{Haus}}(Q_1, Q_2)$ between $Q_1$ and $Q_2$ is defined to be
$$
d_{\text{Haus}}(Q_1, Q_2) = \max \left\{ \sup_{q_1 \in Q_1} \left( \inf_{q_2 \in Q_2} d(q_1, q_2) \right), \sup_{q_2 \in Q_2} \left( \inf_{q_1 \in Q_1} d(q_1, q_2) \right) \right\}.
$$
Here, $d$ in the right hand side denotes the Euclidean distance in $\R^n$. 
\end{definition}

Here, a \emph{polyhedron} is meant to be the intersection of finitely many closed half-spaces. Note that a polyhedron is not necessarily compact. 

\begin{definition}[Definition 4.11 in \cite{FOOOToric1}]\label{balanced}
Let $\Sigma \subset N_\R$ be the normal fan of a moment polytope $P$. Let $X_\Sigma$ be the compact complex toric manifold given by the fan $\Sigma$. A toric fiber $L(\u)$ is called $\emph{balanced}$ if there exists a sequence of triples $(\omega^{(i)}, P^{(i)}, \u^{(i)})$ such that 
\begin{enumerate}
\item Each symplectic form $\omega^{(i)}$ is a torus-invariant symplectic form associated to $X_\Sigma$ and the sequence of symplectic forms $\omega^{(i)}$ converges to $\omega$.
\item The normal fan of each $P^{(i)}$ coincides with the normal fan $\Sigma$ of $P$ and the sequence of polytopes $P^{(i)}$ converges to $P$ with respect to the Hausdorff distance.
\item The sequence of positions $\u^{(i)} \in \text{Int}(P^{(i)})$ converges to $\u \in \text{Int}(P)$. 
\item For every sufficiently large integer $N$, there exist an integer $\iota$ and a $1$-cochain $b^{(i)}_N \in H^1(L(\u^{(i)}); \Lambda_0)$ (depending on both $i$ and $N$) such that 
$$
HF^\bullet \left( (L(\u^{(i)}), b^{(i)}_N), (L(\u^{(i)}), b^{(i)}_N); \Lambda_0 / T^N \right) \simeq H^\bullet \left( T^n; \Lambda_0 / T^N \right)
$$
whenever $i \geq \iota$. 
\end{enumerate}
\end{definition}

\begin{theorem} [ Proposition 4.12 in \cite{FOOOToric1}] \label{balancedisnondisplaceable}
If $L(\u)$ is balanced, then $L(\u)$ is non-displaceable.
\end{theorem}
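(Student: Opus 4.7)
The plan is to prove non-displaceability by contradiction, combining the openness of displaceability under small perturbations of both the symplectic form and the Lagrangian with the well-known vanishing of Floer cohomology, truncated at a sufficiently high power of the Novikov parameter $T$, once the Lagrangian has been displaced by a Hamiltonian of bounded Hofer norm. The numerical contradiction will come from comparing this vanishing against condition (4) in the definition of balanced, which supplies non-trivial truncated Floer cohomology on every approximating fiber.

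Suppose toward contradiction that $L(\u)\subset (X,\omega)$ is displaced by some $\phi=\phi_H^1$, and let $E=\|H\|$ denote its Hofer norm. By condition (1) of Definition~\ref{balanced}, the symplectic forms $\omega^{(i)}$ on the common underlying smooth toric manifold $X_\Sigma$ converge to $\omega$, so a Moser-type argument produces diffeomorphisms $\psi_i\colon X_\Sigma\to X_\Sigma$ with $\psi_i^*\omega=\omega^{(i)}$ and $\psi_i\to\id$ in the $C^\infty$-topology for $i$ large. Combining this with conditions (2) and (3), the Lagrangian $\psi_i(L(\u^{(i)}))\subset(X_\Sigma,\omega)$ converges in $C^\infty$ to $L(\u)$. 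Since displaceability by a fixed Hamiltonian diffeomorphism is an open condition on the Lagrangian, for all sufficiently large $i$ the diffeomorphism $\phi$ also displaces $\psi_i(L(\u^{(i)}))$; pulling this back through $\psi_i$ produces a Hamiltonian $H^{(i)}$ on $(X_\Sigma,\omega^{(i)})$ displacing $L(\u^{(i)})$, whose Hofer norm satisfies $\|H^{(i)}\|\leq E+\varepsilon_i$ with $\varepsilon_i\to 0$.

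Now invoke the displacement energy estimate for bounding-cochain-deformed Floer cohomology (Theorem~J of \cite{FOOOToric1}, cf.\ \cite{FOOO}): whenever a compact Lagrangian $L'$ with bounding cochain $b'$ is displaced by a Hamiltonian of Hofer norm $E'$, one has
\[
HF^{\bullet}\bigl((L',b'),(L',b');\Lambda_0/T^{N}\bigr)=0
\quad\text{for every } N>E'/(2\pi),
\]
where the normalization matches the convention $T^{\omega(\beta)/2\pi}$ used in \eqref{leadingorderpotentialfunctionofP}. Choose any integer $N$ with $N>E/(2\pi)+1$; then for all $i$ large enough we have $N>\|H^{(i)}\|/(2\pi)$, so the above estimate applied to $L'=L(\u^{(i)})$ and $b'=b^{(i)}_N$ forces the truncated Floer cohomology to vanish. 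But condition (4) of Definition~\ref{balanced} guarantees that the same group is isomorphic to $H^{\bullet}(T^n;\Lambda_0/T^N)\neq 0$, a contradiction.

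The only non-routine step is step two: one must ensure that the Hofer energies $\|H^{(i)}\|$ remain uniformly bounded as $i\to\infty$, so that a single threshold $N$ can be chosen independently of $i$ and matched against the $\iota$ provided by condition (4). Once the Moser identification is carried out carefully (with uniform $C^\infty$-control), this uniform energy bound follows from the continuity of the Hofer norm under pullback by maps close to the identity, and the rest of the argument is bookkeeping.
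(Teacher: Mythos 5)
Your proof is correct and follows essentially the same route as the original argument in \cite{FOOOToric1}: perturb back to the approximating fibers $L(\u^{(i)})$ via a Moser trivialization, transfer the (hypothetical) displacing Hamiltonian to $(X_\Sigma,\omega^{(i)})$, and play the resulting uniform Hofer-energy bound against the nonvanishing of the truncated Floer cohomology from condition~(4) of Definition~\ref{balanced}, via the displacement-energy estimate for the deformed Floer complex. One small simplification worth noting: since the Moser map $\psi_i$ is a symplectomorphism $(X_\Sigma,\omega^{(i)})\to(X_\Sigma,\omega)$, the conjugate $\psi_i^{-1}\phi\psi_i$ is generated by $H_t\circ\psi_i$, which has exactly the same oscillation as $H_t$, so $\|H^{(i)}\|=E$ on the nose and the ``$+\varepsilon_i$'' and the continuity discussion in your final paragraph are unnecessary; this removes the only step you flagged as needing care.
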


\section{Tropicalizations}\label{Tropicalizations}
The goal of this section is to recall some notions and results from tropical geometry which can be found in \cite{OP}, \cite{EKL}, \cite{MS} for example. The results will be applied to our situation to see which Lagrangian toric fibers are non-displaceable. Moreover, we define tropicalizations relative to a lattice point, which is relevant to our application detecting non-displaceability of toric fibers in later sections. 

Even if the notions and the results in tropical geometry is stated over a more general field in \cite{OP}, we are only working over the Novikov field $\Lambda$ over $\C$ because it is enough for our purpose. Throughout this section, we fix a basis for $M$ so that $M_\R$ (resp. $N_\R$) will be identified with $\R^n$ (resp. $\R^n$) without any mention of the identification.

\subsection{Lifting of tropicalizations to intersection points}\label{Lifting of tropicalizations to intersection points}
We begin by recalling some definitions in \cite{EKL}, \cite{MS}. Suppose that we are given a nonzero Laurent polynomial over $\Lambda$
$$
\frak{P}(\x) = \sum_{\v \in N} a_\v \x^\v \in \Lambda [x_1^\pm, \cdots, x_n^\pm]
$$
where $\x := (x_1, \cdots , x_n), \v := (v_1, \cdots, v_n)$ and $\x^\v = x_1^{v_1} \cdots x_n^{v_n}$. We consider the piecewise-linear function $\trop^\frak{P}$ given by
$$\trop^\frak{P} : M_\R \to \R, \quad 
\displaystyle \u \mapsto \min_{ \v \in N } \left\{ \frak{v}_T ( a_\v ) + \langle \u , \v \rangle  \right\}.
$$
\begin{definition}\label{tropicalizationofequation}
The \emph{tropicalization} of $\frak{P}$ is defined to be the non-differentiable locus of $\trop^\frak{P}$ and is denoted by $\Trop (\frak{P})$.
\end{definition}

\begin{definition} Let $\frak{v}_T^n : (\Lambda^*)^n \to M_\R$ be the component-wise valuation map given by
$$
\frak{v}_T^n (x_1, \cdots, x_n) := (\frak{v}_T (x_1), \cdots , \frak{v}_T (x_n)).
$$
For a variety $X := \text{Spec} \left( \Lambda [ x_1^\pm, \cdots , x_n^\pm ] ) / I \right) $ in the torus $(\Lambda^*)^n$ where $I$ is an ideal of $\Lambda [ x_1^\pm, \cdots , x_n^\pm ]$, the \emph{tropicalization} of $X$ is defined to be the closure of the image of $X$ under $\frak{v}_T^n$ in $M_\R$ and is denoted by $\T(X)$.
\end{definition}

Einsiedler-Kapranov-Lind in \cite{EKL} (See also Theorem 3.1.3. in \cite{MS}) proved that if $X$ is the hypersurface given by $\frak{P} \in \Lambda [x_1^\pm, \cdots, x_n^\pm]$, then $\Trop(\frak{P})$ is equal to $\T(X)$ at least as a set. In this case, we have explicit description of the tropicalization $\T(X) = \Trop(\frak{P})$ as follows.

\begin{proposition} [Theorem 3.1.3. in \cite{MS}] \label{explicitdescriptionoftropicalizationoffunction}The tropicalization $\Trop(\frak{P})$ is the collection of points $\u \in M_\R$ on which the minimum in $\trop^\frak{P}$ is attained by at least two different terms $a_{\v_1} \x^{\v_1}, a_{\v_2} \x^{\v_2}$ with $a_{\v_1}, a_{\v_2} \neq 0$ so that 
$$
\min_{ \v \in N } \left\{ \frak{v}_T ( a_\v ) + \langle \u , \v \rangle  \right\} = \frak{v}_T ( a_{\v_1} ) + \langle \u , \v_1 \rangle =  \frak{v}_T ( a_{\v_2} ) + \langle \u , \v_2 \rangle.
$$
\end{proposition}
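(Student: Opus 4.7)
The plan is to prove Proposition~\ref{explicitdescriptionoftropicalizationoffunction} directly from the definition, using the fact that $\trop^{\mathfrak{P}}$ is the pointwise minimum of the finitely many affine linear functions
$$
f_{\v}(\u) := \mathfrak{v}_T(a_{\v}) + \langle \u, \v\rangle, \qquad \v \in \operatorname{supp}(\mathfrak{P}) := \{\v \in N : a_{\v} \neq 0\},
$$
since $\mathfrak{P}$ is a Laurent polynomial and therefore its support is finite. For each $\u_0 \in M_{\R}$, denote by
$$
S(\u_0) := \left\{ \v \in \operatorname{supp}(\mathfrak{P}) : f_{\v}(\u_0) = \trop^{\mathfrak{P}}(\u_0) \right\}
$$
the set of indices where the minimum is attained. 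The claim is that $\u_0 \in \Trop(\mathfrak{P})$ if and only if $|S(\u_0)| \ge 2$, so I will handle the two directions separately.

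First I would show that if $|S(\u_0)| = 1$, say $S(\u_0) = \{\v_0\}$, then $\trop^{\mathfrak{P}}$ is differentiable (in fact affine linear) on a neighbourhood of $\u_0$. By the strict inequality $f_{\v_0}(\u_0) < f_{\v}(\u_0)$ for all $\v \in \operatorname{supp}(\mathfrak{P}) \setminus \{\v_0\}$ and by continuity of each $f_{\v}$, there is an open neighbourhood $U$ of $\u_0$ on which $f_{\v_0} < f_{\v}$ for every such $\v$. Finiteness of the support is essential here: only finitely many strict inequalities must be preserved. Hence $\trop^{\mathfrak{P}} = f_{\v_0}$ on $U$, and this restriction is smooth, so $\u_0 \notin \Trop(\mathfrak{P})$.

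Next I would show the converse: if $|S(\u_0)| \ge 2$ then $\trop^{\mathfrak{P}}$ is non-differentiable at $\u_0$. Pick two distinct indices $\v_1, \v_2 \in S(\u_0)$. Since $\v_1 \neq \v_2$, the vector $\v_1 - \v_2 \neq 0$, so there exists $\w \in M_{\R}$ with $\langle \w, \v_1 - \v_2\rangle \neq 0$; after relabelling, assume $\langle \w, \v_1\rangle < \langle \w, \v_2\rangle$. Restricting to the line $t \mapsto \u_0 + t\w$, the function $t \mapsto \trop^{\mathfrak{P}}(\u_0 + t\w)$ is a minimum of finitely many affine functions of $t$, each of whose value at $t=0$ is $\ge \trop^{\mathfrak{P}}(\u_0)$ with equality precisely for $\v \in S(\u_0)$. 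Its right and left derivatives at $t = 0$ equal
$$
D^{+}\trop^{\mathfrak{P}}(\u_0;\w) = \min_{\v \in S(\u_0)} \langle \w, \v\rangle, \qquad D^{-}\trop^{\mathfrak{P}}(\u_0;\w) = \max_{\v \in S(\u_0)} \langle \w, \v\rangle,
$$
(the latter with the usual sign convention for left derivatives). By the choice of $\w$, these two quantities differ, so $\trop^{\mathfrak{P}}$ fails to be differentiable at $\u_0$, giving $\u_0 \in \Trop(\mathfrak{P})$.

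The only step requiring any genuine care is the converse direction, where one must rule out accidental cancellation of slopes; this is why I would explicitly compute the one-sided directional derivatives along a test vector $\w$ chosen to separate the gradients $\v_1$ and $\v_2$. Everything else — continuity, finiteness of the support, and the fact that a minimum of finitely many smooth functions is smooth precisely where a unique minimizer exists — is routine, and the finiteness of $\operatorname{supp}(\mathfrak{P})$ prevents the kind of pathology one would otherwise need to exclude.
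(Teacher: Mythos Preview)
Your argument is correct. The paper does not actually prove this proposition: it is quoted verbatim as Theorem~3.1.3 of \cite{MS} and used as a black box, with no argument supplied. Your proof is the standard elementary one --- a pointwise minimum of finitely many affine functions is smooth exactly where a single affine piece is strictly minimal --- and the computation of the one-sided directional derivatives along a separating direction $\w$ is the right way to certify non-differentiability when $|S(\u_0)|\ge 2$. The only cosmetic point is that your formula $D^{-}\trop^{\mathfrak P}(\u_0;\w)=\max_{\v\in S(\u_0)}\langle \w,\v\rangle$ presupposes the convention $D^{-}g(0)=\lim_{t\to 0^-}(g(t)-g(0))/t$; with that convention the two one-sided derivatives differ precisely because $\langle \w,\v_1\rangle\neq\langle \w,\v_2\rangle$, so the conclusion follows. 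Nothing further is needed.
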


In some cases, the intersection of tropicalizations of varieties in $(\Lambda^*)^n$ lifts to intersection points of varieties. According to the main result of Osserman and Payne in \cite{OP}, it happens when $\T(X)$ and $\T(X^\prime)$ \emph{intersect properly} at $\u \in N_\R$.

\begin{definition}\label{intersectproperly}
Tropicalizations $\T(X)$ and $\T(X^\prime)$ \emph{intersect properly} at $\u$ if $\T(X) \cap \T(X^\prime)$ has exactly codimension $\textup{codim} \, X + \textup{codim} \, X^\prime$ in a neighborhood of $\u$. 
\end{definition}

\begin{theorem} [Theorem 1.1. in \cite{OP}] \label{Tropicallifting} If tropicalizations $\T(X)$ and $\T(X^\prime)$ intersect properly at $\u$, then $\u$ is contained in $\T(X \cap X^\prime)$. 
\end{theorem}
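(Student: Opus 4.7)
The plan is to reduce the statement to one about initial degenerations and then invoke the transverse-intersection lifting result of Bogart-Jensen-Speyer-Sturmfels-Thomas after a careful perturbation. After translating $X'$ by an element of $(\Lambda^*)^n$ whose component-wise valuation is $-\u$, I may assume $\u = 0$. By the Bieri-Groves structure theorem, $\T(X)$ and $\T(X')$ are pure rational polyhedral complexes of dimensions $\dim X$ and $\dim X'$, and by the fundamental theorem of tropical geometry the condition $0 \in \T(Y)$ is equivalent to non-emptiness of the initial scheme $\text{in}_0(Y) \subset (\C^*)^n$ over the residue field $\C$. Thus the goal becomes producing a $\C$-point of $\text{in}_0(X \cap X')$.

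The key technical device is the stable tropical intersection $\T(X) \cdot_{\text{st}} \T(X')$, obtained by translating $\T(X')$ by a generic small vector $\epsilon \in \R^n$, recording Sturmfels-Tevelev multiplicities at the resulting transverse intersection points, and letting $\epsilon \to 0$. For each such generic $\epsilon$, I fix a lift $t_\epsilon \in (\Lambda^*)^n$ with $\frak{v}_T^n(t_\epsilon) = \epsilon$. Because proper intersection at $0$ is an open condition on the translate, the complexes $\T(X)$ and $\T(X') + \epsilon = \T(t_\epsilon \cdot X')$ meet transversally near $0$, and each transverse intersection point $\u_\epsilon$ lies in $\T(X \cap t_\epsilon \cdot X')$ by the transverse lifting theorem. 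Selecting $\u_\epsilon \to 0$ as $\epsilon \to 0$ and passing to the limit of initial degenerations, one hopes to conclude $0 \in \T(X \cap X')$.

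The main obstacle is justifying this limit. A priori, as $\epsilon \to 0$ the $\C$-points witnessing $\u_\epsilon \in \T(X \cap t_\epsilon \cdot X')$ could escape to the boundary of the torus, producing no usable point of $\text{in}_0(X \cap X')$. The proper intersection hypothesis is precisely what prevents this pathology: embedding in a complete toric variety whose fan simultaneously refines the normal fans of both $\T(X)$ and $\T(X')$ near $0$, the closures $\overline{X}$ and $\overline{X'}$ meet in the expected codimension along the toric stratum corresponding to $0$, and the intersection multiplicity there is a positive integer that is locally constant under the perturbation by $t_\epsilon$. This continuity of intersection numbers in a flat family supplies the required compactness, so the perturbed intersection points cannot disappear in the limit, and one extracts a genuine $\C$-point of $\text{in}_0(X \cap X')$. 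The delicate bookkeeping is in matching the stable-intersection multiplicities with the Sturmfels-Tevelev weights so that this positivity is guaranteed.
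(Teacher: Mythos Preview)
The paper does not contain a proof of this statement. Theorem~\ref{Tropicallifting} is quoted as Theorem~1.1 of Osserman--Payne \cite{OP} and is used purely as an imported black box (alongside the more specific Theorem~\ref{Tropicallifting2}, also quoted from \cite{OP}) to motivate Corollary~\ref{Tropicaliftingbalanced}. There is therefore no ``paper's own proof'' to compare your attempt against.

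For what it is worth, your outline is broadly in the spirit of the actual argument in \cite{OP}: the passage to initial degenerations, the toric compactification whose fan refines the local polyhedral structure of both tropicalizations, and the positivity and deformation-invariance of intersection numbers are all central ingredients there. However, what you have written is a strategy sketch rather than a proof. The sentence ``this continuity of intersection numbers in a flat family supplies the required compactness, so the perturbed intersection points cannot disappear in the limit'' is precisely where the substantive work lies, and Osserman--Payne devote most of their paper to making that step rigorous through extended tropicalizations and a careful matching of algebraic and tropical intersection multiplicities. Your invocation of the BJSST transverse-lifting result at each perturbed $\epsilon$ is also not quite how \cite{OP} proceeds; they establish the transverse case and the properly-intersecting case within the same framework rather than bootstrapping from the former.
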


For our application, we need to deal with the intersection of more than two tropicalizations and so we need to use the following more specific result in \cite{OP} which fits into our situation well. 

\begin{theorem} [Theorem 5.2.3 in \cite{OP}] \label{Tropicallifting2}
Let $X_1, \cdots, X_n$ be hypersurfaces in the torus $(\Lambda^*)^n$. Suppose that $\u$ is an isolated point of the intersection of tropicalizations $\T(X_1), \dots, \T(X_n)$. Then $\u$ is contained in $\T(X_1 \cap \cdots \cap X_n)$.
\end{theorem}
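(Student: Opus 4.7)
The plan is to deduce Theorem~\ref{Tropicallifting2} from the pairwise lifting result of Theorem~\ref{Tropicallifting} by a short induction on $n$. Each $\T(X_i)$ is a tropical hypersurface of pure codimension one, and the hypothesis that $\u$ is isolated in $\bigcap_{i=1}^n \T(X_i)$ says exactly that this set has codimension $n = \sum_i \textup{codim}\,\T(X_i)$ in a neighborhood of $\u$; so the tropicalizations meet properly at $\u$ in the sense of Definition~\ref{intersectproperly}. The desired conclusion is $\u \in \T(X_1 \cap \cdots \cap X_n)$, and I propose to build this up one hypersurface at a time while maintaining the proper intersection hypothesis needed to invoke Theorem~\ref{Tropicallifting} at every stage.

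Concretely, I would induct on $k$, showing that for each $1 \le k \le n$ one has
\[
\u \in \T(X_1 \cap \cdots \cap X_k),
\]
and moreover that this tropicalization has codimension at least $k$ in a neighborhood of $\u$. The base case $k=1$ is trivial. For the inductive step, assume $\u \in \T(Y_k)$ with $Y_k := X_1 \cap \cdots \cap X_k$, and that $\T(Y_k)$ has codimension $\geq k$ near $\u$. Because $\T(Y_k) \subseteq \T(X_1) \cap \cdots \cap \T(X_k)$ and $\T(X_{k+1})$ is a codimension-one polyhedral complex, the intersection $\T(Y_k) \cap \T(X_{k+1})$ is contained in $\bigcap_{i=1}^{k+1}\T(X_i)$, whose local codimension at $\u$ is at least $k+1$ by the isolated-point hypothesis on the full intersection. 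Hence $\T(Y_k)$ and $\T(X_{k+1})$ meet properly at $\u$, and Theorem~\ref{Tropicallifting} applies to give $\u \in \T(Y_k \cap X_{k+1}) = \T(Y_{k+1})$, with the induced codimension bound propagating to the next step. After $n-1$ iterations, we arrive at $\u \in \T(X_1 \cap \cdots \cap X_n)$, which is the claim.

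The main obstacle, and the reason one really needs Theorem~\ref{Tropicallifting} in the form of Osserman--Payne rather than a naive iteration of lifting for hypersurfaces, is that $Y_k$ for $k \geq 2$ need not be a hypersurface: it can drop dimension, acquire components, or have an arithmetically complicated tropicalization. In particular, we have no a priori description of $\T(Y_k)$ comparable to Proposition~\ref{explicitdescriptionoftropicalizationoffunction}, so the proper-intersection condition at $\u$ must be read off purely from the ambient codimension count rather than from an explicit defining equation. The delicate input is therefore the stability of proper intersection under the inductive step, which I would prove by noting that codimension can only increase when passing from $\bigcap_{i\le k}\T(X_i)$ to $\T(Y_k)$ and then arguing locally at $\u$ that $\textup{codim}_\u \T(Y_k) + \textup{codim}_\u \T(X_{k+1}) \leq \textup{codim}_\u\bigl(\T(Y_k) \cap \T(X_{k+1})\bigr)$. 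Assuming this local codimension bookkeeping, the iterative invocation of Theorem~\ref{Tropicallifting} completes the argument.
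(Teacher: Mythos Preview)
The paper does not prove Theorem~\ref{Tropicallifting2}; it is quoted from Osserman--Payne \cite{OP} and used as a black box, so there is no in-paper argument to compare your proposal against. Your attempt must therefore be judged on its own.

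The inductive scheme is the natural one, and your last paragraph correctly flags where the difficulty lies, but the gap is real and is not closed by what you wrote. The failing step is the assertion that $\bigcap_{i=1}^{k+1}\T(X_i)$ has local codimension at least $k+1$ at $\u$ ``by the isolated-point hypothesis on the full intersection.'' The hypothesis controls only the codimension of $\bigcap_{i=1}^{n}\T(X_i)$; it says nothing directly about partial intersections. To descend from the full statement to the partial one you would need to know that intersecting with a single tropical hypersurface lowers the local dimension by at most one, so that the only way to reach dimension $0$ after $n$ steps is to drop by exactly one at each step. That dimension-drop bound holds when the object being cut is a \emph{balanced} tropical variety, but the bare set $\bigcap_{i\le k}\T(X_i)$ is in general only a polyhedral complex with no balancing condition, so the bound is not available as stated. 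A second, related issue is that Definition~\ref{intersectproperly} compares the codimension of $\T(Y_k)\cap\T(X_{k+1})$ with $\textup{codim}\,Y_k+\textup{codim}\,X_{k+1}$, i.e.\ with the algebraic codimension of $Y_k$; you have not argued that $Y_k=X_1\cap\cdots\cap X_k$ is pure of codimension $k$, and your proposed fix (``codimension can only increase when passing from $\bigcap_{i\le k}\T(X_i)$ to $\T(Y_k)$'') runs in the wrong direction for what is needed. In \cite{OP} the $n$-fold lifting is not obtained by a bare iteration of the pairwise Theorem~\ref{Tropicallifting}; the multi-factor case is handled directly, with exactly this dimension bookkeeping absorbed into their analysis of initial degenerations, which is why it appears as a separate theorem rather than a corollary.
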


\subsection{Lifting of tropicalizations to balanced fibers}

Let $X$ denote the compact toric manifold determined by a moment polytope $P$.  As in ~\eqref{descriptionofpolytope}, the moment polytope $P$ has the unique description 
$$
P = \bigcap_{j=1}^m \left\{ \u \in M_\R : l_j (\u) \geq 0 \right\}
$$
where $l_j (\u) := \langle \u, \v_j \rangle - \lambda_j$. Based on Corollary~\ref{stronglybalancednondisplaceable}, we try to find positions $\u \in \text{Int}( P)$ on which the following system of equations admits a solution in $(\Lambda_U)^n$:
$$
y_i \frac{\pa \, \frak{PO}^\u}{\pa y_i} = 0, \quad i = 1, \cdots, n.
$$
Since the potential function $\frak{PO}^\u$ is in general not a Laurent polynomial, we cannot directly apply the results in Section~\ref{Lifting of tropicalizations to intersection points}. We instead consider the leading order potential function $\frak{PO}^\u_0$, which is always a Laurent polynomial, and we deal with the following system of equations 
\begin{equation}\label{logderivativeleadingorderpotentialfun}
y_i \frac{\pa \, \frak{PO}_0^\u}{\pa y_i} = 0, \quad i = 1, \cdots, n.
\end{equation}

Furthermore, to fit this story into Section~\ref{Lifting of tropicalizations to intersection points}, setting 
\begin{equation}\label{relationbetweenxandy}
x_i := y_i \cdot T^{u_i}
\end{equation}
and plugging $y_i := x_i \cdot T^{- u_i}$ into $\frak{PO}_0^\u$, we obtain the expression
\begin{equation}\label{potentialxvariable}
\frak{PO}_0^\u = \sum_{\v \in N} a_\v \x^\v.
\end{equation}
Note that each coefficient $a_\v \in \Lambda$ of $\x_\v$ does not depend on the position $\u$. To emphasize the non-dependence on positions of the coefficients in $\frak{PO}_0^\u$, the superscript $\u$ in the potential function will be omitted when regarding the potential function as a Laurent polynomial with respect to $\x$:
$$
\frak{PO}_0(\x) := \sum_{\v \in N} a_\v \x^\v.
$$
We then have the following relation between critical points with resepect to $\x$ and critical points with resepect to $\y$. 

\begin{lemma}\label{yixicritialpoints} The system of equations in ~\eqref{logderivativeleadingorderpotentialfun} has a solution $\{(y_i) : y_i \in \Lambda_U\}$ for $\u \in \textup{Int}(P)$ if any only if the system of equations
$$
x_i \frac{\pa \frak{PO}_0}{\pa x_i} = 0, \quad i = 1, \cdots, n.
$$
has a solution $\{(x_i) : x_i \in \Lambda^* \}$ for $\frak{v}_T^n (\x) = \u \in \textup{Int}(P)$. 
\end{lemma}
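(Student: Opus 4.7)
The plan is to exploit the change of variables \eqref{relationbetweenxandy}, namely $x_i = y_i \cdot T^{u_i}$ with inverse $y_i = x_i \cdot T^{-u_i}$, and to verify that it implements the desired equivalence at the level of both parameter spaces and defining equations simultaneously.

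First I would establish the bijection of parameter sets. Since $T^{u_i} \in \Lambda^*$ has valuation $u_i$, multiplication by $T^{u_i}$ identifies $\Lambda_U = \frak{v}_T^{-1}(0)$ with the valuation fiber $\frak{v}_T^{-1}(u_i)$. Taking the product over $i$, one obtains a bijection between $(\Lambda_U)^n$ and the set $\{\x \in (\Lambda^*)^n : \frak{v}_T^n(\x) = \u\}$ sending $(y_i)$ to $(y_i T^{u_i})$.

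Next I would match the two systems of equations under this bijection. The chain rule gives $\pa / \pa y_i = T^{u_i} \, \pa / \pa x_i$, which upon multiplication by $y_i = x_i T^{-u_i}$ yields the logarithmic-derivative identity
$$
y_i \, \frac{\pa}{\pa y_i} \;=\; x_i \, \frac{\pa}{\pa x_i}.
$$
Combined with the fact that $\frak{PO}_0(\x)$ is defined precisely as $\frak{PO}_0^\u(\y)$ after performing the substitution $y_i = x_i T^{-u_i}$ (see \eqref{potentialxvariable}), this gives
$$
y_i \, \frac{\pa \frak{PO}_0^\u}{\pa y_i}(\y) \;=\; x_i \, \frac{\pa \frak{PO}_0}{\pa x_i}(\x)
$$
whenever $x_i = y_i T^{u_i}$. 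Hence simultaneous vanishing of the left-hand sides is equivalent to simultaneous vanishing of the right-hand sides, which is exactly the asserted equivalence.

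The lemma is essentially a bookkeeping statement, and no genuine obstacle is anticipated. The only point worth double-checking is that $\frak{PO}_0(\x)$ is indeed $\u$-independent, as \eqref{potentialxvariable} implicitly claims: plugging $y_i = x_i T^{-u_i}$ into $\frak{PO}_0^\u(\y) = \sum_j \y^{\v_j} T^{l_j(\u)}$ yields $\sum_j \x^{\v_j} T^{l_j(\u) - \langle \u, \v_j \rangle} = \sum_j \x^{\v_j} T^{-\lambda_j}$, which has no $\u$-dependence and thus justifies writing the new Laurent polynomial simply as $\frak{PO}_0(\x)$.
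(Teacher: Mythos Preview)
Your argument is correct: the change of variables $x_i = y_i T^{u_i}$ bijects $(\Lambda_U)^n$ onto $\{\x \in (\Lambda^*)^n : \frak{v}_T^n(\x)=\u\}$ and preserves the logarithmic derivatives, so the two systems are equivalent exactly as you describe. The paper states this lemma without proof, evidently regarding it as an immediate consequence of the substitution \eqref{relationbetweenxandy}, so your write-up is in the same spirit and simply makes the bookkeeping explicit.
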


The following lemma roughly says that inside of the polytope $P$ the leading order potential function completely determines the tropicalization of potential function.

\begin{lemma}\label{tropicalizationofpotentialleading}
Let $P$ be a polytope. For any sufficiently large integer $N$, two tropicalizations $\Trop \left(x_i \frac{\pa \frak{PO}_0}{\pa x_i}\right) \cap P$ and $\Trop \left( x_i \frac{\pa \frak{PO}}{\pa x_i} \textup{ mod } T^N \right) \cap P$ coincide as a set.
\end{lemma}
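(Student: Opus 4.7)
The plan is to establish the stronger pointwise statement that on $P$ the two piecewise linear functions $\trop^{x_i \pa \frak{PO}_0 / \pa x_i}$ and $\trop^{x_i \pa \frak{PO}/\pa x_i \;\textup{mod}\; T^N}$ take identical values and share the same min-achieving monomials, which forces their non-differentiable loci to coincide inside $P$. The key input is Theorem~\ref{differenceofpotentialfunction}, which gives the expansion
\[
\frak{PO} - \frak{PO}_0 \;=\; \sum_{k} \frak{P}_k(\z)\, T^{\rho_k}, \qquad \frak{P}_k(\z) = c_k \prod_{j'=1}^m z_{j'}^{a_{j',k}},
\]
with $c_k \in \Q^*$, $\sum_{j'} a_{j',k} \geq 1$, $\rho_k > 0$, and $z_{j'} = \x^{\v_{j'}} T^{-\lambda_{j'}}$ independent of $\u$.

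Applying $x_i \pa/\pa x_i$ as a derivation multiplies each monomial $\frak{P}_k$ by the scalar $c_{k,i} := \sum_{j'} a_{j',k}\, v_{j',i}$, so $h := x_i \pa (\frak{PO} - \frak{PO}_0)/\pa x_i$ is a sum over $k$ with $c_{k,i} \neq 0$. For each such $k$, the nonvanishing of $c_{k,i}$ forces existence of an index $j^*$ with $a_{j^*, k} \geq 1$ \emph{and} $v_{j^*, i} \neq 0$. Since $\frak{v}_T(z_{j'}) = l_{j'}(\u) \geq 0$ for $\u \in P$, the core estimate is
\[
\sum_{j'} a_{j',k}\, l_{j'}(\u) \;\geq\; a_{j^*,k}\, l_{j^*}(\u) \;\geq\; l_{j^*}(\u) \;\geq\; \min_{j\,:\,v_{j,i}\neq 0} l_j(\u) \;=\; \trop^{x_i \pa \frak{PO}_0/\pa x_i}(\u),
\]
where the second inequality uses integrality $a_{j^*, k} \geq 1$. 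Combined with $\rho_k > 0$, the $k$-th $h$-monomial contributes the value $\geq \rho_k + \trop^{x_i \pa \frak{PO}_0/\pa x_i}(\u)$ at $\u$, \emph{strictly} greater than $\trop^{x_i \pa \frak{PO}_0/\pa x_i}(\u)$.

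Hence no $h$-monomial ever attains the minimum defining $\trop^{x_i \pa \frak{PO}/\pa x_i}(\u)$ on $P$, and for each $\v_j$ attaining the $f$-minimum the $f$-contribution $v_{j,i} T^{-\lambda_j}$ to the coefficient of $\x^{\v_j}$ strictly dominates any $h$-contribution in $T$-valuation, so no cancellation alters the coefficient valuation. The two piecewise linear functions therefore agree on $P$ with the same min-achievers, and their non-differentiable loci coincide there. Any integer $N > \max_j(-\lambda_j)$ ensures that truncation $\textup{mod}\; T^N$ preserves every term of $x_i \pa \frak{PO}_0/\pa x_i$ intact while possibly discarding only $h$-terms; by the above, discarded $h$-terms had no influence on the tropicalization in $P$, so the equality persists.

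The main obstacle is securing the privileged index $j^*$ in the key estimate. The naive bound $\sum_{j'} a_{j',k}\, l_{j'}(\u) \geq (\sum_{j'} a_{j',k}) \min_{j'} l_{j'}(\u)$ is too weak, because the unrestricted minimum $\min_{j'} l_{j'}(\u)$ can drop strictly below $\trop^{x_i \pa \frak{PO}_0/\pa x_i}(\u) = \min_{j : v_{j,i} \neq 0} l_j(\u)$ — for instance at points on a facet whose inward normal $\v_{j'}$ satisfies $v_{j', i} = 0$. Extracting $j^*$ from the expansion of $c_{k,i} = \sum_{j'} a_{j',k} v_{j',i}$ and using integrality of $a_{j^*, k} \geq 1$ is what bridges the two minima.
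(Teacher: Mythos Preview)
Your proof is correct and follows essentially the same strategy as the paper: use Theorem~\ref{differenceofpotentialfunction} to show that every correction monomial has $T$-valuation on $P$ strictly exceeding the minimum coming from $\frak{PO}_0$, so it never participates in the tropicalization. The paper's version records the inequality $\frak{v}_T\bigl(T^{\rho_k} z_{j_1}^{k_1}\cdots z_{j_r}^{k_r}\bigr) > \frak{v}_T(z_{j_i})$ for each factor and passes directly to the conclusion; your version is more explicit in isolating an index $j^*$ with both $a_{j^*,k}\geq 1$ and $v_{j^*,i}\neq 0$, which is exactly what is needed to compare against the restricted minimum $\min_{j:\,v_{j,i}\neq 0} l_j(\u)$ governing $\Trop\bigl(x_i\,\partial\frak{PO}_0/\partial x_i\bigr)$, and you also note why cancellation among coefficients of a common $\x^{\v}$ cannot spoil the comparison. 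These refinements sharpen a point the paper leaves implicit, but the underlying argument is the same.
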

\begin{proof}
We need to show that any terms in the difference $\frak{PO} - \frak{PO}_0$ do not contribute to the tropicalizations. By Theorem~\ref{differenceofpotentialfunction}, one can regard $\frak{PO} (\textup{mod } T^N)  - \frak{PO}_0$ as a polynomial with respect to $\{ z_j := \y^{\v_j} T^{l_j(\u)} = \x^{\v_j} T^{-\lambda_j} : j = 1, \cdots, m \}$. Whenever $\frak{v}_T^n (\x) \in \textup{Int}(P)$ (implying $\frak{v}_T (z_j) > 0$ for any $j$), any monomial of the form $T_{}^{\rho_k} z_{j_1}^{k_1} \cdots z_{j_r}^{k_r}$ with $r \in \N, k_j \in \N$ and $\rho_k > 0$ satisfies
\begin{equation}\label{difference1}
\frak{v}_T \left( T_{}^{\rho_k} z_{j_1}^{k_1} \cdots z_{j_r}^{k_r} \right) >  \frak{v}_T \left( z_{j_i} \right)
\end{equation}
for each $i$ with $1 \leq i \leq r$. Since $P$ is compact, we can take a sufficiently large integer $N$ so that $N$ is greater than $\sup \left( \max_{j} \frak{v}_T (z_j) \right)$ where the supremum is taken over all $(z_1, \cdots, z_m)$ satisfying $\frak{v}_T^n (\x) \in P$. Such a choice of $N$ guarantees the presence of all $z_j$'s in $\frak{PO}_0 \textup{ mod } T^N$ so that any terms in the difference do not involve in $\Trop \left( x_i \frac{\pa \frak{PO}}{\pa x_i} \textup{ mod } T^N \right)$ because of ~\eqref{difference1}.
\end{proof}

We then obtain a corollary of Thoerem~\ref{Tropicallifting2}.
\begin{corollary}\label{Tropicaliftingbalanced}
If $\u \in \textup{Int}(P)$ is an isolated point of the intersection of tropicalizations $\Trop \left(x_1 \frac{\pa \frak{PO}_0}{\pa x_1 }\right),\cdots, \Trop \left(x_n \frac{\pa \frak{PO}_0}{\pa x_n }\right)$, then the fiber $L(\u)$ is balanced (See Definition~\ref{balanced}) and hence non-displaceable.

If in addition the potential function $\frak{PO}$ is a Laurent \emph{polynomial}, then the fiber $L(\u)$ is strongly balanced \ulp See Definition~\ref{stronglybalanced}\urp and hence non-displaceable.  
\end{corollary}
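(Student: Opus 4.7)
The plan is to apply the tropical lifting theorem (Theorem~\ref{Tropicallifting2}) to the hypersurfaces cut out by the logarithmic partial derivatives of $\frak{PO}$, after identifying their tropicalizations with those of $\frak{PO}_0$ via Lemma~\ref{tropicalizationofpotentialleading}, and then translate the resulting intersection point to a critical point of $\frak{PO}^\u$ via the coordinate change $y_i = x_i T^{-u_i}$ as in Lemma~\ref{yixicritialpoints}.

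I would first handle the Laurent polynomial case. If $\frak{PO}$ is a polynomial in $\x$, then each $x_i \frac{\pa \frak{PO}}{\pa x_i}$ is a genuine Laurent polynomial cutting out a hypersurface $X_i \subset (\Lambda^*)^n$. For $\u$ in a bounded subset of $\textup{Int}(P)$, only finitely many terms of each Novikov coefficient contribute to the tropical minimum, so by Lemma~\ref{tropicalizationofpotentialleading} the tropicalization $\T(X_i)$ agrees locally near $\u$ with $\Trop\!\left(x_i \frac{\pa \frak{PO}_0}{\pa x_i}\right)$. The hypothesis therefore makes $\u$ an isolated point of $\bigcap_i \T(X_i)$, and Theorem~\ref{Tropicallifting2} furnishes $\x \in (\Lambda^*)^n$ with $\frak{v}_T^n(\x) = \u$ and $x_i \frac{\pa \frak{PO}}{\pa x_i}(\x) = 0$ for all $i$. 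Setting $y_i := x_i T^{-u_i}$ places $\y$ in $(\Lambda_U)^n$, and the identity $y_i \frac{\pa \frak{PO}^\u}{\pa y_i} = x_i \frac{\pa \frak{PO}}{\pa x_i}$ (immediate from the chain rule and $\frak{PO}^\u(\y) = \frak{PO}(\x)$) shows that $\y$ is a critical point of $\frak{PO}^\u$ on $(\Lambda_U)^n$. Hence $L(\u)$ is strongly balanced, and non-displaceability follows from Corollary~\ref{stronglybalancednondisplaceable}.

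In the general case, $\frak{PO}$ is only a formal power series in $\x$, so the preceding strategy must be applied to the truncations $f_{i,N} := x_i \frac{\pa \frak{PO}}{\pa x_i} \bmod T^N$ for each sufficiently large $N$. These are Laurent polynomials whose tropicalizations coincide with $\Trop\!\left(x_i \frac{\pa \frak{PO}_0}{\pa x_i}\right)$ inside $P$ by Lemma~\ref{tropicalizationofpotentialleading}, so $\u$ remains isolated in $\bigcap_i \Trop(f_{i,N}) \cap \textup{Int}(P)$ and Theorem~\ref{Tropicallifting2} yields $\x^{(N)} \in (\Lambda^*)^n$ with $\frak{v}_T^n(\x^{(N)}) = \u$ and $x_i^{(N)} \frac{\pa \frak{PO}}{\pa x_i}(\x^{(N)}) \equiv 0 \pmod{T^N}$. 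The corresponding $\y^{(N)} \in (\Lambda_U)^n$ determines a cochain $b_N \in H^1(L(\u); \Lambda_0)$ which is a critical point of $\frak{PO}^\u$ modulo $T^N$, and by the mod-$T^N$ analogue of Theorem~\ref{criticalandnontrivial} this produces an isomorphism $HF^\bullet\bigl((L(\u), b_N), (L(\u), b_N); \Lambda_0/T^N\bigr) \simeq H^\bullet(T^n; \Lambda_0/T^N)$. Taking the constant sequences $P^{(i)} = P$, $\omega^{(i)} = \omega$, $\u^{(i)} = \u$ in Definition~\ref{balanced} with these cochains exhibits $L(\u)$ as balanced, and Theorem~\ref{balancedisnondisplaceable} gives non-displaceability.

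The principal delicate point is the passage from a critical point of $\frak{PO}^\u$ modulo $T^N$ to the mod-$T^N$ Floer cohomology isomorphism required by item (4) of Definition~\ref{balanced}; this rests on a truncated version of Theorem~\ref{criticalandnontrivial}, which should follow by running the $A_\infty$ formalism from Section~\ref{Ainftyalgebrasontoricfibers} over $\Lambda_0/T^N$ and using that, for toric fibers, $\frak{m}^1_{b_N}$ is controlled by the partial derivatives of $\frak{PO}^\u$ at $b_N$. A secondary check --- that the isolatedness of $\u$ is uniformly preserved through the truncations $f_{i,N}$ for all sufficiently large $N$ --- is straightforward from Lemma~\ref{tropicalizationofpotentialleading} inside a bounded neighborhood of $\u$ in $\textup{Int}(P)$.
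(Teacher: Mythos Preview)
Your proposal is correct and follows essentially the same argument as the paper: identify the tropicalizations of $x_i\frac{\pa\frak{PO}}{\pa x_i}\bmod T^N$ with those of $x_i\frac{\pa\frak{PO}_0}{\pa x_i}$ via Lemma~\ref{tropicalizationofpotentialleading}, apply Theorem~\ref{Tropicallifting2} to lift the isolated point, convert to a critical point in $(\Lambda_U)^n$ via the $y_i=x_iT^{-u_i}$ substitution, invoke the mod-$T^N$ analogue of Theorem~\ref{criticalandnontrivial}, and use the constant sequence in Definition~\ref{balanced}. The only cosmetic difference is that you treat the Laurent-polynomial case first while the paper handles the general case first and then observes that the truncation step is unnecessary when $\frak{PO}$ is already a polynomial.
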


\begin{proof}
We take the constant sequence of triples $(\omega^{(i)}, P^{(i)}, \u^{(i)})$ by setting $\omega^{(i)} := \omega$, $P^{(i)} := P$ and $\u^{(i)} := \u$ for all $i$. For sufficiently large interger $N$, two tropicalizations $\Trop \left(x_i \frac{\pa \frak{PO}_0}{\pa x_i}\right) \cap P$ and $\Trop \left( x_i \frac{\pa \frak{PO}}{\pa x_i} \textup{ mod } T^N \right) \cap P$ coincide by Lemma~\ref{tropicalizationofpotentialleading}. Then, the point $\u$ is still an isolated point of the intersection of $\Trop \left(x_1 \frac{\pa \frak{PO}_0}{\pa x_1 } \textup{ mod } T^N \right),\cdots, \Trop \left(x_n \frac{\pa \frak{PO}_0}{\pa x_n } \textup{ mod } T^N \right)$. By Theorem~\ref{Tropicallifting2} and Lemma~\ref{yixicritialpoints}, there is a critical point of $\frak{PO} \textup{ mod } T^N$ with respect to $\y$. By the modulo $N$ version of Theorem~\ref{criticalandnontrivial}, we conclude that 
$$
HF((L(\u^{(i)}), b^{(i)}_N), (L(\u^{(i)}), b^{(i)}_N); \Lambda_0 / T^N ) \simeq H(T^n; \Lambda_0 / T^N).
$$
Therefore, the fiber $L(\u)$ is balanced. Finally, non-displacement of $L(\u)$ follows from Theorem~\ref{balancedisnondisplaceable}. 

If the potential function $\frak{PO}$ is a Laurent polynomial, we can directly apply Theorem~\ref{Tropicallifting2} without modulo $N$ argument. 

\end{proof}

Note that the cases satisfying $\frak{PO}$ is a Laurent polynomial in Corollary~\ref{Tropicaliftingbalanced} include more than the Fano cases in which the potential function $\frak{PO}$ is exactly same as the leading order potential function $\frak{PO}_0$. According to results in Chan-Lau \cite{CL} and Auroux \cite{Au}, all semi-Fano surfaces and Hirzebruch surface of degree $3$ are examples satisfying the additional assumption. 

\subsection{Tropicalizations of $P$ relative to $\m$}
Let $\frak{PO}_0^\u$ be the leading order potential function at $\u$ of $P$. Putting $\frak{PO}_0^\u (\x) := \sum_{\v \in N} a_\v \x^\v$, for a lattice point $\m \in M$ we consider the piecewise-linear function 
\begin{align*}
\trop^{P , \, \frak{m}} : M_\R \to \R, \quad \u \mapsto &\min \left\{ \frak{v}_T (a_\v) + \langle \u , \v \rangle :  \v \in N \text{ such that } \langle \m, \v \rangle \neq 0  \right\} \\
&= \min \left\{ l_j (\u) : \langle \m, \v_j \rangle \neq 0 \right\}.
\end{align*}

\begin{definition}\label{tropicalizationexcepting}
The \emph{tropicalization} of $P$ \emph{relative to} $\m$ is defined to be the non-differentiable locus of $\trop^{P, \, \frak{m}}$ and is denoted by $\Trop \left(P, \, \m \right)$. For simplicity, we put $\Trop \left(\m \right) := \Trop \left(P, \, \m \right)$ unless the omission causes any confusion. 
\end{definition}

The tropicalization of $P$ relative to $\m$ can be understood as the tropicalization of the logarithmic derivative of the leading order potential function $\frak{PO}_0$ with respect to the direction $\m$. It naturally generalizes  $\Trop \left( {x_i \frac{\pa \frak{PO}_0}{\pa x_i}} \right)$ because $\Trop \left( {x_i \frac{\pa \frak{PO}_0}{\pa x_i}} \right) = \Trop (P, \, \textbf{e}_i)$ where $\textbf{e}_i$ is a member of the standard unit vectors. 

It is worthwhile to mention explicit description of $\Trop \left(P , \, \m \right)$ which will be frequently used in Section~\ref{ProofA} and Section~\ref{ProofB}.
\begin{proposition}\label{descriptionoftropcurve}
The tropicalization $\Trop \left(P, \, \m \right)$ is the collection of points $\u \in M_\R$ on which the minimum of $\trop^{P , \, \frak{\m}}$ is attained by at least two $l_{j_1}, l_{j_2}$  with $j_1 \neq j_2$ so that  $\langle \m, \v_{j_1} \rangle \neq 0, \langle \m, \v_{j_2} \rangle \neq 0$ and
$$
\min_j \left\{ l_j (\u) :  \langle \m, \v_j \rangle \neq 0  \right\} = l_{j_1}(\u) = l_{j_2}(\u).
$$
\end{proposition}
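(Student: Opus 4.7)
The plan is to verify that the non-differentiable locus of the piecewise-linear concave function $\trop^{P, \m}$, which is defined as the pointwise minimum of finitely many affine linear functions $\{l_j : \langle \m, \v_j \rangle \neq 0\}$, coincides with the set where the minimum is attained by at least two distinct such indices. This is the standard description of the ``corner locus'' of a tropical polynomial, exactly parallel to Proposition~\ref{explicitdescriptionoftropicalizationoffunction}, but adapted to the moment polytope setting where the coefficients $\frak{v}_T(a_\v)$ of the associated Laurent polynomial expression specialize to $-\lambda_j$ and recover the affine functions $l_j$. I would prove the two inclusions separately by elementary continuity arguments.

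For the inclusion that any $\u$ attaining the minimum with two distinct indices lies in $\Trop(P, \m)$: suppose $l_{j_1}(\u) = l_{j_2}(\u) = \trop^{P, \m}(\u)$ with $j_1 \neq j_2$ and both $\langle \m, \v_{j_i} \rangle \neq 0$. The non-redundancy and primitivity conditions imposed in \eqref{descriptionofpolytope} guarantee that the inward normals $\v_{j_1}$ and $\v_{j_2}$ are distinct, so we may choose $\w \in M_\R$ with $\langle \w, \v_{j_1} \rangle \neq \langle \w, \v_{j_2} \rangle$. Restricting $\trop^{P, \m}$ to the line $\u + t\w$ near $t = 0$, the two affine functions $l_{j_1}, l_{j_2}$ have distinct slopes along $\w$ and both realize the minimum at $t = 0$, so the one-sided directional derivatives differ. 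Hence $\trop^{P, \m}$ fails to be differentiable at $\u$.

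For the converse inclusion, suppose the minimum at $\u$ is attained by a unique index $j_0$, that is, $l_{j_0}(\u) < l_j(\u)$ for every other $j$ with $\langle \m, \v_j \rangle \neq 0$. Since each $l_j$ is continuous and there are finitely many of them, there exists an open neighborhood $U$ of $\u$ on which the strict inequality $l_{j_0}(\u') < l_j(\u')$ persists for all such $j$. Consequently $\trop^{P, \m}$ agrees with the single affine function $l_{j_0}$ on $U$, so it is differentiable at $\u$, which means $\u \notin \Trop(P, \m)$.

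There is no substantial obstacle here; this is a direct unpacking of Definition~\ref{tropicalizationexcepting} and the argument essentially mirrors the proof of Proposition~\ref{explicitdescriptionoftropicalizationoffunction} via the identification \eqref{relationbetweenxandy} between the $\x$-variable tropicalization of the leading order potential and the affine functions $l_j$ defining the polytope. The only point worth flagging is the use of the uniqueness clauses in \eqref{descriptionofpolytope} to ensure that distinct facet indices correspond to distinct normal vectors, which is what forces the genuine kink whenever two indices tie for the minimum.
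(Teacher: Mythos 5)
The paper states Proposition~\ref{descriptionoftropcurve} without any proof, implicitly treating it as the obvious analogue of Proposition~\ref{explicitdescriptionoftropicalizationoffunction} for the truncated family of affine functions $\{l_j : \langle \m, \v_j\rangle \neq 0\}$; your two-inclusion corner-locus argument correctly supplies the missing justification. One small step worth tightening: in the forward inclusion, the one-sided directional derivatives of the piecewise-linear minimum along $\w$ are governed by the \emph{entire} set $J$ of indices achieving the minimum at $\u$, not just $j_1$ and $j_2$ — they equal $\min_{j\in J}\langle\w,\v_j\rangle$ and $\max_{j\in J}\langle\w,\v_j\rangle$ from the two sides, for $|t|$ small enough that the inactive $l_j$'s stay strictly above. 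Since $j_1,j_2\in J$ and $\langle\w,\v_{j_1}\rangle\neq\langle\w,\v_{j_2}\rangle$, these two quantities still differ, so your conclusion stands; the fact that the $\v_j$ are pairwise distinct (hence $\v_{j_1}\neq\v_{j_2}$) does indeed follow from the non-redundancy requirement in \eqref{descriptionofpolytope}, as you note.
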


For later purpose, we keep one more notation. Let $Q$ be a polyhedron, which is the intersection of finitely many closed half-spaces.  At least formally, using ~\eqref{leadingorderpotentialfunctionofP} and ~\eqref{relationbetweenxandy}, the leading order potential function $\frak{PO}_0$ of $Q$ can be written down from the defining equations of the plane containing the facets of $Q$. 

\begin{definition}
The \emph{tropicalization} of $Q$ is defined to be the tropicalization of $\frak{PO}_0$ (See Definition~\ref{tropicalizationofequation}) and is denoted by $\Trop(Q)$. 
\end{definition}

Note that the tropicalization $\Trop(P, \m)$ relative to $\m$ is the tropicalization of the polyhedron $Q$ determined by $\left\{ l_j (\u) :  \langle \m, \v_j \rangle \neq 0 \right\}$.

\section{Examples}\label{examples}

\begin{example}\label{Onepointblowup1}
Let $X$ be the one-point blowup of $\C\P^2$ determined by the moment polytope
$$
P = \left\{ (u_1, u_2) \in M_\R :   u_1 \geq 0,\, u_2 \geq 0,\, 1 - u_1 - u_2 \geq 0,\, c - u_2 \geq 0 \right\}.
$$
where $c$ is a real number with $0 < c < 1$. In \cite{FOOOToric1}, Fukaya-Oh-Ohta-Ono showed how the positions of toric fibers having a non-trivial Floer cohomology change as $c$ varies (Figure~\ref{NondisfiberoneptblowupCP2}). 

\begin{figure}[h]
\centering
\begin{tikzpicture}
 \def\size{0.75}
 \def\trans{5}

 \coordinate (A) at (0, 0);
 \coordinate (B) at (4 * \size, 0);
 \coordinate (C) at (4/3 * \size, 8/3 *\size);
 \coordinate (D) at (0, 8/3 * \size);

 \draw [very thick] (A) -- (B) -- (C) -- (D)-- (A);
 \node [black] at (4/3 * \size, 4/3 * \size) {\tiny$\bullet$}; 

 \coordinate (A) at (0- \trans, 0);
 \coordinate (B) at (4 * \size- \trans, 0);
 \coordinate (C) at (1 * \size- \trans, 3 *\size);
 \coordinate (D) at (0- \trans, 3 * \size);

 \draw [very thick] (A) -- (B) -- (C) -- (D)-- (A);
 \node [black] at (4/3 * \size- \trans, 4/3 * \size) {\tiny$\bullet$}; 
 \node [black] at (\size- \trans, 2 * \size) {\tiny$\bullet$}; 

 \coordinate (A) at (0+ \trans, 0);
 \coordinate (B) at (4 * \size+ \trans, 0);
 \coordinate (C) at (2 * \size+ \trans, 2 *\size);
 \coordinate (D) at (0 + \trans, 2 * \size);

 \draw [very thick] (A) -- (B) -- (C) -- (D)-- (A);
 \node [black] at (3/2 * \size+ \trans, \size) {\tiny$\bullet$}; 
\end{tikzpicture}
\caption{Non-displaceable toric fibers: $c< \frac{1}{3}, \, c = \frac{1}{3}, \, c > \frac{1}{3}$ }\label{NondisfiberoneptblowupCP2}
\end{figure}
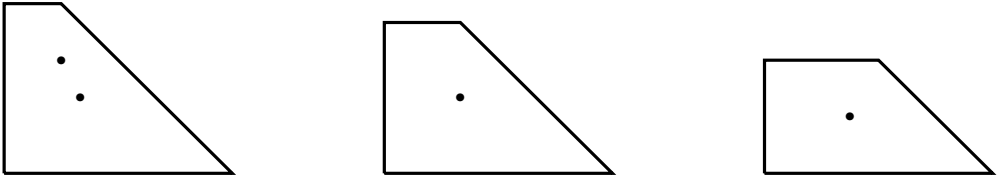 

In this case, the following tropicalizations intersect properly regardless of the value $c$ as seen in Figure~\ref{OnepointblowupFig}:
$$
\Trop \left( x_1 \frac{\pa \frak{PO}_0}{\pa x_1} \right) \, \cap \, \Trop \left( x_2 \frac{\pa \frak{PO}_0}{\pa x_2} \right) \cap \textup{ Int} (P)
$$ 
and thus Corollary~\ref{Tropicaliftingbalanced} can be applied. Therefore, the toric fibers over the intersection are non-displaceable and the intersection exactly locates strongly balanced fibers since $X$ is Fano. Moreover, the tropicalizations illustrates how the number of non-displaceable toric fibers changes as a $c$ changes.

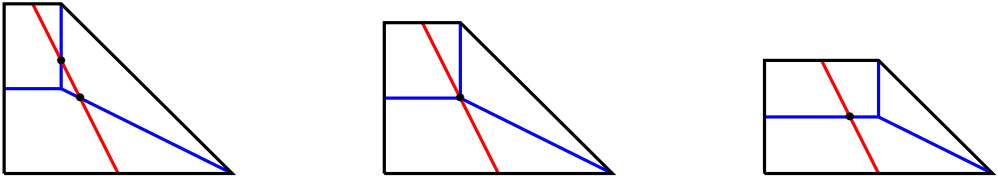
\begin{figure}[h]
\centering
\begin{tikzpicture}
 \def\size{0.75}
 \def\trans{5}

 \coordinate (A) at (0, 0);
 \coordinate (B) at (4 * \size, 0);
 \coordinate (C) at (4/3 * \size, 8/3 *\size);
 \coordinate (D) at (0, 8/3 * \size);
 \coordinate (E) at (4/3 * \size , 4/3 * \size);
 \coordinate (G) at (8/3 * \size, 4/3 * \size);
 \coordinate (H) at (0, 4/3 * \size);
 \coordinate (I) at (2/3 * \size, 8/3 * \size);
 \coordinate (J) at (2 * \size, 0);  

 \draw [very thick, color=blue] (E) -- (H) ;
 \draw [very thick, color=blue] (E) -- (C) ;
 \draw [very thick, color=blue] (E) -- (B) ;
 \draw [very thick, color=red] (I) -- (J) ;
 \node [black] at (4/3 * \size, 4/3 * \size) {\tiny$\bullet$}; 
 \draw [very thick] (A) -- (B) -- (C) -- (D)-- (A); 

 \coordinate (A) at (0- \trans, 0);
 \coordinate (B) at (4 * \size- \trans, 0);
 \coordinate (C) at (1 * \size- \trans, 3 *\size);
 \coordinate (D) at (0- \trans, 3 * \size); 
 \coordinate (E) at (3/2 * \size- \trans , 3/2 * \size);
 \coordinate (E1) at (1 * \size- \trans , 3/2 * \size);
 \coordinate (G) at (5/2 * \size- \trans, 3/2 * \size);
 \coordinate (H) at (0- \trans, 3/2 * \size);
 \coordinate (I) at (1/2 * \size- \trans, 3 * \size);
 \coordinate (J) at (2 * \size- \trans, 0);  
 
 \draw [very thick, color=blue] (E1) -- (H) ;
 \draw [very thick, color=blue] (E1) -- (C) ;
 \draw [very thick, color=blue] (E1) -- (B) ;
 \draw [very thick, color=red] (I) -- (J) ;
 \node [black] at (4/3 * \size- \trans, 4/3 * \size) {\tiny$\bullet$}; 
 \node [black] at (\size- \trans, 2 * \size) {\tiny$\bullet$}; 
\draw [very thick] (A) -- (B) -- (C) -- (D)-- (A); 

 \coordinate (A) at (0+ \trans, 0);
 \coordinate (B) at (4 * \size+ \trans, 0);
 \coordinate (C) at (2 * \size+ \trans, 2 *\size);
 \coordinate (D) at (0 + \trans, 2 * \size);
 \coordinate (E) at (\size+ \trans , \size);
 \coordinate (E1) at (2 * \size+ \trans , \size);
 \coordinate (G) at (3* \size+  \trans, \size);
 \coordinate (H) at (0 + \trans, \size);
 \coordinate (I) at ( \size + \trans, 2 * \size);
 \coordinate (J) at (2 * \size + \trans, 0);  
 
 \draw [very thick, color=blue] (E1) -- (H) ;
 \draw [very thick, color=blue] (E1) -- (C) ;
 \draw [very thick, color=blue] (E1) -- (B) ; 
 \draw [very thick, color=red] (I) -- (J) ;
 \node [black] at (3/2 * \size+ \trans, \size) {\tiny$\bullet$}; 
 \draw [very thick] (A) -- (B) -- (C) -- (D)-- (A); 

\end{tikzpicture}
\caption{The intersection of tropicalizations: $c< \frac{1}{3}, \, c = \frac{1}{3}, \, c > \frac{1}{3}$ }\label{OnepointblowupFig}
\end{figure}

\end{example}

\begin{example}\label{Twopointblowup0}
Let $X$ be the two-point blowup of $\C\P^2$ determined by the moment polytope
$$
P = \left\{ (u_1, u_2) \in M_\R :   u_1 \geq 0,\, -1/4 + u_1 + u_2 \geq 0,\, u_2 \geq 0,\, 1 - u_1 - u_2 \geq 0,\, 1/2 - u_2 \geq 0 \right\}.
$$
This is one of symplectic toric manifolds admitting a continuum family of non-displaceable toric fibers presented in \cite{FOOOToric2}. Specifically, Fukaya-Oh-Ohta-Ono proved that the fibers over $\{ (u_1, u_2) \in M_\R : \frac{1}{4} \leq u_1 \leq \frac{3}{8}, u_2 = \frac{1}{4}\}$ are non-displaceable as in Figure~\ref{NondisfibertwoptblowupCP2} (See also Wilson-Woodward \cite{WW} and Abreu-Macarini \cite{AM}). 

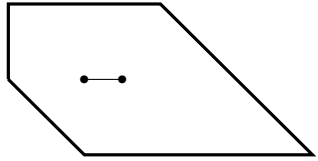
\begin{figure}[h]
\centering
\begin{tikzpicture}
 \def\size{1}
 \def\trans{5}

 \coordinate (A) at (0, \size);
 \coordinate (B) at (\size, 0);
 \coordinate (C) at (4 * \size, 0);
 \coordinate (D) at (2 * \size, 2 *\size);
 \coordinate (E) at (0, 2 * \size);
 \coordinate (F) at (1.5 * \size , \size);
 \coordinate (G) at (\size, \size);
 \coordinate (H) at (2 * \size, \size);
 \coordinate (I) at (0, 1.5 * \size);
 \coordinate (J) at (\size, 2 * \size);  
 \coordinate (K) at (0, \size);
 \coordinate (L) at (4 * \size, 0);
 \coordinate (M) at (3 * \size, \size);
 \coordinate (N) at (1/2 * \size, 1/2 * \size);

 \draw [thin, color=black] (G) -- (F) ;
 \node [black] at (\size, \size) {\tiny$\bullet$}; 
 \node [black] at (1.5* \size, \size) {\tiny$\bullet$}; 
 \draw [very thick] (A) -- (B) -- (C) -- (D)-- (E)-- (A);

\end{tikzpicture}
\caption{Non-displaceable toric fibers}\label{NondisfibertwoptblowupCP2}
\end{figure}

In this case, we cannot use Corollary~\ref{Tropicaliftingbalanced} because the tropicalizations do not intersect properly as in Figure~\ref{TwopointblowupFig0}, but yet we can apply Theorem~\ref{THEOREMA}. The primary subspaces for $P$ are $\langle (0,1)\rangle, \langle (1,0)\rangle$ and $\langle (1,1)\rangle$. By finding the intersection of the following tropicalizations: 
\begin{align*}
\Trop \left((0,1) \right) \cap \Trop \left((1,0) \right) \cap \Trop \left((1,1) \right) \cap \textup{ Int} (P),
\end{align*} 
we can detect non-displaceable toric fibers. Three tropicalizations are drawn in Figure~\ref{TwopointblowupFig} and the intersection of the tropicalizations is exactly $\{ (u_1, u_2) \in M_\R : \frac{1}{4} \leq u_1 \leq \frac{3}{8}, u_2 = \frac{1}{4}\}$ as desired.
\begin{figure}[h]
\centering
\begin{tikzpicture}
 \def\size{1}
 \def\trans{5}

 \coordinate (A) at (0, \size);
 \coordinate (B) at (\size, 0);
 \coordinate (C) at (4 * \size, 0);
 \coordinate (D) at (2 * \size, 2 *\size);
 \coordinate (E) at (0, 2 * \size);
 \coordinate (F) at (1.5 * \size , \size);
 \coordinate (G) at (\size, \size);
 \coordinate (H) at (2 * \size, \size);
 \coordinate (I) at (0, 1.5 * \size);
 \coordinate (J) at (\size, 2 * \size);  
 \coordinate (K) at (0, \size);
 \coordinate (L) at (4 * \size, 0);
 \coordinate (M) at (3 * \size, \size);
 \coordinate (N) at (1/2 * \size, 1/2 * \size);

 \draw [very thick, color=red] (G) -- (H) ;
 \draw [very thick, color=red] (H) -- (C) ;
 \draw [very thick, color=red] (H) -- (D) ;
 \draw [very thick, color=red] (G) -- (B) ;
 \draw [very thick, color=red] (G) -- (I) ;
 \draw [thin, color=black] (G) -- (F) ;
 \node [black] at (\size, \size) {\tiny$\bullet$}; 
 \node [black] at (1.5* \size, \size) {\tiny$\bullet$}; 
 \draw [very thick] (A) -- (B) -- (C) -- (D)-- (E);
 \draw [very thick, dashed] (E)-- (A);

 \coordinate (A) at (0 - \trans, \size);
 \coordinate (B) at (\size - \trans, 0);
 \coordinate (C) at (4 * \size - \trans, 0);
 \coordinate (D) at (2 * \size - \trans, 2 *\size);
 \coordinate (E) at (0 - \trans, 2 * \size);
 \coordinate (F) at (1.5 * \size - \trans , \size);
 \coordinate (G) at (\size - \trans, \size);
 \coordinate (H) at (2 * \size - \trans, \size);
 \coordinate (I) at (0 - \trans, 2 * \size);
 \coordinate (J) at (\size - \trans, 2 * \size);  
 \coordinate (K) at (0 - \trans, \size);
 \coordinate (L) at (2.5 * \size - \trans, 0);
 \coordinate (M) at (3 * \size - \trans, \size);
 \coordinate (N) at (1/2 * \size - \trans, 1/2 * \size);
 
 \draw [very thick, color=blue] (F) -- (J) ;
 \draw [very thick, color=blue] (F) -- (K) ;
 \draw [very thick, color=blue] (F) -- (L) ;
 \draw [thin, color=black] (G) -- (F) ;  
 \node [black] at (\size- \trans, \size) {\tiny$\bullet$}; 
 \node [black] at (1.5* \size- \trans, \size) {\tiny$\bullet$}; 
 \draw [very thick] (A) -- (B);
 \draw [very thick] (C) -- (D);
 \draw [very thick] (E)-- (A);
 \draw [very thick, dashed] (B) -- (C);
 \draw [very thick, dashed] (D) -- (E);

 \coordinate (A) at (0 + \trans, \size);
 \coordinate (B) at (\size + \trans, 0);
 \coordinate (C) at (4 * \size + \trans, 0);
 \coordinate (D) at (2 * \size + \trans, 2 *\size);
 \coordinate (E) at (0 + \trans, 2 * \size);
 \coordinate (F) at (1.5 * \size + \trans , \size);
 \coordinate (G) at (\size + \trans, \size);
 \coordinate (H) at (2 * \size + \trans, \size);
 \coordinate (I) at (0 + \trans, 2 * \size);
 \coordinate (J) at (\size + \trans, 2 * \size);  
 \coordinate (K) at (0 + \trans, \size);
 \coordinate (L) at (4 * \size + \trans, 0);
 \coordinate (M) at (3 * \size + \trans, \size);
 \coordinate (N) at (1/2 * \size + \trans, 1/2 * \size);
 
 \draw [very thick, color=green2] (G) -- (E) ;
 \draw [very thick, color=green2] (G) -- (N) ;
 \draw [very thick, color=green2] (G) -- (M) ;
 \draw [thin, color=black] (G) -- (F) ;
 \node [black] at (\size+ \trans, \size) {\tiny$\bullet$}; 
 \node [black] at (1.5* \size+ \trans, \size) {\tiny$\bullet$}; 
 \draw [very thick] (B) -- (C);
 \draw [very thick] (D) -- (E)-- (A);
 \draw [very thick, dashed] (C) -- (D);
 \draw [very thick, dashed] (A) -- (B);

\end{tikzpicture}
\caption{The intersection of tropicalizations: $\Trop ((0,1)), \Trop ((1,0)), \Trop ((1,1)) $}\label{TwopointblowupFig}
\end{figure}
\end{example}

\begin{example}\label{Twopointblowup1}
Let $X$ be the two-point blowup of $\C\P^2$ determined by the moment polytope
$$
P =\{ (u_1, u_2) \in M_\R : u_1 \geq 0,\, u_2 \geq 0,\, 2 - u_1 + u_2 \geq 0,\, 5 - u_2 \geq 0,\, 1 + u_1 - u_2 \geq 0 \}.
$$

We begin by drawing the tropicalizations $\Trop \left(x_1  \frac{\pa \frak{PO}_0}{\pa x_1} \right)$ and $\Trop \left(x_2  \frac{\pa \frak{PO}_0}{\pa x_2} \right)$ in Figure~\ref{TwopointblowupFig2} and observe that the intersection of the tropicalizations appears as
$$
\{ (u_1, u_2 ) \in M_\R : 2 \leq u_1 \leq 4, \, u_2 = u_1 - 0.5 \} \cup \{ (1, 1) \},
$$. 

\begin{figure}[h]
\centering
\begin{tikzpicture}
 \def\size{0.6}  
 \def\trans{4}

 \coordinate (A) at (0, 0);
 \coordinate (B) at (\size*2, 0);
 \coordinate (C) at (\size*7, \size*5);
 \coordinate (D) at (\size*4, \size*5);
 \coordinate (E) at (0, \size*1);
 \coordinate (F) at (\size*1, \size*1);
 \coordinate (G) at (\size*2, \size*1.5);
 \coordinate (H) at (\size*4, \size*3.5);
 \coordinate (I) at (0, \size*0.5);
 \coordinate (J) at (\size*5.5, \size*5);  
 \coordinate (K) at (\size*1.5, \size*1);
 \coordinate (L) at (\size*1, 0);

 \draw [very thick, color=blue] (K) -- (J) ;
 \draw [very thick, color=blue] (K) -- (E) ;
 \draw [very thick, color=blue] (K) -- (L) ;
 \draw [thin, color=black] (G) -- (H) ;
 \node [black] at (\size, \size) {\tiny$\bullet$}; 
 \node [black] at (G) {\tiny$\bullet$}; 
 \node [black] at (H) {\tiny$\bullet$};  
 \draw [very thick] (A) -- (B) -- (C) -- (D)-- (E)-- (A);

 \coordinate (A) at (0 - \trans, 0);
 \coordinate (B) at (\size*2 - \trans, 0);
 \coordinate (C) at (\size*7 - \trans, \size*5);
 \coordinate (D) at (\size*4 - \trans, \size*5);
 \coordinate (E) at (0 - \trans, \size*1);
 \coordinate (F) at (\size*1 - \trans, \size*1);
 \coordinate (G) at (\size*2 - \trans, \size*1.5);
 \coordinate (H) at (\size*4 - \trans, \size*3.5);
 \coordinate (I) at (0 - \trans, \size*0.5);
 \coordinate (J) at (\size*5.5 - \trans, \size*5);  
 \coordinate (K) at (\size*1.5 - \trans, \size*1);
 \coordinate (L) at (\size*1 - \trans, 0);

 \draw [very thick, color=red] (G) -- (H) ;
 \draw [very thick, color=red] (H) -- (C) ;
 \draw [very thick, color=red] (H) -- (D) ;
 \draw [very thick, color=red] (G) -- (B) ;
 \draw [very thick, color=red] (G) -- (I) ;
 \draw [thin, color=black] (G) -- (H) ;
 \node [black] at (\size - \trans, \size) {\tiny$\bullet$}; 
 \node [black] at (G) {\tiny$\bullet$}; 
 \node [black] at (H) {\tiny$\bullet$};  
 \draw [very thick] (A) -- (B) -- (C) -- (D)-- (E)-- (A);

\end{tikzpicture}
\caption{The intersection of tropicalizations: $\Trop \left(x_1\frac{\pa \frak{PO}_0}{\pa x_1}\right), \Trop \left(x_2\frac{\pa \frak{PO}_0}{\pa x_2}\right)$}\label{TwopointblowupFig2}
\end{figure}

By using the method of probes developed by McDuff \cite{Mc}, one can see however that the intersection contains positions of displaceable fibers. This example shows that two tropicalizations $\Trop \left(x_1\frac{\pa \frak{PO}_0}{\pa x_1}\right)$ and  $\Trop \left(x_2\frac{\pa \frak{PO}_0}{\pa x_2}\right)$ are not sufficient to detect non-displaceable toric fibers precisely. In fact, $L((1,1))$ and $L((3, 2.5))$ are the only non-displaceable fibers. So, more irrelevant positions must be eliminated and Theorem~\ref{THEOREMA} exactly tells us which fibers must be filtered out. In this case, we take the intersection of three tropicalizations
$$
\Trop ((1,0))
\cap \, \Trop ((0,1)) \cap \Trop ((1,-1)) \cap \textup{ Int} (P)
$$
and observe that the intersection points are exactly $(1,1)$ and $(3, 2.5)$ (See Figure~\ref{TwopointblowupFig3}). These are exactly the positions of non-displaceable fibers of $X$. 
\begin{figure}[h]
\centering
\begin{tikzpicture}
 \def\size{0.6}  
 \def\trans{4}

 \coordinate (A) at (0, 0);
 \coordinate (B) at (\size*2, 0);
 \coordinate (C) at (\size*7, \size*5);
 \coordinate (D) at (\size*4, \size*5);
 \coordinate (E) at (0, \size*1);
 \coordinate (F) at (\size*1, \size*1);
 \coordinate (G) at (\size*2, \size*1.5);
 \coordinate (H) at (\size*4, \size*3.5);
 \coordinate (I) at (0, \size*0.5);
 \coordinate (J) at (\size*5.5, \size*5);  
 \coordinate (K) at (\size*1.5, \size*1);
 \coordinate (L) at (\size*1, 0);

 \draw [very thick, color=blue] (K) -- (J) ;
 \draw [very thick, color=blue] (K) -- (E) ;
 \draw [very thick, color=blue] (K) -- (L) ;
 \node [black] at (\size, \size) {\tiny$\bullet$}; 
 \node [black] at (\size*3, \size*2.5) {\tiny$\bullet$};  
 \draw [very thick] (B) -- (C);
 \draw [very thick] (D)-- (E)-- (A);
 \draw [very thick, dashed] (A) -- (B);
 \draw [very thick, dashed] (C) -- (D);

 \coordinate (A) at (0 - \trans, 0);
 \coordinate (B) at (\size*2 - \trans, 0);
 \coordinate (C) at (\size*7 - \trans, \size*5);
 \coordinate (D) at (\size*4 - \trans, \size*5);
 \coordinate (E) at (0 - \trans, \size*1);
 \coordinate (F) at (\size*1 - \trans, \size*1);
 \coordinate (G) at (\size*2 - \trans, \size*1.5);
 \coordinate (H) at (\size*4 - \trans, \size*3.5);
 \coordinate (I) at (0 - \trans, \size*0.5);
 \coordinate (J) at (\size*5.5 - \trans, \size*5);  
 \coordinate (K) at (\size*1.5 - \trans, \size*1);
 \coordinate (L) at (\size*1 - \trans, 0);

 \draw [very thick, color=red] (G) -- (H) ;
 \draw [very thick, color=red] (H) -- (C) ;
 \draw [very thick, color=red] (H) -- (D) ;
 \draw [very thick, color=red] (G) -- (B) ;
 \draw [very thick, color=red] (G) -- (I) ;
 \node [black] at (\size-\trans, \size) {\tiny$\bullet$}; 
 \node [black] at (\size*3-\trans, \size*2.5) {\tiny$\bullet$};  
 \draw [very thick] (A) -- (B) -- (C) -- (D)-- (E);
 \draw [very thick, dashed] (E)-- (A);

 \coordinate (A) at (0 + \trans, 0);
 \coordinate (B) at (\size*2 + \trans, 0);
 \coordinate (C) at (\size*7 + \trans, \size*5);
 \coordinate (D) at (\size*4 + \trans, \size*5);
 \coordinate (E) at (0 + \trans, \size*1);
 \coordinate (F) at (\size*1 + \trans, \size*1);
 \coordinate (G) at (\size*2 + \trans, \size*1.5);
 \coordinate (H) at (\size*4 + \trans, \size*3.5);
 \coordinate (I) at (0 + \trans, \size*0.5);
 \coordinate (J) at (\size*5.5 + \trans, \size*5);  
 \coordinate (K) at (\size*1.5 + \trans, \size*1);
 \coordinate (L) at (\size*1 + \trans, 0);
 \coordinate (M) at (\size*2.5 + \trans, \size*2.5);
 \coordinate (N) at (\size*4.5 + \trans, \size*2.5);
 \coordinate (O) at (\size*2 + \trans, \size*3); 

 \draw [very thick, color=green2] (A) -- (M) ;
 \draw [very thick, color=green2] (M) -- (N) ;
 \draw [very thick, color=green2] (M) -- (O) ; 
 \node [black] at (\size +\trans, \size) {\tiny$\bullet$}; 
 \node [black] at (\size*3 +\trans, \size*2.5) {\tiny$\bullet$};  
 \draw [very thick] (E) -- (A) -- (B);
 \draw [very thick] (C) -- (D);
 \draw [very thick, dashed] (B) -- (C);
 \draw [very thick, dashed] (D)-- (E);

\end{tikzpicture}
\caption{The intersection of tropicalizations: $\Trop ((1,0)), \Trop ((0,1)), \Trop ((1,-1))$}\label{TwopointblowupFig3}
\end{figure}
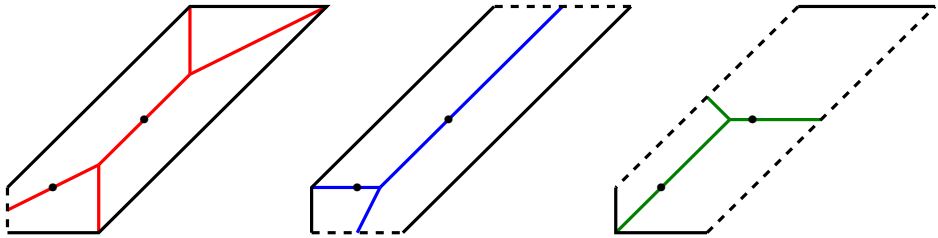
\end{example}

\section{Bulk-deformations and potential functions}\label{section:Bulk}
The aim of this section is to review notations and results from \cite{FOOOToric2} which will be used in the proofs of main Theorems (Section~\ref{ProofA} and Section~\ref{ProofB}). Again, throughout this section, let $X$ be the compact symplectic toric manifold determined by the image of a moment map $\pi : X \to P \subset M_\R$. As in ~\eqref{descriptionofpolytope}, the moment polytope $P$ has the unique description 
$$
\displaystyle P = \bigcap_{j=1}^m \left\{ \u \in M_\R : l_j (\u) \geq 0 \right\}.
$$
where $l_j (\u) := \langle \u, \v_j \rangle - \lambda_j$. In this description, each hyperplane given by $l_j = 0$ in $M_\R$ must contain one single facet $P_j$ of $P$. For an integer $j \in \{1, \cdots, m\}$, the divisor $\scr{D}_j$ of $X$ is declared to be $\pi^{-1}(P_j)$, which is a component of the toric divisor of $X$.  Let $\scr{A}(\Lambda_0)$ and $\scr{A}(\Lambda_+)$ be respectively the free $\Lambda_0$-module and $\Lambda_+$-module generated by all $\scr{D}_j$'s. In \cite{FOOOToric2}, Fukaya-Oh-Ohta-Ono employed these ambient cycles on $X$ to deform the given $A_\infty$-algebra $\left( H^\bullet \left( L(\u); \Lambda_0 \right), \{ \frak{m}^k : k \geq 0 \} \right)$ defined in Section~\ref{Ainftyalgebrasontoricfibers}. For every $\frak{b} \in \scr{A}(\Lambda_0)$, one can obtain the deformed $A_\infty$-algebra $\left( H^\bullet \left( L(\u); \Lambda_0 \right), \{ \frak{m}^k_{\frak{b}} : k \geq 0 \} \right)$. Such deformation is called a \emph{bulk-deformation} of the $A_\infty$-algebra. We will not spell out how an ambient cycle deforms the structure maps in the $A_\infty$-algebra (we refer it to \cite{FOOOToric2} and \cite{FOOOSurv}) because it is sufficient to observe how a bulk-deformation affects the potential function of the $A_\infty$-algebra for our purpose detecting non-displaceable toric fibers with a non-zero deformed Floer cohomology. With the emphasis on this aspect, following \cite{FOOOToric2}, we will recall relation between bulk-deformations and potential functions in this section. 

For each $\frak{b} \in \scr{A}(\Lambda_0)$ and each toric fiber $L(\u)$ of $X$, every cochain $b \in H^1(L(\u); \Lambda_0)$ is turned out to be a bounding cochain meaning that $\sum_{k=0}^\infty \frak{m}^k_{\frak{b}} (b^{\otimes k})$ is a multiple of the Poincar\'{e} dual $\textup{PD}[L(\u)]$ (Theorem 8.2 in \cite{FOOOSurv}). Therefore, for each $\frak{b} \in \scr{A}(\Lambda_0)$ and $\u \in \text{Int}(P)$, the potential function $\frak{PO}^\u_\frak{b}$ can be defined on $H^1 (L(\u); \Lambda_0)$. Moreover, for the further deformed $A_\infty$-algebra $\{ \frak{m}^k_{\frak{b}, b} : k \geq 0 \}$ by $b$ as in Section~\ref{Ainftyalgebrasontoricfibers}, it follows that the deformed differential $\frak{m}^1_{\frak{b},b}$ satisfies $\frak{m}^1_{\frak{b},b} \circ \frak{m}^1_{\frak{b},b} = 0$ from the $A_\infty$-relation and the fact that $\textup{PD}[L(\u)]$ is the strict unit in the deformed $A_\infty$-algebra $\left(H^\bullet(L(\u); \Lambda_0), \{ \frak{m}^k_{\frak{b}} : k \geq 0 \}\right)$. We then define the \emph{\ulp bulk-\urp deformed Floer cohomology} given by the differential $\frak{m}^1_{\frak{b},b}$ as follows:
$$
HF^\bullet((L(\u), \frak{b}, b), (L(\u), \frak{b}, b); \Lambda_0) := \Ker \, (\frak{m}^1_{\frak{b},b}) /  \Im  \, (\frak{m}^1_{\frak{b},b}). 
$$ 

\subsection{Leading term equations}\label{subsectionleadingtermequation}
For any point $\u$ in the interior of a moment polytope $P$, we arrange the values $l_j(\u) \, (j= 1, \cdots, m)$ into the ascending order, and we denote the arranged values by $S_l$ so that $S_l$'s are real numbers obeying two conditions
$$
\{ S_l : l = 1, \cdots, m(\u) \} = \{ l_j (\u) : j = 1, \cdots, m \} \quad \textup{and} \quad 0 < S_1 < S_2 < \cdots < S_{m(\u)}.
$$ 
Here, an integer $m(\u)$ denotes the number of the different values of $l_j(\u)$'s and it certainly depends on $\u$. The number $S_l$ is frequently called the \emph{energy level} because $2 \pi \cdot S_l$ is the symplectic area of a certain holomorphic disc whose boundary maps into $L(\u)$. 

In order to take an index system reflecting the energy level $S_l$'s, it is convenient to replace $j$ with $(r,s)$ determined by the following lexicographic order: 
\begin{enumerate}
\item Take $r = l$ so that $l_j(\u) = S_l$. 
\item Fix a numbering of the indices $i$'s with $l_i(\u) = S_l$ from $1$ to $a_l$ where an integer $a_l$ is the number of $l_i$'s such that $l_i(\u)= S_l$ and take $s$ as the corresponding number of the given index $j$. 
\end{enumerate}

We identify two index systems $\{j\}$ and $\{(r,s)\}$ so that for each $j$ there exists $(r,s)$ such that $j = (r,s)$ and vice versa. When we need to consider two different quantities or vectors indexed by $j$ and $r,s$ even though $j = (r,s)$, we will use $r,s$ \emph{without} parenthesis to emphasize that it is meaningful as a double index and is not a substitution of the number $j$. For example, a vector $\v_{(r,s)}$ is identical with $\v_j$ whenever $j = (r,s)$, however a vector $\v_{r,s}$ is not necessarily equal to $\v_j$ even though $j = (r,s)$.

Following Section 4 in \cite{FOOOToric2}, we now attempt to simplify the potential function by introducing a new coordinate system compatible with the energy level $S_l$'s. Let $A_l^\perp$ be the real vector space generated by the inward primitive normal vectors to facets up to the level $S_l$, that is,
\begin{equation}\label{Aperp}
A_l^\perp := \langle \v_{(1,1)}, \cdots, \v_{(1,a_1)}, \cdots, \v_{(l,1)}, \cdots, \v_{(l,a_l)} \rangle \subset N_\R.
\end{equation}
Setting $\dim_\R A_0^\perp := 0$, let $d_l := \dim_\R A_l^\perp - \dim_\R A_{l-1}^\perp$ and let $\kappa(\u)$ be the smallest integer $l$ such that $A_l^\perp = N_\R$.  In order to write down the potential function on $H^1(L(\u); \Z)$ as a Laurent series in terms of variable $\{ y_j \}$ in Section~\ref{LFT on toric}, we fixed a basis $\{e_1, \cdots, e_n\}$ of the lattice $M$ giving us an identification of $M \simeq \Z^n$ and a coordinate system on $H^1(L(\u); \Z)$, and we took $y_j := \exp ( e^*_j )$ where $\{ e_1^* , \cdots , e_n^* \}$ is the dual basis of $\{ e_1, \cdots, e_n \}$. Instead of $\{ e_1^* , \cdots , e_n^* \}$, we choose a basis of $N_\R$
\begin{equation}\label{ers*}
\{ e^*_{r,s} : 1 \leq r \leq \kappa(\u), \, 1 \leq s \leq d_r\}
\end{equation}
satisfying the following conditions:
\begin{enumerate}
\item For each $l \in \N$, $\{ e^*_{r,s} : 1 \leq r \leq l, \, 1 \leq s \leq d_r\}$ forms a $\Q$-basis of $A^\perp_l \cap N_\Q$.  
\item Each $\v_j$ is contained in $\displaystyle \bigoplus_{r=1}^{\kappa(\u)} \bigoplus_{s=1}^{d_r} \, \Z \, e^*_{r,s}$.
\end{enumerate}
Regarding $e^*_{r,s}$ in $N_\R \simeq \Hom(M_\R, \R)$ as a function on $M_\R$ and taking $y_{r,s} := \exp \left( e^*_{r,s} \right)$, we can express the potential function as a Laurent series in terms of variables $\{ y_{r,s}: 1 \leq r \leq \kappa(\u), \, 1 \leq s \leq d_r \}$. 

For later usage, we keep an explicit formula connecting $\{ y_j : j = 1, \cdots, m\}$ to $\{ y_{r,s}: 1 \leq r \leq \kappa(\u), \, 1 \leq s \leq d_r \}$. By the second condition of choosing $\{e^*_{r,s} \}$ and the fact that $\{ \v_j : j = 1, \cdots, m \}$ generates $N$ as a $\Z$-module, we obtain
$$
e_j^* = \sum_{r=1}^{\kappa(\u)} \sum_{s=1}^{d_r} a_{j}^{r,s} e_{r,s}^*
$$
for some $a_{j}^{r,s} \in \Z$. It leads to
\begin{equation} \label{coord}
y_j = \prod_{r=1}^{\kappa(\u)} \prod_{s=1}^{d_r} y_{r,s}^{a_{j}^{r,s}}. 
\end{equation}

Now, we focus on the finite sum of Laurent monomials of variables $\{ y_{r,s} \}$ forming the coefficient of $T^{S_{l}}$ in the leading order potential function $\frak{PO}^\u_0$ at $\u$ of $P$.\footnote{It becomes transparent in Lemma~\ref{potentialfunctionbulkdeformb+} why it deals with terms in the leading-order potential function (not in the potential function).} This sum is denoted by $(\frak{PO}^\u_0)_l$ and is written as follows:
$$
\left(\frak{PO}^\u_0\right)_l = \sum_{a=1}^{a_l} \y^{\v_{(l,a)}}
$$
where $a_l$ is the number of $l_j$'s such that $l_j(\u)= S_l$ and 
$$
\displaystyle \v_{(l,a)} = \sum_{r=1}^{\kappa(\u)} \sum_{s=1}^{d_r} {v}^{r,s}_{(l,a)} e_{r,s}^*, \quad \displaystyle \y^{\v_{(l,a)}} =\prod_{r=1}^{\kappa(\u)} \prod_{s=1}^{d_r} y_{r,s}^{{v}^{r,s}_{(l,a)}}.
$$ 
By the second condition of choosing $\{e^*_{r,s} \}$, for each $(l,a)$ we have
$$
\displaystyle \v_{(l,a)} \in \bigoplus_{r=1}^{l} \, \bigoplus_{s=1}^{d_r} \, \Z e^*_{r,s}
$$
and hence $(\frak{PO}^\u_0)_l$ is a Laurent polynomial with respect to $\{ y_{r,s} \}$, i.e.
\begin{equation}\label{wherePOul}
\left(\frak{PO}^\u_0\right)_l \in \Z \left[y_{1,1}, y^{-1}_{1,1}, \cdots, y_{l,d_l}, y^{-1}_{l,d_l} \right].
\end{equation}

\begin{definition} [See Section 4 in \cite{FOOOToric2}] \label{leadingtermequation}
The \emph{leading term equation} at $\u$ of $P$ with respect to $\{ y_{r,s} \}$ is defined to be the system of equations 
\begin{equation}\label{leadingterm}
\begin{cases}
\displaystyle y_{1,s} \frac{\pa \left( \frak{PO}^\u_0 \right)_1 } {\pa \, y_{1,s}} = 0 \quad \text{ for } s = 1, \cdots , d_1 \\
\displaystyle y_{2,s} \frac{\pa \left( \frak{PO}^\u_0 \right)_2 } {\pa \, y_{2,s}} = 0 \quad \text{ for } s = 1, \cdots , d_2 \\
\quad \quad \vdots \\
\displaystyle y_{\kappa, s} \frac{\pa \left( \frak{PO}^\u_0 \right)_{\kappa} } {\pa \, y_{\kappa,s}} = 0 \quad \text{ for } s = 1, \cdots , d_\kappa
\end{cases}
\end{equation}
where $\kappa := \kappa(\u)$.
\end{definition}

Note that leading term equations depend on our choice of basis $\{e^*_{r,s} \}$, but the leading term equation with respect to one coordinate system is related to that of another coordinate system by a coordinate change. 

\subsection{Bulk-deformations and non-displacement of toric fibers}\label{subsection:Bulk-deformations and non-displacement of toric fibers}

There are at least two reasons why a bulk-deformation is introduced for toric cases in \cite{FOOOToric2}. One reason is that a bulk-deformation by $\scr{A}(\Lambda_+)$ allows us to get rid of all terms other than $\sum_{l=1}^{\kappa(\u)} \left(\frak{PO}^\u_0\right)_l T^{S_l}$ in the potential function $\frak{PO}^\u$ of $X$.\footnote{Theorem~\ref{Ainftyrelationsum} (4) is needed for the elimination process} It is computationally preferable because the potential function is in general hard to compute. Another reason is that a bulk-deformation by $\scr{A}(\Lambda_0)$ can change coefficients in $\frak{PO}^\u_0$ so that when dealing with leading term equations it gives us flexibility in some examples including Example 11.7 in \cite{FOOOToric2}. It is proved that non-displacement of toric fibers is guaranteed if a bulk-deformed Floer cohomology does not vanish (See Section 8 in \cite{FOOOToric2}). We will actively use bulk-deformations by $\scr{A}(\Lambda_0)$ to take advantage of the flexibility in Section~\ref{ProofA}. 

We denote by $\frak{PO}_\frak{b}^\u$ the bulk-deformed potential function by $\frak{b} \in \scr{A}(\Lambda_0)$ (or $\scr{A}(\Lambda_+)$) at $\u$. For a suitable choice of $\frak{b} \in \scr{A}(\Lambda_+)$, the potential function $\frak{PO}^\u$ can be deformed as follows: 

\begin{lemma} [See Section 4 in \cite{FOOOToric2}] \label{potentialfunctionbulkdeformb+}  For the potential function $\frak{PO}^\u$ at $\u \in \textup{Int}(P)$ of a compact symplectic toric manifold $X$, there exists a bulk-parameter 
$$
\frak{b} = \sum_{j=1}^m \frak{b}_j \cdot \scr{D}_j = \sum_{r=1}^{\kappa(\u)} \sum_{s=1}^{a_r} \frak{b}_{(r,s)} \cdot \scr{D}_{(r,s)}  \in \scr{A}(\Lambda_+)
$$ 
such that the bulk-deformed potential function by $\frak{b}$ is
$$
\frak{PO}_\frak{b}^\u =  \sum_{l=1}^{\kappa(\u)}  \left( \sum_{a=1}^{a_l} \, \y^{\v_{(l,a)}} \right) T^{S_l} = \sum_{l=1}^{\kappa(\u)} \left(\frak{PO}^\u_0\right)_l T^{S_l}.
$$
\end{lemma}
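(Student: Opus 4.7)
The plan is to construct $\frak{b}$ by an inductive procedure on the energy levels $S_1 < S_2 < \cdots$, exploiting the effect of $\scr{A}(\Lambda_+)$-bulk deformations on the one-point open Gromov-Witten invariants that assemble into the potential function. Two obstructions must be eliminated: (a) the holomorphic-sphere correction terms $\sum_k \frak{P}_k(\z)\, T^{\rho_k}$ appearing in $\frak{PO}^\u - \frak{PO}_0^\u$ by Theorem~\ref{differenceofpotentialfunction}; and (b) the Maslov $2$ contributions at energy levels $l > \kappa(\u)$, for which the new facet normal $\v_{(l,s)}$ already lies in $A_{l-1}^\perp$ and so is expressible as a $\Q$-linear combination of earlier normals.

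For $\frak{b} = \sum_j \frak{b}_j \scr{D}_j \in \scr{A}(\Lambda_+)$, the bulk-deformed potential takes the form
$$
\frak{PO}^\u_\frak{b}(\y) = \sum_\beta n^\frak{b}_\beta \, \y^{\pa \beta} \, T^{\omega(\beta)/(2\pi)}
$$
summed over Maslov $2$ disc classes, where $n^\frak{b}_\beta \in \Lambda_0$ depends on the $\frak{b}_j$'s polynomially with weights determined by the intersection numbers $\beta \cdot \scr{D}_j$. In particular $n^\frak{b}_{\beta_j} = e^{\frak{b}_j}$, which is $1$ plus corrections of strictly positive $T$-valuation. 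Grouping terms by energy level, one writes $\frak{PO}^\u_\frak{b} = \sum_l Q_l(\frak{b}; \y)\, T^{S_l} + (\text{intermediate } T\text{-powers})$ and observes that $Q_l(\frak{b}; \y)$ depends only on the $\frak{b}_{(r,s)}$'s with $r \leq l$.

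I would then run the induction as follows. At step $l$, having specified enough of the $\frak{b}_{(r,s)}$'s with $r \leq l$ to force $Q_r(\frak{b}; \y) = (\frak{PO}_0^\u)_r$ for $r \leq \min(l, \kappa(\u))$ and $Q_r(\frak{b}; \y) = 0$ for $\kappa(\u) < r \leq l$, pass to level $l+1$. The contributions to $Q_{l+1}$ split into a leading Maslov $2$ piece $\sum_s \y^{\v_{(l+1,s)}}$ (with coefficient $1$ coming from the constant-in-$T$ parts of $e^{\frak{b}_{(l+1,s)}}$) together with fully determined correction terms coming from the higher $T$-expansions of $e^{\frak{b}_{(r,s)}}$ for $r \leq l$ and from the sphere contributions of Theorem~\ref{differenceofpotentialfunction}. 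When $l+1 \leq \kappa(\u)$ the leading piece is precisely $(\frak{PO}_0^\u)_{l+1}$, and one solves for the remaining $\frak{b}$-data at level $l+1$ to kill the corrections. When $l+1 > \kappa(\u)$, each $\y^{\v_{(l+1,s)}}$ factors as a Laurent monomial in the previously occurring $\y^{\v_{(r,s)}}$'s, which gives exactly the algebraic room needed to absorb the unwanted Maslov $2$ term into the correction-cancellation equation.

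The main obstacle I anticipate is bookkeeping the intermediate $T$-powers introduced by expanding $e^{\frak{b}_j}$ and verifying that the inductive procedure converges in the $T$-adic sense to a well-defined $\frak{b} \in \scr{A}(\Lambda_+)$. This ultimately rests on Gromov compactness, which ensures that only finitely many disc classes contribute below any fixed energy, so that $\frak{PO}^\u_\frak{b}$ taken modulo $T^N$ depends on only finitely many parameter choices for each $N$, making the induction effective at each finite precision and producing a consistent limit $\frak{b}$.
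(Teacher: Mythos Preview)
The paper does not supply its own proof of this lemma: it is stated with the attribution ``See Section 4 in \cite{FOOOToric2}'' and no argument is given in the present paper. So there is no in-paper proof to compare against; your sketch is effectively a reconstruction of the argument from \cite{FOOOToric2}. Your inductive strategy on energy levels, together with the appeal to Gromov compactness (equivalently the $G$-gapped condition of Theorem~\ref{Ainftyrelationsum}(4), which the paper itself flags in a footnote as the key input) for $T$-adic convergence of the procedure, is indeed the approach taken there.

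One step in your outline is under-justified. For $l+1 > \kappa(\u)$ you say that the factorization of $\y^{\v_{(l+1,s)}}$ as a Laurent monomial in the earlier $\y^{\v_{(r,s)}}$ ``gives exactly the algebraic room needed'' to cancel the unwanted basic-disc contribution. But bulk-deforming the divisor $\scr{D}_{(l+1,s)}$ by $\frak{b}_{(l+1,s)} \in \Lambda_+$ only rescales that term by $e^{\frak{b}_{(l+1,s)}} \in 1 + \Lambda_+$, which is never zero; you cannot kill it at its own level this way. The cancellation must come instead from the higher-$T$ expansions of the $e^{\frak{b}_{(r,s)}}$ with $r \leq \kappa(\u)$ together with the bulk-deformed sphere-bubble terms, and for this you need $\y^{\v_{(l+1,s)}} T^{S_{l+1}}$ to be expressible in the form $T^{\rho}\cdot(\text{monomial in the } z_{(r,s)},\ r\le \kappa(\u))$ with $\rho>0$. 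That $\v_{(l+1,s)}$ lies in the $\Q$-span $A^\perp_{\kappa(\u)}$ does not by itself give such an expression with the required positivity and integrality. Filling this in is exactly where the argument in \cite{FOOOToric2} does real work; your sketch gestures at it but does not carry it out.
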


We notice that the logarithmic derivative of $\frak{PO}_\frak{b}^\u$ with respect to $\{y_{r,s}\}$ is exactly the leading term equation in Definition~\ref{leadingtermequation}. One of main results in \cite{FOOOToric2} are stated as follows. 

\begin{theorem} [Theorem 4.5 in \cite{FOOOToric2}] \label{FOOOToric2Main1} For $\u \in \text{Int}(P)$, the followings are equivalent.
\begin{enumerate}
\item The leading term equation at $\u$ of $P$ in ~\eqref{leadingterm} admits a solution on $(\C^*)^n$.\footnote{A choice of variable $\{y_{r,s}\}$ does not matter because the leading term equation with respect to any system of variables has a solution as soon as the leading term equation with respect to one system of variables has (See Lemma~\ref{solutionandbasechange}).}
\item There exists a bulk-parameter $\frak{b} \in \scr{A}(\Lambda_+)$ such that $\frak{PO}_{\frak{b}}^\u$ has a critical point on $\left( \Lambda_U \right)^n$.
\item There exist a cochain $b \in H^1 (L(\u); \Lambda_0)$ and a bulk-parameter $\frak{b} \in \scr{A}(\Lambda_+)$ such that the deformed Floer cohomology $HF^\bullet((L(\u), (\frak{b}, b)), (L(\u), (\frak{b}, b)); \Lambda_0)$ is isomorphic to $H(T^n; \Lambda_0)$.
\end{enumerate}
\end{theorem}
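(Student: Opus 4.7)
The plan is to treat the two equivalences $(2) \Leftrightarrow (3)$ and $(1) \Leftrightarrow (2)$ separately. The equivalence $(2) \Leftrightarrow (3)$ is a bulk-deformed analogue of Theorem~\ref{criticalandnontrivial} and can be proved along the same lines. Given a critical point $\y^*$ of $\frak{PO}_{\frak{b}}^\u$ in $(\Lambda_U)^n$, define $b \in H^1(L(\u); \Lambda_0)$ by $\exp(\langle b, e_i \rangle) = y^*_i$; the output $\frak{m}^1_{\frak{b}, b}(h)$ of the deformed differential on any harmonic form $h$ is a $\C$-linear combination of logarithmic partial derivatives of $\frak{PO}_{\frak{b}}^\u$ evaluated at $\y^*$, and therefore vanishes, which combined with the canonicality of the $A_\infty$-algebra yields $HF^\bullet \simeq H^\bullet(T^n; \Lambda_0)$. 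Conversely, non-vanishing of the deformed Floer cohomology on all of $H^\bullet(T^n)$ forces all such logarithmic partials to vanish at the point prescribed by $b$, recovering a critical point.

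For $(2) \Rightarrow (1)$, note that a parameter $\frak{b} \in \scr{A}(\Lambda_+)$ only modifies $\frak{PO}^\u$ by terms of strictly positive $T$-valuation on top of the leading order part, so that the $T^{S_l}$-coefficient of $y_{l,s} \, \pa \frak{PO}_{\frak{b}}^\u / \pa y_{l,s}$ agrees with $y_{l,s} \, \pa (\frak{PO}^\u_0)_l / \pa y_{l,s}$. Reducing the critical point $\y^* \in (\Lambda_U)^n$ modulo $\Lambda_+$ thus produces a simultaneous solution in $(\C^*)^n$ to all the equations in \eqref{leadingterm}, after verifying that the reduction is compatible with the change of coordinates \eqref{coord} (this is Lemma~\ref{solutionandbasechange} as referenced in the footnote).

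The essential content lies in $(1) \Rightarrow (2)$. First apply Lemma~\ref{potentialfunctionbulkdeformb+} to choose a base bulk parameter $\frak{b}_0 \in \scr{A}(\Lambda_+)$ reducing the potential to $\frak{PO}_{\frak{b}_0}^\u = \sum_{l=1}^{\kappa(\u)} (\frak{PO}^\u_0)_l \, T^{S_l}$, a Laurent polynomial in $\{y_{r,s}\}$ over $\Lambda_0$. Starting from a solution $\bar{\y}$ of the leading term equation in $(\C^*)^n$, lift it inductively along the $T$-adic filtration: having constructed $\y^{(N)} = \bar{\y} + (\text{corrections in } \Lambda_+)$ solving the critical equations modulo $T^{N}$, use the block-triangular structure of $(\frak{PO}^\u_0)_l$ with respect to the variables $\{y_{r,s} : r \leq l\}$ to reduce the next-order obstruction to a finite collection of linear equations in the new corrections, and if needed add a further bulk perturbation $\delta \frak{b} \in \scr{A}(\Lambda_+)$ to cancel any obstructions that cannot be absorbed by the $\y$-corrections alone. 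The resulting $\frak{b} := \frak{b}_0 + \sum_N \delta \frak{b}$ converges in $\scr{A}(\Lambda_+)$ by $T$-adic completeness, and the assembled $\y \in (\Lambda_U)^n$ is the desired critical point.

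The main obstacle is the inductive lifting, specifically controlling the case where the Jacobian of $(\frak{PO}^\u_0)_l$ at $\bar{\y}$ fails to be of full rank on the relevant block. In that situation no naive implicit function theorem on $\Lambda_0$ applies, and one must genuinely exploit the flexibility of bulk-deformations to modify the coefficients of the monomials in $(\frak{PO}^\u_0)_l$ (via appropriate choices of $\frak{b}_{(r,s)}$) so that the linear system governing the next-order correction becomes solvable. Organizing this double induction --- outer over the energy levels $S_l$ and inner over the $T$-adic order --- while simultaneously constraining $\frak{b} \in \scr{A}(\Lambda_+)$ and $\y \in (\Lambda_U)^n$ is the most delicate bookkeeping in the argument.
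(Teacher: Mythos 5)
The paper does not prove this statement: it is cited verbatim as Theorem~4.5 of \cite{FOOOToric2} and used as a black box (the paper's own contributions begin afterwards, in Sections~\ref{ProofA} and~\ref{ProofB}). So there is no in-paper proof to compare against. That said, your sketch is a reasonable reconstruction of the strategy in \cite{FOOOToric2}, and a few remarks on its accuracy are in order.

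Your treatment of $(2)\Leftrightarrow(3)$ as a bulk-deformed analogue of Theorem~\ref{criticalandnontrivial} is correct in spirit; this is exactly the kind of argument FOOO run in Sections~3 and~4 of \cite{FOOOToric2}, using that $\frak{m}^1_{\frak{b},b}$ on a degree-one generator is governed by the logarithmic partial derivatives of $\frak{PO}^{\u}_{\frak{b}}$, combined with the canonicity (Theorem~\ref{Ainftyrelationsum}(3)) and unitality of $\textup{PD}[L(\u)]$.

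For $(2)\Rightarrow(1)$, your reduction modulo $\Lambda_+$ is correct, but the justification as written is too quick. The assertion that ``the $T^{S_l}$-coefficient of $y_{l,s}\,\pa\frak{PO}^{\u}_{\frak{b}}/\pa y_{l,s}$ agrees with $y_{l,s}\,\pa(\frak{PO}^{\u}_0)_l/\pa y_{l,s}$'' is true, but it does not follow merely from ``$\frak b$ only modifies $\frak{PO}^{\u}$ by positive-valuation terms.'' You also need the filtration property encoded in the choice of basis $\{e^*_{r,s}\}$: because $(\frak{PO}^{\u}_0)_l$ depends only on $y_{r,s}$ with $r\le l$ (see~\eqref{wherePOul}), and because by Theorem~\ref{differenceofpotentialfunction} any correction term $\frak{P}_k(\z)T^{\rho_k}$ that involves $y_{l,s}^{\pm1}$ must contain some $z_j$ with $l_j(\u)\ge S_l$ and hence has $T$-valuation strictly greater than $S_l$, the $T^{S_l}$-coefficient of the $(l,s)$-th critical equation is untouched by both the higher-order corrections and the bulk twist $e^{\frak{b}_j}=1+O(T^\varepsilon)$. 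Making this triangularity explicit is what actually carries the reduction. You also need Lemma~\ref{solutionandbasechange} to pass from the $\{y_i\}$ to the $\{y_{r,s}\}$ coordinate system, which you cite correctly.

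For $(1)\Rightarrow(2)$, you correctly identify this as the heart of the theorem and correctly flag the real obstacle: when the Jacobian of $(\frak{PO}^{\u}_0)_l$ at the mod-$\Lambda_+$ solution $\bar{\y}$ degenerates, naive Hensel/Newton lifting fails, and one must genuinely use the freedom in $\frak b\in\scr{A}(\Lambda_+)$ to tune the $O(T^\varepsilon)$-part of the coefficients $e^{\frak{b}_{(l,a)}}$ so that the linear system governing the next-order correction becomes solvable. This is consistent with how FOOO proceed (the elimination of Lemma~\ref{potentialfunctionbulkdeformb+} followed by an induction over both the energy levels $S_l$ and the $T$-adic order). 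Your sketch stops short of showing that the obstruction at each step can actually be absorbed by the bulk degrees of freedom, which is where the nontrivial bookkeeping lives; but as an outline of the argument it points in the right direction.
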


\begin{corollary} [See Section 4 in \cite{FOOOToric2}]
If the leading term equation at $\u$ of $P$ admits a solution on $(\C^*)^n$, then the toric fiber $L(\u)$ is non-displaceable. 
\end{corollary}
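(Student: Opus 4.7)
The plan is to invoke Theorem~\ref{FOOOToric2Main1} directly and then use the standard Hamiltonian invariance of Floer cohomology to pass from non-vanishing cohomology to non-displaceability. Concretely, the statement has the same logical shape as Corollary~\ref{stronglybalancednondisplaceable}, with the role of a critical point of $\frak{PO}^{\u}$ replaced by a critical point of a suitably bulk-deformed potential function.

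First I would use the implication (1)$\Rightarrow$(3) of Theorem~\ref{FOOOToric2Main1}: from a solution of the leading term equation \eqref{leadingterm} in $(\C^*)^n$ we obtain a bulk parameter $\frak{b}\in\scr{A}(\Lambda_+)$ and a $1$-cochain $b\in H^1(L(\u);\Lambda_0)$ such that
\[
HF^{\bullet}\bigl((L(\u),(\frak{b},b)),(L(\u),(\frak{b},b));\Lambda_0\bigr)\simeq H^{\bullet}(T^n;\Lambda_0).
\]
In particular, the bulk-deformed Floer cohomology is nonzero. Note that the equivalence in Theorem~\ref{FOOOToric2Main1} already encapsulates the nontrivial analytic content: the adjustment of the bulk parameter $\frak{b}\in\scr{A}(\Lambda_+)$ kills all higher-order correction terms in the potential function by Lemma~\ref{potentialfunctionbulkdeformb+}, after which a critical point of $\sum_{l=1}^{\kappa(\u)}(\frak{PO}_0^{\u})_l\, T^{S_l}$ in $(\Lambda_U)^n$ is built from the given solution in $(\C^*)^n$ by a straightforward scaling argument (choosing valuations on the $y_{r,s}$ to make each $S_l$-level equation homogeneous). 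This step is free for us, since it is quoted as part of Theorem~\ref{FOOOToric2Main1}.

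Second, I would invoke the Hamiltonian invariance of the bulk-deformed Lagrangian Floer cohomology, as established in Section~8 of \cite{FOOOToric2}: if $L(\u)$ were displaceable by a Hamiltonian diffeomorphism $\varphi$, then $\varphi(L(\u))\cap L(\u)=\emptyset$ would force the deformed Floer cohomology $HF^{\bullet}((L(\u),(\frak{b},b)),(\varphi(L(\u)),(\frak{b},b));\Lambda_0)$ to vanish, while Hamiltonian invariance identifies it with the non-zero group obtained in the first step. This contradiction gives non-displaceability of $L(\u)$.

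There is no genuine obstacle here: the corollary is essentially a repackaging of Theorem~\ref{FOOOToric2Main1} together with the displacement energy / Hamiltonian invariance formalism for bulk-deformed Floer theory developed in \cite{FOOOToric2}. The only subtle point worth double-checking is that the bulk class $\frak{b}\in\scr{A}(\Lambda_+)$ produced by Lemma~\ref{potentialfunctionbulkdeformb+} has strictly positive valuation, which is exactly what is needed for convergence of the bulk-deformed $A_\infty$-structure and hence for the Hamiltonian invariance argument to apply verbatim; this is already built into the cited results.
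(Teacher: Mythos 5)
Your proposal is correct and follows the route the paper implicitly relies on: apply the implication $(1)\Rightarrow(3)$ of Theorem~\ref{FOOOToric2Main1} to obtain a non-vanishing bulk-deformed Floer cohomology, then invoke the non-vanishing-Floer-cohomology-implies-non-displaceability principle from Section~8 of \cite{FOOOToric2}, which the paper itself points out in Section~\ref{subsection:Bulk-deformations and non-displacement of toric fibers}. One cosmetic quibble in your aside that does not affect the argument: the lift of the $(\C^*)^n$-solution of the leading term equation to a critical point in $(\Lambda_U)^n$ is obtained by an iterative $\Lambda_+$-correction at valuation zero, not by choosing nonzero valuations on the $y_{r,s}$ to homogenize the $S_l$-level equations.
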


\begin{remark} \label{remarkaboutbulkparameter} 
The statement about the equivalence $(1) \Leftrightarrow (2)$ in Theorem 4.5 \cite{FOOOToric2} is slightly different from that of Theorem~\ref{FOOOToric2Main1}. In \cite{FOOOToric2}, a bulk-parameter is chosen among the $\Lambda_+$-module $\scr{A}^{\text{all}}(\Lambda_+)$ generated by all possible intersections of components of toric divisors. Nonetheless, the proof of Theorem 4.5 in \cite{FOOOToric2} implies that if $\frak{PO}^\u_\frak{b}$ has a critical point at $\u$ for $\frak{b} \in \scr{A}^{\text{all}}(\Lambda_+)$, then there exists a bulk-parameter $\frak{b}^\prime \in \scr{A}(\Lambda_+)$ such that $\frak{PO}^\u_{\frak{b}^\prime}$ has a critical point at $\u$. Therefore, in toric cases, bulk-deformations by components of codimension $1$ from the toric divisor is sufficient to achieve a non-vanishing deformed Floer cohomology. 
\end{remark}

Just like balanced fibers (Definition~\ref{balanced}), to include more non-displaceable toric fibers, Fukaya-Oh-Ohta-Ono introduced a notion of \emph{bulk-balanced} fibers. 

\begin{definition} [cf. Definition 3.17 in \cite{FOOOToric2}] \label{bulkbalanced} 
Let $\Sigma \subset N$ be the normal fan of a moment polytope $P$. Let $X_\Sigma$ be the compact complex toric manifold given by the fan $\Sigma$. A toric fiber $L(\u)$ is called $\emph{bulk-balanced}$ if there exists a sequence of triples $(\omega^{(i)}, P^{(i)}, \u^{(i)})$ such that 
\begin{enumerate}
\item Each symplectic form $\omega^{(i)}$ is a torus-invariant symplectic form associated to $X_\Sigma$ and the sequence of symplectic forms $\omega^{(i)}$ converges to $\omega$.
\item The normal fan of each $P^{(i)}$ coincides with the normal fan $\Sigma$ of $P$ and the sequence of polytopes $P^{(i)}$ converges to $P$ with respect to the Hausdorff distance.
\item At each position $\u^{(i)} \in \text{Int} (P^{(i)})$, the leading term equation at $\u^{(i)}$ of $P^{(i)}$ in ~\eqref{leadingterm} admits a solution on $(\C^*)^n$ and the sequence of positions $\u^{(i)} \in \text{Int}(P^{(i)})$ converges to $\u \in \text{Int}(P)$. 
\end{enumerate}
\end{definition}

\begin{remark}
Definition~\ref{bulkbalanced} is different from Definition 3.17 in \cite{FOOOToric2}. But, two definitions are not essentially different because one can get rid of higher order parts by taking a another bulk-parameter $\frak{b}_i^\prime$ in $\scr{A}(\Lambda_+)$ if necessary. 
\end{remark}

We now see the effect of bulk-deformations by $\frak{b} \in \scr{A}(\Lambda_0)$ to the potential function. 

\begin{lemma} [See Section 11 in \cite{FOOOToric2}] For the potential function $\frak{PO}^\u$ at $\u$ of a compact symplectic toric manifold $X$, there exists a bulk-parameter 
$$
\frak{b} = \sum_{j=1}^m \frak{b}_j \cdot \scr{D}_j = \sum_{r=1}^{\kappa(\u)} \sum_{s=1}^{a_r} \frak{b}_{(r,s)} \cdot \scr{D}_{(r,s)}  \in \scr{A}(\Lambda_0)
$$ 
such that
$$
\frak{PO}_\frak{b}^\u =  \sum_{l=1}^{\kappa(\u)}  \left( \sum_{a=1}^{a_l} c_{(l,a)}  \cdot \y^{\v_{(l,a)}} \right) T^{S_l}
$$
where $c_{(l,a)} := \exp \left( \frak{b}_{(l,a), 0} \right)$ and $\frak{b}_{(l,a)} = \frak{b}_{(l,a),0} + \frak{b}_{(l,a),+}$ with $\frak{b}_{(l,a),0} \in \C$ and $\frak{b}_{(l,a),+} \in \Lambda_+$. 
\end{lemma}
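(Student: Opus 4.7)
The plan is to decompose the bulk parameter $\frak{b}$ along the canonical splitting $\Lambda_0 = \C \oplus \Lambda_+$, handle the constant $\C$-part directly, and then use the already-established $\scr{A}(\Lambda_+)$-version (Lemma~\ref{potentialfunctionbulkdeformb+}) to absorb all remaining higher-order contributions. Concretely, I would write each $\frak{b}_j = \frak{b}_{j,0} + \frak{b}_{j,+}$ with $\frak{b}_{j,0} \in \C$ and $\frak{b}_{j,+} \in \Lambda_+$, and accordingly split $\frak{b} = \frak{b}_0 + \frak{b}_+$ with $\frak{b}_0 \in \scr{A}(\C)$ and $\frak{b}_+ \in \scr{A}(\Lambda_+)$.

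The first stage is to analyze the effect of $\frak{b}_0$ on the potential function. In the bulk-deformation formalism of \cite{FOOOToric2}, one inserts copies of $\frak{b}_0$ at interior marked points of $J$-holomorphic discs; because each $\scr{D}_j$ is a codimension-two toric divisor, the contribution from insertions on a disc $\beta$ is organized through the intersection theory of $\beta$ with the components $\scr{D}_j$. The standard $1/k!$ weight on $k$ interior insertions yields an exponential-type reorganization. For the Maslov-index-$2$ disc $\beta_j$ associated with facet $P_j$, a generic perturbation realizes $\beta_j$ as meeting $\scr{D}_j$ transversely at one point and disjoint from $\scr{D}_i$ for $i \neq j$, so the leading-order contribution of $\beta_j$ to $\frak{PO}^\u_{\frak{b}_0}$ picks up the multiplicative factor $\exp(\frak{b}_{j,0})$. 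Re-indexing the facets as $j = (l,a)$ according to the energy levels $S_l$, this produces
$$
\frak{PO}^\u_{\frak{b}_0} = \sum_{l=1}^{\kappa(\u)} \Bigl( \sum_{a=1}^{a_l} c_{(l,a)} \, \y^{\v_{(l,a)}} \Bigr) T^{S_l} + R,
$$
where $c_{(l,a)} = \exp(\frak{b}_{(l,a),0})$ and the remainder $R$ gathers contributions from discs of Maslov index $\geq 4$ and sphere-bubbled configurations, all of which lie above the leading term of each energy level $S_l$.

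The second stage is to apply Lemma~\ref{potentialfunctionbulkdeformb+} to the $A_\infty$-algebra $\{\frak{m}^k_{\frak{b}_0}\}$ already deformed by $\frak{b}_0$. That lemma, combined with the iterative elimination argument of \cite{FOOOToric2} (which proceeds energy level by energy level using Theorem~\ref{Ainftyrelationsum}(4)), furnishes a further parameter $\frak{b}_+^\prime \in \scr{A}(\Lambda_+)$ whose bulk-deformation kills the remainder $R$; because $\frak{b}_+^\prime$ has strictly positive valuation, it cannot affect the leading coefficients $c_{(l,a)}$ of each $T^{S_l}$-term. Setting $\frak{b} := \frak{b}_0 + \frak{b}_+^\prime \in \scr{A}(\Lambda_0)$ and using that bulk-deformations compose as a group action in the deformation parameter then yields the desired expression for $\frak{PO}^\u_\frak{b}$.

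The main obstacle is making rigorous the claim that bulk-deformation by $\scr{A}(\C)$ produces exactly the multiplicative scaling $\exp(\frak{b}_{j,0})$ on the Maslov-$2$ contributions. The machinery of \cite{FOOOToric2} is originally developed for $\scr{A}(\Lambda_+)$-valued parameters, where $T$-adic convergence of the exponential series is automatic; extending to $\scr{A}(\Lambda_0)$ requires verifying that the $\C$-valued insertions still converge in the appropriate sense (cf.\ Remark~\ref{remarkaboutbulkparameter}), which is the toric analogue for ambient cycles of Cho's twisting argument \cite{Cho} used in Section~\ref{Ainftyalgebrasontoricfibers} to deform by $b_0 \in H^1(L(\u);\C)$. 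Once this convergence and the explicit form of the leading factors are in place, the remainder of the argument reduces to the energy-filtered cancellation already carried out in the proof of Lemma~\ref{potentialfunctionbulkdeformb+}.
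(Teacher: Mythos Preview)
The paper does not supply its own proof of this lemma: it is stated with the citation ``[See Section 11 in \cite{FOOOToric2}]'' and left unproved, so there is no in-paper argument to compare against. Your outline is essentially the strategy carried out in \cite{FOOOToric2}: the $\C$-part $\frak{b}_0$ of the bulk parameter contributes the multiplicative weight $\exp(\frak{b}_{j,0})$ on the basic Maslov-$2$ disc $\beta_j$ via the divisor-type relation $\beta_j \cdot \scr{D}_i = \delta_{ij}$, and the remaining higher-energy terms are then eliminated level by level exactly as in Lemma~\ref{potentialfunctionbulkdeformb+}.

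One small imprecision: your final sentence invokes ``bulk-deformations compose as a group action in the deformation parameter,'' but successive bulk-deformations do not literally add in $\scr{A}(\Lambda_0)$ in general. What actually happens in \cite{FOOOToric2} is that one works directly with the single parameter $\frak{b} = \frak{b}_0 + \frak{b}_+$ and shows that the bulk-deformed potential has leading part $\sum_j e^{\frak{b}_j}\,\y^{\v_j} T^{l_j(\u)}$ plus correction terms of strictly higher $T$-valuation at each energy level; the $\Lambda_+$-components $\frak{b}_{j,+}$ are then chosen inductively (using the $G$-gapped structure, Theorem~\ref{Ainftyrelationsum}(4)) to cancel those corrections without disturbing the already-fixed constants $c_{(l,a)} = \exp(\frak{b}_{(l,a),0})$. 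So rather than composing two separate deformations, it is cleaner to phrase both stages as determining the two pieces of a single $\frak{b}$. With that adjustment your sketch is correct.
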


For each $\frak{b} \in \scr{A}(\Lambda_0)$ and $1 \leq l \leq \kappa(\u)$, let  
\begin{equation}\label{llevelpotential}
\left(\frak{PO}_\frak{b}^\u \right)_l := \left( \sum_{a=1}^{a_l} \exp \left( \frak{b}_{(l,a), 0} \right) \cdot \y^{\v_{(l,a)}} \right).
\end{equation}
Next, we take the logarithmic derivative of $\left(\frak{PO}_\frak{b}^\u \right)_l$ with respect to $\{ y_{r,s} \}$ to obtain the \emph{generalized leading term equation}. 

\begin{definition} [See Section 11 in \cite{FOOOToric2}] \label{generalizedleadingtermequation} For $\frak{b} \in \scr{A}(\Lambda_0)$, the \emph{generalized leading term equation} at $\u$ of $P$ with respect to $\{ y_{r,s} \}$ is defined to be the system of equations
\begin{equation}\label{genleadingterm}
\begin{cases}
\displaystyle y_{1,s} \frac{\pa \left( \frak{PO}_\frak{b}^\u \right)_1 } {\pa \, y_{1,s}} = 0 \quad \text{ for } s = 1, \cdots , d_1 \\
\displaystyle y_{2,s} \frac{\pa \left( \frak{PO}_\frak{b}^\u \right)_2 } {\pa \, y_{2,s}} = 0 \quad \text{ for } s = 1, \cdots , d_2 \\
\quad \quad \vdots \\
\displaystyle y_{\kappa, s} \frac{\pa \left( \frak{PO}_\frak{b}^\u \right)_{\kappa} } {\pa \, y_{\kappa,s}} = 0 \quad \text{ for } s = 1, \cdots , d_\kappa
\end{cases}
\end{equation}
where $\kappa := \kappa(\u)$.
\end{definition}

Generalized leading term equations depend on our choices of a bulk parameter $\frak{b}$ and a basis $\{e^*_{r,s} \}$. Note that the coefficients in \eqref{genleadingterm} might change as $\frak{b}$ varies. Once $\frak{b}$ is fixed, they are independent up to coordinate changes. Furthermore, the following lemma asserts that a choice of $\{y_{r,s}\}$ does not matter when it comes to the existence of solutions. 

\begin{lemma} [Lemma 4.2 in \cite{FOOOToric2}] \label{solutionandbasechange} The system of equations
\begin{equation} \label{y_l,j}
y_{r,s} \frac{ \pa \frak{PO}_\frak{b}^\u  }{ \pa y_{r,s} } = 0
\end{equation}
has a solution $\{ ( y_{r,s} ) :  y_{r,s} \in \Lambda_U \}$ if and only if the system of equations
\begin{equation} \label{y_i}
y_i \frac{ \pa \frak{PO}_\frak{b}^\u }{ \pa y_i } = 0
\end{equation}
has a solution $\{ (y_i) : y_i  \in \Lambda_U \}$. 
\end{lemma}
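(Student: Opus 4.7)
The plan is to reduce both systems to the same linear-algebraic condition via the chain rule. The two sets of variables $\{y_i\}$ and $\{y_{r,s}\}$ correspond to two choices of $\mathbb{Z}$-basis of the lattice $N$, and both are related by a Laurent-monomial substitution with integer exponents, governed by a matrix in $\mathrm{GL}_n(\mathbb{Z})$.

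First I would verify that the matrix relating the two bases is integrally invertible. From condition (2) in the choice of $\{e_{r,s}^*\}$, every $\v_j$ lies in the $\mathbb{Z}$-span of $\{e_{r,s}^*\}$; since $X$ is a smooth compact toric manifold, the normal vectors $\{\v_j\}$ generate $N$ as a $\mathbb{Z}$-module, hence $\{e_{r,s}^*\}$ is a $\mathbb{Z}$-basis of $N$. Consequently, relation \eqref{coord} reads $y_i = \prod_{r,s} y_{r,s}^{a_i^{r,s}}$ for an integer matrix $A = (a_i^{r,s})$ whose inverse $B = (b_{r,s}^i)$ also has integer entries, and dually $y_{r,s} = \prod_i y_i^{b_{r,s}^i}$.

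Next, a direct application of the chain rule to any Laurent series $f$ in the $y_i$'s gives the identity
\begin{equation*}
y_{r,s} \frac{\partial f}{\partial y_{r,s}} \;=\; \sum_{i=1}^n a_i^{r,s}\, y_i \frac{\partial f}{\partial y_i},
\end{equation*}
and symmetrically $y_i \frac{\partial f}{\partial y_i} = \sum_{r,s} b_{r,s}^i \, y_{r,s}\, \partial f / \partial y_{r,s}$. Applying this to $f = \mathfrak{PO}_\mathfrak{b}^\u$ (which, by Lemma~\ref{potentialfunctionbulkdeformb+} style reasoning and convergence in the $T$-adic topology, admits the usual Laurent calculus over $\Lambda_0$), one sees that the vector of logarithmic partial derivatives in the two coordinate systems is related by the integer matrices $A$ and $A^{-1} = B$. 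Therefore the vanishing of all $y_i \partial f / \partial y_i$ at a given point is equivalent to the vanishing of all $y_{r,s} \partial f / \partial y_{r,s}$ at that same point.

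Finally, I would check that the substitution preserves the constraint $y \in \Lambda_U$. Since $\Lambda_U$ is a multiplicative group (closed under products and inverses in $\Lambda^*$) and the valuation $\mathfrak{v}_T$ is a homomorphism, if $y_i \in \Lambda_U$ for all $i$ then $y_{r,s} = \prod_i y_i^{b_{r,s}^i}$ satisfies $\mathfrak{v}_T(y_{r,s}) = \sum_i b_{r,s}^i \, \mathfrak{v}_T(y_i) = 0$, so $y_{r,s} \in \Lambda_U$; the converse is identical using the relation \eqref{coord}. Combining this with the equivalence of the two derivative systems proves both directions of the lemma. No step is really an obstacle: the only point requiring a moment of care is confirming the $\mathrm{GL}_n(\mathbb{Z})$ property of the change of basis, which ultimately rests on smoothness of $X$ (i.e.\ the normal vectors $\v_j$ generate $N$ over $\mathbb{Z}$).
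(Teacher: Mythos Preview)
The paper does not prove this lemma (it is quoted from \cite{FOOOToric2}), so I can only assess your argument on its own merits. Your chain-rule strategy is the right one, and one direction is fine; but the step you yourself flag as ``the only point requiring a moment of care'' is in fact incorrect as written.

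Your deduction ``$N=\langle \v_j\rangle_{\Z}\subseteq \bigoplus_{r,s}\Z\,e^*_{r,s}$, hence $\{e^*_{r,s}\}$ is a $\Z$-basis of $N$'' does not follow. Conditions (1) and (2) only place the $e^*_{r,s}$ in $N_\Q$, so $\bigoplus_{r,s}\Z\,e^*_{r,s}$ is a lattice of rank $n$ \emph{containing} $N$, possibly properly. In other words $A\in M_n(\Z)\cap\GL_n(\Q)$, but not $\GL_n(\Z)$ in general. (A quick example: for $\C\P^2$ with $\v_1=(1,0),\ \v_2=(0,1),\ \v_3=(-1,-1)$ and $A_1^\perp=N_\R$, the choice $e^*_{1,1}=(\tfrac12,\tfrac12),\ e^*_{1,2}=(\tfrac12,-\tfrac12)$ satisfies (1)--(2) yet gives $\det A=\pm 2$.) This is exactly why the paper remarks, immediately after the lemma, that the \emph{number} of solutions in the two systems may differ.

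The repair is short. The direction $(y_{r,s})\in\Lambda_U^n \Rightarrow (y_i)\in\Lambda_U^n$ goes through verbatim since the exponents $a_i^{r,s}$ are integers. For the converse, given a solution $(y_i)\in\Lambda_U^n$, the entries of $B=A^{-1}$ are only rational; but $\Lambda$ is algebraically closed (Lemma~A.1 of \cite{FOOOToric1}), so $k$-th roots exist, and any $k$-th root of an element of $\Lambda_U$ again lies in $\Lambda_U$ because $\frak{v}_T(w)=\tfrac{1}{k}\frak{v}_T(w^k)=0$. Choosing such roots produces $(y_{r,s})\in\Lambda_U^n$ with $\prod_{r,s} y_{r,s}^{a_i^{r,s}}=y_i$, and your chain-rule identity (which only needs $A$ invertible over $\Lambda$) then gives the vanishing of all $y_{r,s}\,\partial\frak{PO}_\frak{b}^\u/\partial y_{r,s}$ at that point.
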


We remark that the number of solutions in \eqref{y_l,j} might be different from that in \eqref{y_i}. Based on Lemma~\ref{solutionandbasechange}, in the sense of the existence of solutions, two systems are interchangeably used. One of main results in \cite{FOOOToric2} are stated as follows. 

\begin{theorem} [See Section 11 in \cite{FOOOToric2}] \label{FOOOToric2Main} For $\u \in \text{Int}(P)$, the followings are equivalent.
\begin{enumerate}
\item A generalized leading term equation at $\u$ of $P$ in ~\eqref{genleadingterm} admits a solution $\{ (y_{r,s}) : y_{r,s} \in \C^* \}$ for some $\frak{b} \in \scr{A}(\Lambda_0)$.
\item There exists a bulk-parameter $\frak{b}^\prime = \sum_{j=1}^m \frak{b}_j^\prime \scr{D}_j \in \scr{A}(\Lambda_0)$ such that $\frak{b}_j - \frak{b}^\prime_j \in \Lambda_+$ for all $j$ and $\frak{PO}^\u_{\frak{b}^\prime}$ has a critical point on $\left( \Lambda_U \right)^n$.
\item There exist a cochain $b \in H^1 (L(\u); \Lambda_0)$ and a bulk-parameter $\frak{b}^\prime = \sum_{j=1}^m \frak{b}_j^\prime \scr{D}_j \in \scr{A}(\Lambda_0)$ such that $\frak{b}_j - \frak{b}^\prime_j \in \Lambda_+$ for all $j$ and the bulk-deformed Floer cohomology $HF((L(\u), (\frak{b}^\prime, b)), (L(\u), (\frak{b}^\prime, b)); \Lambda_0)$ is isomorphic to $H(T^n; \Lambda_0)$. 
\end{enumerate}
\end{theorem}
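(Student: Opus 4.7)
The plan is to prove the three-way equivalence by deriving it from Theorem~\ref{FOOOToric2Main1} combined with the bulk-deformation formula displayed just before Definition~\ref{generalizedleadingtermequation}. The key observation is that bulk-deformation by $\scr{A}(\Lambda_0)$ only modifies the complex coefficients $c_{(l,a)} = \exp(\frak{b}_{(l,a),0})$ multiplying each monomial $\y^{\v_{(l,a)}}$ at each energy level, while leaving the exponent vectors $\v_{(l,a)}$ and the energy levels $S_l$ intact. Thus passing from the leading term equation \eqref{leadingterm} to the generalized leading term equation \eqref{genleadingterm} amounts to replacing the coefficient $1$ of each monomial by an arbitrary nonzero complex number $c_{(l,a)}$, which provides exactly the flexibility needed.

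For $(3) \Leftrightarrow (2)$, I would imitate the proof of Theorem~\ref{criticalandnontrivial} in the presence of a bulk-parameter. The unital curved $A_\infty$-algebra $(H^\bullet(L(\u); \Lambda_0), \{\frak{m}^k_{\frak{b}^\prime}\})$ is canonical by the toric analogue of Theorem~\ref{Ainftyrelationsum}(3), so the $A_\infty$-relation combined with unitality of $\textup{PD}[L(\u)]$ reduces the equation $\frak{m}^1_{\frak{b}^\prime,b}\circ\frak{m}^1_{\frak{b}^\prime,b}=0$ and the nonvanishing of $HF^\bullet$ to the condition $d\frak{PO}^\u_{\frak{b}^\prime}(b)=0$ at $b$. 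The argument is formally identical to that in \cite{FOOOToric1} once one works inside the bulk-deformed $A_\infty$-structure.

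For $(2) \Rightarrow (1)$, given a critical point of $\frak{PO}^\u_{\frak{b}^\prime}$ on $(\Lambda_U)^n$, I would first apply the $\scr{A}(\Lambda_+)$-deformation of Lemma~\ref{potentialfunctionbulkdeformb+} to a further adjusted parameter $\frak{b}^{\prime\prime}$ (with $\frak{b}^{\prime\prime}-\frak{b}^\prime \in \scr{A}(\Lambda_+)$) to eliminate all contributions above the leading layers, reducing $\frak{PO}^\u_{\frak{b}^{\prime\prime}}$ to $\sum_l (\frak{PO}^\u_{\frak{b}^{\prime\prime}})_l T^{S_l}$. Since the critical point persists under this adjustment and the valuations force each $T^{S_l}$-coefficient in the gradient to vanish independently, reduction modulo $T$ yields a $(\C^*)^n$-solution of the generalized leading term equation, with coefficients $c_{(l,a)}$ encoded by $\frak{b}^{\prime\prime}_{(l,a),0}$.

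The hard direction is $(1) \Rightarrow (2)$, which is the main obstacle. The approach is an inductive lift across the energy levels $S_1 < S_2 < \cdots < S_{\kappa(\u)}$, in the spirit of \cite{FOOOToric2}. Starting from a $\C^*$-solution $(y_{r,s}^{(0)})$ of \eqref{genleadingterm}, I would choose $\frak{b} \in \scr{A}(\Lambda_0)$ realizing the prescribed coefficients $c_{(l,a)}$ and attempt to construct $y_{r,s} \in \Lambda_U$ satisfying \eqref{y_l,j} for $\frak{PO}^\u_\frak{b}$ by successive approximation in powers of $T$. The linearization at each energy level is the Jacobian of the $\C^*$-system restricted to the variables $\{y_{l,s}\}_{s=1}^{d_l}$, which is nondegenerate thanks to the choice of basis \eqref{ers*}: by construction $(\frak{PO}^\u_0)_l \in \Z[y_{1,1}^\pm,\dots,y_{l,d_l}^\pm]$ genuinely involves the new variables $y_{l,1},\dots,y_{l,d_l}$, ensuring that the $l$-th block of the leading term equation can be inverted for $(y_{l,s})$ in terms of the previously solved $(y_{r,s})_{r<l}$. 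A Newton-type iteration in the $T$-adic topology of $\Lambda_0$ then converges, possibly after absorbing error terms into a further $\Lambda_+$-adjustment of $\frak{b}$, producing the desired critical point on $(\Lambda_U)^n$. The delicate point is ensuring that corrections introduced at level $l$ do not spoil the equations already solved at lower levels, which is precisely what the triangular structure of \eqref{wherePOul} guarantees.
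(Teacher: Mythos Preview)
The paper does not prove this theorem: it is stated with the attribution ``See Section 11 in \cite{FOOOToric2}'' and is used as a black box in Section~\ref{section:Bulk}. So there is no ``paper's own proof'' to compare against; you are sketching the argument from \cite{FOOOToric2} itself.

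Your outline for $(2)\Leftrightarrow(3)$ and $(2)\Rightarrow(1)$ is broadly in line with how \cite{FOOOToric2} proceeds. The real issue is your $(1)\Rightarrow(2)$. You assert that ``the linearization at each energy level is the Jacobian of the $\C^*$-system restricted to the variables $\{y_{l,s}\}_{s=1}^{d_l}$, which is nondegenerate thanks to the choice of basis \eqref{ers*}.'' This is not justified: the choice of basis ensures only the block-triangular structure \eqref{wherePOul}, i.e.\ that $(\frak{PO}^\u_0)_l$ involves no $y_{r,s}$ with $r>l$. It does \emph{not} force the $d_l\times d_l$ Jacobian block $\bigl(\partial_{y_{l,s}}\partial_{y_{l,t}}(\frak{PO}^\u_\frak{b})_l\bigr)$ to be invertible at the given $\C^*$-solution. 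The solution of the generalized leading term equation may well be a degenerate critical point of some $(\frak{PO}^\u_\frak{b})_l$, so a Newton iteration in the $y$-variables can fail to converge.

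The mechanism in \cite{FOOOToric2} is different: rather than iterating on the solution, one iterates on the \emph{bulk parameter}. The error terms appearing at each $T$-adic stage are of the form (polynomial in the $z_j=\y^{\v_j}T^{l_j(\u)}$)$\cdot T^{\rho}$ with $\rho>0$, and by adjusting the $\Lambda_+$-components $\frak{b}_{j,+}$ one can absorb these errors into the potential itself, so that the original $\C^*$-solution (now regarded in $\Lambda_U$) remains an \emph{exact} critical point of the successively corrected $\frak{PO}^\u_{\frak{b}'}$. This is why the statement of $(2)$ allows $\frak{b}'-\frak{b}\in\scr{A}(\Lambda_+)$: the bulk parameter, not the solution, carries the correction. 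Your sketch mentions ``possibly after absorbing error terms into a further $\Lambda_+$-adjustment of $\frak{b}$'' only as an afterthought, but this absorption is the entire engine of the proof, and it works without any nondegeneracy hypothesis on the leading-term Jacobian.
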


\begin{corollary} [See Section 11 in \cite{FOOOToric2}] \label{FOOOToric2MainCor} If a generalized leading term equation of $\u$ of $P$ admits a solution on $\C^*$ for some $\frak{b} \in \scr{A}(\Lambda_0)$, the toric fiber $L(\u)$ is non-displaceable. 
\end{corollary}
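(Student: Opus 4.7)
The plan is to combine Theorem~\ref{FOOOToric2Main} with the standard non-vanishing criterion for non-displaceability in Lagrangian Floer theory; the corollary is essentially a packaging of these two inputs, so I would keep the argument short and structured around two steps.

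First, I would invoke the implication $(1) \Rightarrow (3)$ of Theorem~\ref{FOOOToric2Main}. By hypothesis there is some $\frak{b} \in \scr{A}(\Lambda_0)$ for which the generalized leading term equation at $\u$ admits a solution in $(\C^*)^n$. Theorem~\ref{FOOOToric2Main} then yields a bulk parameter $\frak{b}^\prime = \sum_j \frak{b}^\prime_j \scr{D}_j \in \scr{A}(\Lambda_0)$ with $\frak{b}_j - \frak{b}^\prime_j \in \Lambda_+$ for every $j$, together with a cochain $b \in H^1(L(\u); \Lambda_0)$, such that
$$
HF^\bullet\bigl((L(\u), (\frak{b}^\prime, b)), (L(\u), (\frak{b}^\prime, b)); \Lambda_0\bigr) \simeq H^\bullet(T^n; \Lambda_0),
$$
and in particular this bulk-deformed self-Floer cohomology is nonzero.

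Second, I would deduce non-displaceability by the usual contradiction argument. Suppose for contradiction that a Hamiltonian diffeomorphism $\phi$ displaces $L(\u)$, i.e. $\phi(L(\u)) \cap L(\u) = \emptyset$. Then the chain complex computing the bulk-deformed Floer cohomology between $(L(\u), (\frak{b}^\prime, b))$ and $(\phi(L(\u)), \phi_*(\frak{b}^\prime, b))$ has no generators, so its cohomology vanishes. By Hamiltonian invariance of bulk-deformed Lagrangian Floer cohomology, established in Section~8 of \cite{FOOOToric2}, this vanishing group must be isomorphic to the self-Floer cohomology computed above, contradicting $H^\bullet(T^n; \Lambda_0) \neq 0$. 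Hence no such $\phi$ exists, and $L(\u)$ is non-displaceable.

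The only nontrivial input is the Hamiltonian invariance of bulk-deformed Floer cohomology over the Novikov ring $\Lambda_0$, which requires constructing continuation chain maps compatible simultaneously with the bulk parameter $\frak{b}^\prime$ and the bounding cochain $b$; this is the substantive part of the machinery developed in \cite{FOOOToric2} and is taken here as a black box. With that invariance granted, the corollary is an immediate consequence of Theorem~\ref{FOOOToric2Main} and the trivial observation that disjoint Lagrangians have empty Floer complex, and no further computation is needed.
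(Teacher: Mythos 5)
Your argument takes essentially the same route as the paper, which states this corollary without proof as a direct consequence of Theorem~\ref{FOOOToric2Main} combined with the non-vanishing-implies-non-displaceability principle for bulk-deformed Floer cohomology (cited to Section~8 of \cite{FOOOToric2}). Invoking the implication $(1)\Rightarrow(3)$ of Theorem~\ref{FOOOToric2Main} to produce a nonzero bulk-deformed Floer cohomology and then appealing to the Floer-theoretic displacement obstruction is exactly the intended logic.

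One point deserves a bit more care than your phrasing suggests. You say that by Hamiltonian invariance the vanishing Floer group for a displacing $\phi$ must be isomorphic \emph{over $\Lambda_0$} to $HF^\bullet((L(\u),(\frak{b}',b)),(L(\u),(\frak{b}',b));\Lambda_0)$. That is not literally true: Floer cohomology over $\Lambda_0$ is \emph{not} invariant under Hamiltonian isotopy; only the version over the field $\Lambda$ is, and what Fukaya--Oh--Ohta--Ono actually prove in Section~8 of \cite{FOOOToric2} is a torsion-threshold estimate relating the torsion exponent of $HF(\cdot;\Lambda_0)$ to the Hofer norm of the displacing Hamiltonian. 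The way the argument genuinely closes is that since $HF \simeq H^\bullet(T^n;\Lambda_0)$ is a free $\Lambda_0$-module, it survives tensoring with $\Lambda$, and the $\Lambda$-coefficient version of Hamiltonian invariance (or, equivalently, the torsion-threshold bound) then rules out disjointness. Since you flag the invariance statement as a black box delegated to \cite{FOOOToric2}, your proof is acceptable as written, but the precise mechanism is the $\Lambda$-coefficient (or torsion-exponent) statement, not $\Lambda_0$-invariance.
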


\begin{definition} [Definition 9.7 in \cite{FOOOSurv}] \label{stronglybulkbalanced} 
A toric fiber $L(\u)$ is called \emph{strongly bulk-balanced}\footnote{Here, we stick to use the terminology from \cite{FOOOSurv}. However, the authors do not know whether a strongly bulk-balanced fiber is bulk-balanced or not. In this sense, the terminology might have a chance to be misleading.} if a generalized leading term equation at $\u$ admits a solution on $\left( \C^* \right)^n$ for some $\frak{b} \in \scr{A}(\Lambda_0)$ as in Theorem~\ref{FOOOToric2Main}. 
\end{definition}

\section{Proof of Theorem~\ref{THEOREMA}}\label{ProofA}

The goal of this section is to prove the main theorem of this paper. 

\begin{theorem}[Theorem~\ref{THEOREMA}] \label{maintheroem1}
Let $X$ be the compact symplectic toric manifold determined by a moment polytope $P$. For a point $\u$ in the interior of $P$, the followings are equivalent:
\begin{enumerate}
\item The fiber $L(\u)$ is strongly bulk-balanced (See Definition~\ref{stronglybulkbalanced}).
\item The point $\u$ is contained in the intersection of tropicalizations $\, \Trop(P, \m)$ relative to $\m$ over all lattice points $\m \in M$. 
\item The point $\u$ is contained in the intersection of tropicalizations  $\, \Trop(P, \m)$ relative to $\m$ over all primitive lattice $\m \in M$ that is orthogonal to an $(n-1)$-dimensional subspace generated by facet normal vectors.  
\end{enumerate}
\end{theorem}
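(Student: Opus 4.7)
The plan is to translate condition (1) into a combinatorial condition on the facet normals stratified by energy level, and then obtain the three-way equivalence as $(1) \Leftrightarrow (2)$ together with $(3) \Rightarrow (1)$, since $(2) \Rightarrow (3)$ is tautological.

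First I reformulate strong bulk-balancedness. By Lemma~\ref{potentialfunctionbulkdeformb+} and \eqref{llevelpotential}, the level-$l$ piece of the bulk-deformed potential has the form $(\frak{PO}_\frak{b}^\u)_l = \sum_{j : l_j(\u) = S_l} c_j \y^{\v_j}$, where each $c_j = \exp(\frak{b}_{(l,a), 0}) \in \C^*$ is a free parameter as $\frak{b}$ ranges over $\scr{A}(\Lambda_0)$. The generalized leading term equation is triangular in structure: the equations at level $l$ involve only the new variables $y_{l,s}$, while $(\frak{PO}_\frak{b}^\u)_l$ itself uses $y_{r,s}$ with $r \leq l$. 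Solving iteratively in $l$ and setting $\alpha_j := c_j \y^{\v_j}$, the level-$l$ equations read $\sum_j \alpha_j v_j^{l,s} = 0$ for each $s = 1, \dots, d_l$, i.e., the projection of $\sum_j \alpha_j \v_j$ onto $A_l^\perp/A_{l-1}^\perp$ vanishes. Since we can fix any $y_{l,s} \in \C^*$ and then recover $c_j = \alpha_j / \y^{\v_j}$, condition (1) becomes the linear-algebra statement $(\star)$: \emph{for every $l = 1, \dots, \kappa(\u)$, there exist $\alpha_j^l \in \C^*$ (for $j$ with $l_j(\u) = S_l$) such that $\sum_j \alpha_j^l \v_j \in A_{l-1}^\perp$.}

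For $(1) \Rightarrow (2)$, fix $\m \in M$ and let $l^*$ be the smallest level with $A_{l^*}^\perp \not\subseteq \m^\perp$. Then $A_{l^*-1}^\perp \subseteq \m^\perp$, so pairing the relation of $(\star)$ at level $l^*$ with $\m$ gives $\sum_j \alpha_j^{l^*} \langle \m, \v_j \rangle = 0$. At least one level-$l^*$ normal satisfies $\langle \m, \v_j \rangle \neq 0$ by minimality of $l^*$, and since every $\alpha_j^{l^*}$ is nonzero, at least two terms must be nonzero. Proposition~\ref{descriptionoftropcurve} then places $\u$ in $\Trop(P, \m)$.

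For $(3) \Rightarrow (1)$, suppose $(\star)$ fails at some level $l$ with some index $j_0$, so $W := \langle \v_j : l_j(\u) \leq S_l,\ j \neq j_0 \rangle$ is a proper subspace of $A_l^\perp$ and $\v_{j_0} \notin W$. I extend $W$ to a primary subspace $H$ of dimension $n-1$ still omitting $\v_{j_0}$ as follows: at each step, if the current span $W'$ is spanned by facet normals, has dimension less than $n-1$, and still excludes $\v_{j_0}$, then $W' + \langle \v_{j_0} \rangle$ has dimension at most $n-1$, so because compactness of $P$ forces the full collection of facet normals to span $\R^n$, some facet normal $\v$ lies outside $W' + \langle \v_{j_0} \rangle$; adjoining $\v$ to $W'$ keeps $\v_{j_0}$ out of the new span. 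Taking $\m$ to be a primitive lattice vector orthogonal to $H$, one has $A_{l-1}^\perp \subseteq W \subseteq \m^\perp$ and $\v_j \in \m^\perp$ for every level-$l$ index $j \neq j_0$, whereas $\v_{j_0} \notin \m^\perp$. Thus $l$ is the smallest level with $A_l^\perp \not\subseteq \m^\perp$ and exactly one level-$l$ facet normal lies outside $\m^\perp$, so Proposition~\ref{descriptionoftropcurve} forces $\u \notin \Trop(P, \m)$, contradicting (3).

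The most delicate step is the reformulation of (1) as $(\star)$: one must use both the triangular shape of the generalized leading term equation and the freedom of the bulk coefficients $c_j$ to reduce solvability level by level to the existence of a linear relation among the level-$l$ normals modulo $A_{l-1}^\perp$ with all nonzero coefficients. The extension argument in $(3) \Rightarrow (1)$ is essentially a dimension count, but depends crucially on compactness of $P$ to ensure facet normals span $\R^n$, providing room to build the primary subspace $H$ within the facet-normal arrangement.
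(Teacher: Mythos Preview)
Your proof is correct and somewhat more streamlined than the paper's. The key difference is your explicit reformulation of strong bulk-balancedness as the linear-algebra condition $(\star)$: for every level $l$ there exist nonzero scalars $\alpha_j$ with $\sum_j \alpha_j \v_j \in A_{l-1}^\perp$. The paper never isolates $(\star)$; instead it proves $(1)\Rightarrow(2)$ and $(2)\Rightarrow(1)$ by carefully chosen coordinate systems $\{e^*_{r,s}\}$, with the implication $(2)\Rightarrow(1)$ relying on a Baire category argument to produce a generic $(\frak{y}_{r,s})$ at which all the remainder derivatives $\partial\frak{P}_r/\partial y_{r,s}$ are nonvanishing, after which the bulk parameters $c_{(r,s)}$ are solved for. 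Your substitution $\alpha_j = c_j\y^{\v_j}$ makes this transparent: it linearizes the generalized leading term equations level by level and shows directly that solvability over $(\C^*)^n$ is equivalent to $(\star)$, eliminating both the coordinate gymnastics and the Baire step. Your extension of $W$ to a primary subspace avoiding $\v_{j_0}$ is the same dimension-count idea as the paper's Lemma~\ref{2ndlemmafortheoremA}, but you apply it to prove $(3)\Rightarrow(1)$ rather than $(3)\Rightarrow(2)$.

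One step deserves a sentence of justification: you assert that failure of $(\star)$ at level $l$ produces an index $j_0$ with $\v_{j_0}\notin W = \langle \v_j : l_j(\u)\le S_l,\ j\neq j_0\rangle$. This is the linear-algebra fact that a finite family of vectors in a $\C$-vector space admits a linear relation with \emph{all} coefficients nonzero if and only if every vector lies in the span of the others (the ``only if'' is immediate; for ``if'', the relation space is not a finite union of the proper coordinate-hyperplane subspaces since $\C$ is infinite). Applied to the images $\bar\v_j \in A_l^\perp/A_{l-1}^\perp$, this gives exactly the $j_0$ you need. This is standard but not entirely obvious, and the paper's argument hides it inside the coordinate construction; making it explicit would tighten your writeup.
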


By Corollary~\ref{FOOOToric2MainCor} and Theorem~\ref{maintheroem1}, we have the following Corollary. 

\begin{corollary} \label{maincorollary}
The intersection of tropicalizations $\, \Trop(P, \m)$ in Theorem~\ref{maintheroem1} (3) in the interior of $P$
 lifts to non-displaceable toric fibers.
\end{corollary}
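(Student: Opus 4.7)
The proof proposal for Corollary~\ref{maincorollary} is essentially a two-step concatenation of results already established or assumed in the excerpt, so the plan is short and mostly bookkeeping rather than a genuinely new argument. The plan is to fix an interior point $\u \in \textup{Int}(P)$ lying in the intersection $\bigcap_{\m} \Trop(P, \m)$ taken over all primitive lattice vectors $\m \in M$ perpendicular to some primary subspace for $P$, and then trace through two results to reach non-displaceability of $L(\u)$.

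First, I would invoke the implication $(3) \Rightarrow (1)$ of Theorem~\ref{maintheroem1}: by hypothesis, $\u$ satisfies condition $(3)$, hence $\u$ satisfies condition $(1)$, which by Definition~\ref{stronglybulkbalanced} means that a generalized leading term equation~\eqref{genleadingterm} at $\u$ of $P$ admits a solution on $(\C^*)^n$ for some bulk parameter $\frak{b} \in \scr{A}(\Lambda_0)$. In other words, the toric fiber $L(\u)$ is strongly bulk-balanced. This step is purely an invocation; the genuine content is hidden inside Theorem~\ref{maintheroem1}, whose proof is deferred to Section~\ref{ProofA}.

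Second, I would apply Corollary~\ref{FOOOToric2MainCor} directly: strongly bulk-balanced fibers are non-displaceable because the existence of a solution to a generalized leading term equation for some $\frak{b} \in \scr{A}(\Lambda_0)$ guarantees, via Theorem~\ref{FOOOToric2Main}, the existence of a bulk parameter $\frak{b}'$ and a bounding cochain $b$ producing a non-vanishing bulk-deformed Floer cohomology $HF^\bullet((L(\u), (\frak{b}', b)), (L(\u), (\frak{b}', b)); \Lambda_0) \simeq H^\bullet(T^n; \Lambda_0)$, which in turn forces $L(\u)$ to be non-displaceable by a Hamiltonian diffeomorphism.

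Since both Theorem~\ref{maintheroem1} and Corollary~\ref{FOOOToric2MainCor} are permitted as known inputs for this corollary, there is no genuine obstacle at this stage; the only thing to check carefully is that the notion of \emph{strongly bulk-balanced} used in condition $(1)$ of Theorem~\ref{maintheroem1} is literally the hypothesis of Corollary~\ref{FOOOToric2MainCor}, which it is by Definition~\ref{stronglybulkbalanced}. The real work lies in Theorem~\ref{maintheroem1} itself, whose proof needs to produce the solution to a generalized leading term equation from the purely combinatorial hypothesis that $\u$ lies in the relevant intersection of tropicalizations relative to lattice directions.
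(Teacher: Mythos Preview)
Your proposal is correct and matches the paper's own justification exactly: the paper simply states that the corollary follows from Theorem~\ref{maintheroem1} together with Corollary~\ref{FOOOToric2MainCor}, which is precisely your two-step concatenation $(3)\Rightarrow(1)$ followed by strongly bulk-balanced $\Rightarrow$ non-displaceable.
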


We begin by proving the two lemmas.

\begin{lemma} \label{bulkimpliestropinter}
If the toric fiber $L(\u)$ over a point $\u \in \textup{Int}(P)$ is strongly bulk-balanced, then $\u$ is in $\Trop \left(P, \m \right)$ for any $\m \in M$. 
\end{lemma}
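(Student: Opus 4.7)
The plan is to translate the desired containment via Proposition~\ref{descriptionoftropcurve}: for nonzero $\m$, $\u \in \Trop(P,\m)$ holds if and only if the minimum $\min\{l_j(\u) : \langle \m, \v_j\rangle \neq 0\}$ is attained at two or more distinct indices. Since facets at a common level $l$ share the value $l_j(\u)=S_l$, it suffices to produce two distinct indices $a_1 \neq a_2$ at the smallest level $l^*$ that contains some facet $(l,a)$ with $\langle \m, \v_{(l,a)}\rangle \neq 0$; such an $l^*$ exists because the $\v_j$ span $N_\R$.

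The key computation will pair $\m$ against the level-$l^*$ generalized leading term equations. Fix a bulk parameter $\frak{b} \in \scr{A}(\Lambda_0)$ and a solution $(y_{r,s}) \in (\C^*)^n$ of \eqref{genleadingterm}. Using the basis $\{e^*_{r,s}\}$ of Section~\ref{subsectionleadingtermequation} and its dual basis $\{e_{r,s}\}$ of $M_\R$, write $\m = \sum_{r,s} m_{r,s}\, e_{r,s}$. Minimality of $l^*$ forces $\m$ to annihilate every $\v_{(r,a)}$ with $r<l^*$, hence all of $A^\perp_{l^*-1}$, which by the choice of basis equals $\langle e^*_{r,s}: r < l^*\rangle$; therefore $m_{r,s}=0$ for $r<l^*$. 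Combined with the containment $\v_{(l^*,a)} \in \bigoplus_{r \leq l^*,\, s} \Z\, e^*_{r,s}$ coming from the basis choice, this yields
\[
\langle \m, \v_{(l^*,a)}\rangle \;=\; \sum_{s=1}^{d_{l^*}} m_{l^*, s}\, v^{l^*, s}_{(l^*, a)} \qquad (a=1,\dots,a_{l^*}).
\]

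Now form the $\C$-linear combination $\sum_{s} m_{l^*, s} \cdot y_{l^*, s}\,\partial (\frak{PO}_\frak{b}^\u)_{l^*}/\partial y_{l^*, s}$. Each summand vanishes at $(y_{r,s})$ by \eqref{genleadingterm}, so the full sum vanishes; on the other hand, a direct computation from \eqref{llevelpotential} rewrites it as $\sum_{a=1}^{a_{l^*}}\langle \m, \v_{(l^*,a)}\rangle\, c_{(l^*,a)}\,\y^{\v_{(l^*,a)}}$, where $c_{(l^*,a)} = \exp(\frak{b}_{(l^*,a),0}) \neq 0$. If only one index $a^*$ at level $l^*$ satisfied $\langle \m, \v_{(l^*,a^*)}\rangle \neq 0$, this sum would collapse to a single nonzero monomial evaluated at a point of $(\C^*)^n$, contradicting its vanishing. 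Hence at least two facets at level $l^*$ pair non-trivially with $\m$, completing the reduction.

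The main point to handle with care — rather than a genuine obstacle — is that the bulk-deformation only multiplies each monomial of $(\frak{PO}_0^\u)_{l^*}$ by a nonzero scalar $c_{(l^*,a)}$, so the final contradiction is purely combinatorial once the basis $\{e^*_{r,s}\}$ is aligned with the flag $A_0^\perp \subsetneq A_1^\perp \subsetneq \cdots$. This alignment, recalled in Section~\ref{subsectionleadingtermequation}, is precisely what turns the minimality of $l^*$ into the linear-algebraic statement $m_{r,s}=0$ for $r<l^*$, and thereby allows the level-$l^*$ equations alone to carry the full argument.
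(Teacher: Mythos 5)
Your proof is correct, and it takes a route that is recognizably related to but genuinely different from the paper's. The paper also reduces to the claim that at least two facets at the critical level $l^*=\nu+1$ pair non-trivially with $\m$, and it also argues by contradiction assuming there is only one such facet. But where you part ways is in how the contradiction is extracted: the paper constructs a $\m$-dependent basis $\{e^*_{r,s}\}$, adding the requirement that $e^*_{\nu+1,s}$ for $s\geq 2$ be orthogonal to $\m$ and that $\v_{(\nu+1,2)},\dots,\v_{(\nu+1,a_{\nu+1})}$ avoid the $e^*_{\nu+1,1}$ direction; with that choice the single equation $y_{\nu+1,1}\,\partial(\frak{PO}_\frak{b}^\u)_{\nu+1}/\partial y_{\nu+1,1}$ collapses to one Laurent monomial, contradicting Lemma~\ref{solutionandbasechange}. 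You instead keep an arbitrary flag-aligned basis, write $\m=\sum m_{r,s}e_{r,s}$ in the dual basis, observe $m_{r,s}=0$ for $r<l^*$, and take the $\m$-weighted combination $\sum_s m_{l^*,s}\, y_{l^*,s}\,\partial(\frak{PO}_\frak{b}^\u)_{l^*}/\partial y_{l^*,s}$, which equals $\sum_a c_{(l^*,a)}\langle\m,\v_{(l^*,a)}\rangle\,\y^{\v_{(l^*,a)}}$; under the hypothesis for contradiction this is a single nonzero monomial yet must vanish on the purported solution. The paper's version can be seen as the special case of yours in which the basis is rotated so that $\m$ is parallel to $e_{\nu+1,1}$, making your linear combination coincide (up to scale) with one of the listed equations. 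Your formulation is cleaner in that it avoids the $\m$-dependent basis construction and the associated verification that such a basis exists, and it makes the mechanism---pairing $\m$ against the level-$l^*$ gradient---completely transparent; the paper's version has the small advantage of invoking the system \eqref{genleadingterm} literally rather than a linear combination, so it reads off the contradiction directly from one displayed equation. One small point worth stating explicitly in your write-up: the monomials $\y^{\v_{(l^*,a)}}$ are pairwise distinct (the $\v_{(l^*,a)}$ are distinct because the facet presentation is non-redundant), so no hidden cancellation can rescue the vanishing when exactly one coefficient $\langle\m,\v_{(l^*,a^*)}\rangle$ is nonzero---though in fact you do not even need distinctness there, since only one term survives anyway.
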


\begin{proof} 
Suppose that we are given a strongly bulk-balanced fiber $L(\u)$. We then consider the real vector space $A_l^\perp:= \langle \v_{(1,1)}, \cdots, \v_{(1,a_1)}, \cdots, \v_{(l,1)}, \cdots, \v_{(l,a_l)} \rangle$ defined in ~\eqref{Aperp} and the ascending chain of real vector spaces $A_0^\perp = \{0\} \subseteq A_1^\perp \subseteq \cdots \subseteq A_{\kappa(\u)}^\perp = N_\R$. For simplicity, let $\kappa := \kappa(\u)$.
For each space $A_l^\perp$, we have the real vector space $A_l$ orthogonal to $A_l^\perp $ given by
$$
A_l :=\{ \m \in M : \langle \m, \v \rangle = 0 \text{ for all } \v \in A_l^\perp\}.
$$ 
We then have the descending chain of real vectors space $M_\R = A_0 \supseteq A_1 \supseteq \cdots \supseteq A_\kappa = \{ 0 \}$. 

If $\m$ is the zero vector in $M$, then $\Trop (\m) = M_\R$ so that the position $\u$ is obviously in $\Trop  (\m)$. For any nonzero lattice point $\m$, there exists a unique integer $\nu$ with $0 \leq \nu \leq \kappa-1$ such that $\m$ is contained in $A_{\nu} \backslash A_{\nu+1}$. Since $\m \in A_\nu$, we have
\begin{equation}\label{lemmaequation1}
\langle \m, \v_{(1,1)} \rangle = \cdots = \langle \m, \v_{(1, a_1)} \rangle = \cdots = \langle \m, \v_{(\nu, 1)} \rangle = \cdots = \langle \m, \v_{(\nu, a_\nu)} \rangle = 0,
\end{equation}
and thus all equations $l_{(r,s)}$'s ranging over $ 1 \leq r \leq \nu$ and $1 \leq s \leq a_r$ are \emph{not} involved in the tropicalization $\Trop \left(\m \right)$. Since $\m \notin A_{\nu+1}$, there exists at least one $s$ with $1 \leq s \leq a_{\nu+1}$ such that $\langle \m, \v_{(\nu+1, s)} \rangle \neq 0$.

We claim further that there exist at least two integers $s_1$ and $s_2$ with $1 \leq s_1 < s_2 \leq a_{\nu+1}$ such that $\langle \m, \v_{(\nu+1, s_1)} \rangle \neq 0$ and $\langle \m, \v_{(\nu+1, s_2)} \rangle \neq 0$ once the existence of nonzero solution of the generalized leading term equation ~\eqref{genleadingterm} is assumed. For a contradiction, suppose that $\langle \m, \v_{(\nu+1, 1)} \rangle \neq 0$ and $\langle \m, \v_{(\nu+1,s)} \rangle = 0$ for all $s > 1$ by changing a numbering of $s$ if necessary. We shall find a coordinate system $\{ y_{r,s} \}$ that does \emph{not} admit a nonzero solution in order to establish a contradiction to Lemma~\ref{solutionandbasechange} saying that the generalized leading term equation must have a solution on $(\C^*)^n$ regardless of choices of coordiante system. 

We now take a basis $\{e_{r,s}^* \}$ of $N_\Q$ as in Section~\ref{subsectionleadingtermequation} by requiring the following conditions:
\begin{enumerate}
\item For each $l$, $\left\{ e^*_{r,s} : 1 \leq r \leq l, 1 \leq s \leq d_r \right\}$ forms a $\Q$-basis of $A_l^\perp \cap N_\Q$. 
\item Each $\v_j$ is in $\displaystyle \bigoplus_{r=1}^{\kappa} \bigoplus_{s=1}^{d_r} \, \Z \, e^*_{r,s}$.
\end{enumerate} 
We additionally require that $\left\{ e^*_{\nu+1,s} : 2 \leq s \leq d_r \right\}$ is perpendicular to the lattice point $\m$.  More precisely, the additional conditions are as follows:
\begin{enumerate}
\setcounter{enumi}{2}
\item Each $e^{*}_{\nu+1,s}$ with $s \geq 2$ is contained in $\{ \v \in A_{\nu+1}^\perp : \langle \m, \v \rangle = 0 \}$.
\item The vectors $\displaystyle \v_{(\nu+1,2)}, \cdots, \v_{(\nu+1, a_{\nu+1})}$ are in $\displaystyle \bigoplus_{r=1}^{\nu} \bigoplus_{s=1}^{d_r} \Z \, e^*_{r,s} \oplus \bigoplus_{s=2}^{d_{\nu+1}} \Z \, e^*_{\nu+1,s}$.
\end{enumerate}
Since the quotient space of $A_{\nu + 1}^\perp$ by $\left(A_{\nu}^\perp + \langle \v_{(\nu+1, 2)}, \cdots, \v_{(\nu+1, a_{\nu+1})} \rangle \right)$ is one-dimensional by our supposition, such a basis $\{e^*_{r,s} \}$ exists.

From this choice of $\{ e^*_{r,s} \}$, we obtain the corresponding coordinate system $\{y_{r,s} \}$. By the condition (4), a Laurent monomial 
$$
\displaystyle \y^{\v_{(\nu+1, a)}} :=  \prod_{r=1}^{\kappa} \prod_{s=1}^{d_r} y_{r,s}^{{v}^{r,s}_{(\nu+1,a)}}, \quad \text{ where } \v_{(\nu+1,a)} = \sum_{r=1}^{\kappa} \sum_{s=1}^{d_r} {v}^{r,s}_{(\nu+1,a)} e_{r,s}^*.
$$ 
with $a \geq 2$ is expressed in terms of $\{y_{r,s}:  1 \leq r \leq l, 1 \leq s \leq d_r \}$ and $\{ y_{\nu+1, s} : s \geq 2\}$. Therefore, by ~\eqref{llevelpotential}, we observe that  
$$
y_{\nu+1,1} \frac{\pa \left( \frak{PO}_\frak{b}^\u \right)_{\nu+1} } {\pa \, y_{\nu+1,1}}
$$
is a Laurent \emph{monomial}. Thus, it does not admit any nonzero solution and hence the generalizaed leading term equation does not have any solution in $(\C^*)^n$. The claim is now established. 

Consequently, we have at least two $l_{(l+1, s_1)}(\u)$ and $l_{(l+1, s_2)}(\u)$ having same value $S_{l+1}$ and moreover $S_{l+1}$ is indeed the minimum of $\left\{ l_{(r,s)}(\u) : \langle \m, \v_{(r,s)} \rangle \neq 0 \right\}$ because of ~\eqref{lemmaequation1}. Hence, $\u$ is in $\Trop (\m)$ by Proposition~\ref{descriptionoftropcurve}.

\end{proof}

\begin{lemma}\label{2ndlemmafortheoremA}
Let $P$ be an $n$-dimensional polytope in $M_\R \simeq \R^n$. If $\u \notin \Trop(P, \m)$ for some lattice point $\m$, there exists a primitive lattice point $\widetilde{\m}$ orthogonal to an $(n-1)$-dimensional subspace generated by facet normal vectors such that $\u \notin \Trop(P, \widetilde{\m})$
\end{lemma}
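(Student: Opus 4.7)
The plan is to extract from $\m$ a distinguished set of facet normals on which any valid witness must vanish, then enlarge this set to an $(n-1)$-dimensional primary subspace still excluding $\v_{j^*}$; the primitive lattice normal to that subspace will be $\widetilde{\m}$.

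The hypothesis $\u \notin \Trop(P, \m)$ says that $\trop^{P,\m}$ is differentiable at $\u$, so the minimum $\min\{l_j(\u) : \langle \m, \v_j\rangle \neq 0\}$ is attained at a single index $j^*$. Put $s := l_{j^*}(\u)$ and $I := \{j : l_j(\u) \leq s,\ j \neq j^*\}$; uniqueness of $j^*$ in the minimum forces $\langle \m, \v_j\rangle = 0$ for every $j \in I$. Let $W := \operatorname{span}\{\v_j : j \in I\} \subset N_\R$ and $k := \dim W$. Since $\m \in W^\perp$ while $\langle \m, \v_{j^*}\rangle \neq 0$, we have $\v_{j^*} \notin W$ and $k \leq n-1$.

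The central step is to enlarge $W$ to a primary subspace still avoiding $\v_{j^*}$. Since the $n$-dimensional polytope $P$ is compact, the inward facet normals satisfy a strictly positive linear relation $\sum_j c_j \v_j = 0$ with every $c_j > 0$. Passing to the quotient $N_\R / W$ expresses the class of $\v_{j^*}$ as a linear combination of the classes of $\v_j$ with $j \notin I \cup \{j^*\}$, so these classes span $N_\R / W$. Choose a basis from among them, say the classes of $\v_{i_1}, \ldots, \v_{i_{n-k}}$; after relabeling, the class of $\v_{j^*}$ has nonzero coefficient on the class of $\v_{i_{n-k}}$. Then $V := \operatorname{span}(W \cup \{\v_{i_1}, \ldots, \v_{i_{n-k-1}}\})$ has dimension $n-1$, is generated by facet normals, and does not contain $\v_{j^*}$; in other words, $V$ is a primary subspace avoiding $\v_{j^*}$.

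Finally, $V^\perp \subset M_\R$ is a rational line and contains a primitive lattice vector $\widetilde{\m}$. Every $\v_j$ with $j \in I$ lies in $V$, so $\langle \widetilde{\m}, \v_j\rangle = 0$ and these indices drop out of the active set; meanwhile $\langle \widetilde{\m}, \v_{j^*}\rangle \neq 0$ since $\v_{j^*} \notin V$. Any remaining active index $j$ for $\widetilde{\m}$ satisfies $l_j(\u) > s = l_{j^*}(\u)$, so the minimum defining $\trop^{P, \widetilde{\m}}$ at $\u$ is uniquely attained at $j^*$, giving $\u \notin \Trop(P, \widetilde{\m})$. I expect the extension of $W$ to be the main obstacle: the positive linear relation coming from compactness of $P$ is precisely what ensures that dropping $\v_{j^*}$ alone does not shrink the span after passing to the quotient by $W$.
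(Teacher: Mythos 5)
Your argument is correct and follows the same overall strategy as the paper's: identify the unique minimizer $j^*$, build an $(n-1)$-dimensional primary subspace $V$ that absorbs the potentially troublesome normals but excludes $\v_{j^*}$, and take $\widetilde{\m}$ normal to $V$. The two proofs differ in two places, both in your favor for completeness. First, the paper's seed subspace is $A^\perp_{>s} := \langle \v_j : \langle \m, \v_j\rangle = 0\rangle$, the span of \emph{all} normals orthogonal to $\m$, whereas your $W$ is spanned only by those $\v_j$ with $l_j(\u) \le l_{j^*}(\u)$ (a subset, since uniqueness of the minimum forces those to be orthogonal to $\m$). Either seed works: the paper verifies the conclusion via the inclusion $\{j : \langle \widetilde{\m}, \v_j\rangle \ne 0\} \subseteq \{j : \langle \m, \v_j\rangle \ne 0\}$, while you verify it directly by checking that every $\widetilde{\m}$-active index other than $j^*$ has $l_j(\u) > l_{j^*}(\u)$. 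Second, the paper justifies the extension of the seed subspace to dimension $n-1$ avoiding $\v_{j^*}$ only by remarking that $\{\v_j\}$ spans $\R^n$, leaving the reader to fill in the case analysis; your invocation of the strictly positive relation $\sum_j c_j \v_j = 0$ (from compactness of $P$) is a cleaner and more explicit justification, since it shows directly that the class of $\v_{j^*}$ in $N_\R/W$ is already spanned by the remaining classes, and in particular rules out the degenerate situation where $I \cup \{j^*\}$ exhausts all facet indices. So this is essentially the paper's argument, with a sharper seed and a more self-contained spanning step.
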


\begin{proof} Suppose that we are given $\u \notin \Trop(\m)$. We may assume that $l_{1}, \cdots, l_{s}$ are equations that contribute to the tropicalization $\Trop (\m)$ after renumbering the defining equations if necessary. Namely, $\left\{ l_j: \langle \m, \v_{j} \rangle \neq 0 \right\} = \{ l_{1}, \cdots, l_{s} \}$. By Proposition~\ref{descriptionoftropcurve}, there exists an integer $\nu$ with $1 \leq \nu \leq s$ such that the minimum of $ \left\{ l_j (\u): \langle \m, \v_{j} \rangle \neq 0 \right\} $ is attained only by $l_{\nu}(\u)$. Without any loss of generality, we may assume that $\nu = 1$.

If $\m$ is orthogonal to an $(n-1)$-dimensional subspace generated by facet normal vectors, then take a primitive vector $\widetilde{\m}$ of the direction of $\m$. Otherwise, letting $A^\perp_{>s} := \langle \v_{s+1}, \cdots, \v_m \rangle$, we extend the space $A^\perp_{>s}$ to an $(n-1)$-dimensional space not containing $\v_1$ by adding generators $\v_{j_1}, \cdots, \v_{j_r}$ with $1 < j_1 < \cdots < j_r \leq s$. Since $P$ is an $n$-dimensional polytope in $\R^n$, $\{ \v_j : 1 \leq j \leq m \}$ generates the whole space $\R^n$ so that such an extension exists. Choose a primitive vector $\widetilde{\m}$ normal to the space. 

By our choice of $\widetilde{\m}$, it is perpendicular to the $(n-1)$-dimensional space generated by the facet normals $\{ \v_{j_1}, \cdots, \v_{j_r}, \v_{s+1}, \cdots, \v_m \}$. Since
$$
l_1 \in \left\{ l_j: \langle \widetilde{\m}, \v_{j} \rangle \neq 0 \right\} \subset \left\{ l_j: \langle \m, \v_{j} \rangle \neq 0 \right\},
$$
we see that the minimum of $ \left\{ l_j (\u): \langle \widetilde{\m}, \v_{j} \rangle \neq 0 \right\} $ is achieved only by $l_{1}(\u)$. By Proposition~\ref{descriptionoftropcurve}, $\u \notin \Trop(\widetilde{\m})$. 
\end{proof}

Now, we start a proof of Theorem~\ref{maintheroem1}. 

\begin{proof}(of Theorem~\ref{maintheroem1})
Lemma~\ref{bulkimpliestropinter} proves $(1) \Rightarrow (2)$ and Lemma~\ref{2ndlemmafortheoremA} yields $(3) \Rightarrow (2)$. It is obvious that $(2) \Rightarrow (3)$. Thus, it remains to show that (2) implies (1). 

Suppose that we are given a position $\u \in \textup{ Int} (P)$ satisfying 
$$
\u \in \bigcap_{\m \in M} \Trop \left(\m \right).
$$
We now try to find a coordinate system $\{y_{r,s}\}$ making the leading term equation simple so that it contains many terms having a single variable factor. For each integer $l$ with $d_{l} := \lim A^\perp_{l} - \lim A^\perp_{l-1} > 0$, by rearranging $s$ in the $(l+1)$-level if necessary, we may assume 
\begin{align*}
A^\perp_{l-1} &= \langle \v_{(1,1)}, \cdots, \v_{(l-1,a_{l-1})} \rangle \subsetneq \langle \v_{(1,1)}, \cdots, \v_{(l-1,a_{l-1})}, \v_{(l, 1)} \rangle  \\
&\subsetneq \langle \v_{(1,1)}, \cdots, \v_{(l-1,a_{l-1})}, \v_{(l, 1)}, \v_{(l,2)} \rangle \subsetneq \cdots \\
&\subsetneq \langle \v_{(1,1)}, \cdots, \v_{(l-1,a_{l-1})}, \v_{(l, 1)}, \v_{(l,2)}, \cdots, \v_{(l, d_{l})} \rangle = A^\perp_l.
\end{align*}

We take a basis $\{e_{r,s}^* \}$ of $N_\Q$ as in Section~\ref{subsectionleadingtermequation} by requiring the following conditions:
\begin{enumerate}
\item For each $l$, $\left\{ e^*_{r,s} : 1 \leq r \leq l, 1 \leq s \leq d_r \right\}$ forms a $\Q$-basis of $A_l^\perp \cap N_\Q$. 
\item Each $\v_j$ is in $\displaystyle \bigoplus_{r=1}^{\kappa} \bigoplus_{s=1}^{d_r} \, \Z \, e^*_{r,s}$.
\end{enumerate} 
Additionally the following condition is required: 
\begin{enumerate}
\setcounter{enumi}{2}
\item $\v_{(l,s)} \in \N \, e^{*}_{l,s}$ for $1 \leq s \leq d_{l}$.
\end{enumerate}

By our choice of $\left\{ e^*_{r,s} \right\}$, we obtain the corresponding coordinate system $\{y_{r,s} \}$ and observe
$$
\left( \frak{PO}^\u_0 \right)_l = y_{l,1}^{n_{l,1}} + \cdots + y_{l,d_l}^{n_{l,d_l} }+ \frak{P}_l( y_{1,1}, \cdots , y_{l,d_l} )
$$
where $\v_{(l,s)} = n_{l,s} e^*_{l,s}$ for some $n_{l,s} \in \N$ and  $\frak{P}_l( y_{1,1}, \cdots , y_{l,d_l} )$ is a Laurent polynomial in terms of variables $y_{1,1}, \cdots , y_{l,d_l}$. 

Taking a bulk deformation given by
$$
\frak{b} := \sum_{r=1}^{\kappa} \sum_{s=1}^{d_r} \frak{b}_{(r,s)} \cdot \scr{D}_{(r,s)}
$$
with $\frak{b}_{(r,s)} \in \Lambda_0$, due to ~\eqref{llevelpotential}, we obtain
$$
\left( \frak{PO}^\u_\frak{b} \right)_l = c_{(l,1)} y_{l,1}^{n_{l,1}} + \cdots + c_{(l,d_l)} y_{l,d_l}^{n_{l,d_l} }+ \frak{P}_l( y_{1,1}, \cdots , y_{l,d_l} ).
$$
Keep in mind that each $c_{(r,s)} := \exp\left( \frak{b}_{(r,s),0} \right) \in \C^*$ can be chosen arbitrary by modifying $\frak{b}_{(r,s),0}$ where $\frak{b}_{(r,s)} = \frak{b}_{(r,s), 0} + \frak{b}_{(r,s), +}$ with $\frak{b}_{(r,s), 0} \in \C$ and $\frak{b}_{(r,s), +} \in \Lambda_+$. With respect to $\frak{b}$ and $\{y_{r,s}\}$, the generalized leading term equation ~\eqref{genleadingterm} is of the form:
\begin{equation}\label{generalizedleadingtermequationbecomes}
\begin{cases}
\displaystyle y_{1,s} \frac{\pa \left( \frak{PO}^\u_\frak{b} \right)_1 } {\pa \, y_{1,s}} = c_{(1,s)} n_{1,s} \, y_{1,s}^{ n_{1,s}} + y_{1,s} \frac{\pa \, \frak{P}_1 ( y_{1,1}, \cdots , y_{1,d_1} )}{\pa \, y_{1,s}} = 0 \quad \text{ for } s = 1, \cdots , d_1 \\
\displaystyle y_{2,s} \frac{\pa \left( \frak{PO}^\u_\frak{b} \right)_2 } {\pa \, y_{2,s}} = c_{(2,s)} n_{2,s} \, y_{2,s}^{ n_{2,s}} + y_{2,s} \frac{\pa \, \frak{P}_2 ( y_{1,1}, \cdots , y_{2,d_2} )}{\pa \, y_{2,s}} = 0 \quad \text{ for } s = 1, \cdots , d_2 \\
\quad \quad \vdots \\
\displaystyle y_{\kappa, s} \frac{\pa \left( \frak{PO}^\u_\frak{b} \right)_{\kappa} } {\pa \, y_{\kappa,s}} = c_{(\kappa,s)} n_{\kappa,s} \, y_{\kappa,s}^{ n_{\kappa,s}} + y_{\kappa,s} \frac{\pa \, \frak{P}_\kappa ( y_{1,1}, \cdots , y_{\kappa,d_\kappa} )}{\pa \, y_{\kappa,s}} = 0 \quad \text{ for } s = 1, \cdots , d_\kappa
\end{cases}
\end{equation}

We claim that the assumption $\u \in \bigcap_{\m \in M} \Trop \left(\m \right)$ implies that each
$$
y_{r,s} \frac{\pa \, \frak{P}_r ( y_{1,1}, \cdots , y_{r,d_r} )}{\pa \, y_{r,s}}
$$ 
contains at least one term. Suppose to the contrary that $y_{r,s} \frac{\pa \, \frak{P}_r ( y_{1,1}, \cdots , y_{r,d_r} )}{\pa \, y_{r,s}} \equiv 0$ for some $(r,s)$ with $1 \leq r \leq \kappa$ and $1 \leq s \leq d_r$. It yields that any term of $\frak{P}_r ( y_{1,1}, \cdots , y_{r,d_r} )$ must not contain any factors $y_{r,s}^{\pm 1}$. In other words, $\v_{(r,1)}, \cdots, \v_{(r, s-1)},$ $\v_{(r, s+1)},$  $\cdots$ $\v_{(r, a_r)}$ are contained in $\langle e^*_{1,1}, \cdots, e^*_{r,1} , \cdots , e^*_{r, s-1}, e^*_{r, s+1} , \cdots , e^*_{r, d_r} \rangle$. Since $\langle e^*_{1,1}, \cdots,  e^*_{r,1} , \cdots , e^*_{r, s-1}, e^*_{r, s+1} , \cdots , e^*_{r, d_r} \rangle$ has a dimension strictly less than $n$, we can take a lattice point $\m \in M$ such that 
\begin{enumerate}
\item $\m$ is perpendicular to $\langle \v_{(1,1)}, \cdots, \v_{(r-1, a_{r-1})}, \v_{(r, 1)}, \cdots, \v_{(r, s-1)}, \v_{(r, s+1)},  \cdots, \v_{(r, a_r)} \rangle$
\item $\m$ is not perpendicular to $\v_{(r,s)}$.
\end{enumerate}
We then see that $\Trop(\m)$ must \emph{not} contain $\u$ because at $\u$ the minimum is attained by a single equation $l_{(r,s)}$. It contradicts to the choice of $\u$ and hence the claim is asserted.

Now, by invoking the Baire category theorem and the above claim, we can take an $n$-tuple $\{ (\frak{y}_{r,s}) : \frak{y}_{r,s} \in \C^*, 1 \leq r \leq \kappa, \, 1 \leq s \leq d_r \}$ obeying
$$
\displaystyle \frak{y}_{r,s} \frac{\pa \, \frak{P}_r}{\pa \, y_{r,s}} ( \frak{y}_{1,1}, \cdots , \frak{y}_{r,d_r} ) \neq 0.
$$
Since $c_{(r,s)} := \exp\left( \frak{b}_{(r,s),0} \right) \in \C^*$ can be chosen independently, we can determine $c_{(r,s)}$ so that $\{ \frak{y}_{r,s} : 1 \leq r \leq \kappa, \, 1 \leq s \leq d_r \}$ is a solution of the equation ~\eqref{generalizedleadingtermequationbecomes}. Hence $L(\u)$ is strongly bulk-balanced.
\end{proof}

\section{Proof of Theorem~\ref{THEOREMB}}\label{ProofB}

In this section, we prove the following theorem by applying Theorem~\ref{THEOREMA}. 

\begin{theorem}[Theorem~\ref{THEOREMB}] \label{maintheorem2}
Let $X$ be the compact symplectic toric manifold determined by a moment polytope $P$. If the fiber $L(\u)$ over an interior point $\u$ of $P$ is bulk-balanced (See Definition~\ref{bulkbalanced}), then $L(\u)$ is strongly bulk-balanced (See Definition~\ref{stronglybulkbalanced}). 
\end{theorem}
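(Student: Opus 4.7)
The plan is to bootstrap off Theorem~\ref{THEOREMA}, which gives a purely combinatorial characterization of strongly bulk-balanced fibers, and then use the Hausdorff convergence $P^{(i)} \to P$ together with the preservation of the normal fan to pass this combinatorial condition to the limit.

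First, I would note that if the leading term equation \eqref{leadingterm} at $\u^{(i)}$ of $P^{(i)}$ admits a solution on $(\C^*)^n$, then $L(\u^{(i)})$ is automatically strongly bulk-balanced in the toric manifold associated to $P^{(i)}$: the leading term equation is the generalized leading term equation \eqref{genleadingterm} with bulk parameter $\frak{b}=0$ (so that every coefficient $c_{(l,a)}$ equals $1$). Applying the implication (1)$\Rightarrow$(3) of Theorem~\ref{THEOREMA} to each triple $(X^{(i)}, P^{(i)}, \u^{(i)})$ provided by the bulk-balanced hypothesis, I obtain
\[
\u^{(i)} \in \bigcap_{\m} \Trop(P^{(i)}, \m),
\]
where $\m$ ranges over primitive lattice points orthogonal to a primary subspace. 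Crucially, since the normal fan of every $P^{(i)}$ coincides with that of $P$, the primitive facet normals $\v_j$ and thus the primary subspaces are identical for every $i$, so the indexing set of $\m$'s is a fixed finite set (of cardinality at most $\binom{m}{n-1}$). Only the constants $\lambda_j^{(i)}$ vary with $i$, and Hausdorff convergence forces $\lambda_j^{(i)} \to \lambda_j$, hence the affine functions $l_j^{(i)}(\u)=\langle \u,\v_j\rangle - \lambda_j^{(i)}$ converge to $l_j(\u)$ uniformly on compact subsets of $M_\R$.

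The main step is the limiting argument: I must show $\u \in \Trop(P, \m)$ for every relevant $\m$, after which Theorem~\ref{THEOREMA}(3)$\Rightarrow$(1) finishes the proof. Fix such an $\m$. By Proposition~\ref{descriptionoftropcurve}, for each $i$ there exists a pair $j_1(i) \neq j_2(i)$ with $\langle \m, \v_{j_1(i)}\rangle, \langle \m, \v_{j_2(i)} \rangle \neq 0$ and
\[
l_{j_1(i)}^{(i)}(\u^{(i)}) = l_{j_2(i)}^{(i)}(\u^{(i)}) = \min\bigl\{ l_k^{(i)}(\u^{(i)}) : \langle \m, \v_k \rangle \neq 0 \bigr\}.
\]
Because there are only finitely many admissible pairs $(j_1,j_2)$, I can pass to a subsequence along which this pair is constant, say $(j_1,j_2)$. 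Using $\u^{(i)} \to \u$ and the locally uniform convergence $l_k^{(i)} \to l_k$, the equality and minimum inequality survive the limit, giving $l_{j_1}(\u)=l_{j_2}(\u)$ and $l_{j_1}(\u) \leq l_k(\u)$ for every $k$ with $\langle \m, \v_k \rangle \neq 0$. Hence $\u \in \Trop(P, \m)$. Taking a common subsequence through the finite list of $\m$'s then yields $\u \in \bigcap_{\m}\Trop(P, \m)$.

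The only real obstacle is the bookkeeping in the subsequence argument, which is manageable precisely because both the set of relevant $\m$'s and the set of admissible index pairs are finite; this finiteness is what makes it possible to stabilize the combinatorial data achieving the tropical equality before taking the limit. Everything else is a direct application of Theorem~\ref{THEOREMA} and the trivial observation that the leading term equation is a special case of the generalized leading term equation.
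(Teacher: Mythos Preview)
Your proposal is correct and follows the same overall architecture as the paper: apply Theorem~\ref{THEOREMA} to each $(P^{(i)},\u^{(i)})$ to place $\u^{(i)}$ in every $\Trop(P^{(i)},\m)$, pass to the limit to get $\u\in\Trop(P,\m)$, and then apply Theorem~\ref{THEOREMA} in reverse. The difference lies in how the limiting step is carried out. The paper proves, via two sublemmas, that the tropicalizations themselves converge in the Hausdorff metric, $\Trop(P^{(i)},\m)\to\Trop(P,\m)$, and then concludes from $\u^{(i)}\to\u$ that $\u$ lies in the limit set. You instead use the explicit description of $\Trop(P^{(i)},\m)$ from Proposition~\ref{descriptionoftropcurve}: pigeonhole on the finitely many index pairs $(j_1,j_2)$ to stabilize the witnesses along a subsequence, then pass to the limit directly in the equalities and inequalities $l_{j_1}^{(i)}(\u^{(i)})=l_{j_2}^{(i)}(\u^{(i)})\le l_k^{(i)}(\u^{(i)})$. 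Your route is more elementary and avoids the Hausdorff-convergence machinery entirely; the paper's route yields a slightly stronger intermediate statement (convergence of the tropicalizations as sets) that is not needed here but could be of independent use. One minor remark: the ``common subsequence through the finite list of $\m$'s'' is unnecessary, since for each fixed $\m$ the conclusion $\u\in\Trop(P,\m)$ depends only on the limit point $\u$, not on which subsequence was used to reach it.
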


We need two lemmas for proving Theorem~\ref{THEOREMB}. 

\begin{lemma}\label{convergenceoflambdaj}
Let $P$ be that the polytope with the description
$$
\left\{ l_j (\u) := \langle \u, \v_j \rangle - \lambda_j \geq 0 : j = 1, \cdots, m \right\}
$$ satisfying the requirements of ~\eqref{descriptionofpolytope}. We consider a sequence $\{ P^{(i)} \}$ of polytopes each of which has the description
$$
\left\{ l_j^{(i)}(\u) := \langle \u, \v_j \rangle - \lambda_j^{(i)} \geq 0: j = 1, \cdots, m \right\}
$$ 
satisfying the requirements of ~\eqref{descriptionofpolytope}. If the sequence $\left\{P^{(i)}\right\}$ converges to the polytope $P$ with respect to the Hausdorff distance (Definition~\ref{Hausdorffdistance}), then for each $j$, $\left(\lambda_j^{(i)} - \lambda_j\right)$ converges to $0$ as $i \to \infty$. 
\end{lemma}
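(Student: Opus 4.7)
The strategy is straightforward: since the inward normals $\v_j$ are identical for $P$ and for every $P^{(i)}$ by hypothesis, only the constant terms $\lambda_j$ versus $\lambda_j^{(i)}$ differ. Thus convergence of the latter to the former should be readable off from the values of the linear functional $\langle \, \cdot \, , \v_j\rangle$ on corresponding facets, coupled with the quantitative control coming from Hausdorff distance. I would carry out this comparison in both directions to extract the $\liminf$ and $\limsup$ bounds separately.

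For the lower bound $\liminf_i \lambda_j^{(i)} \geq \lambda_j$, for each $i$ select a point $\u^{(i)} \in P_j^{(i)}$, where $P_j^{(i)}$ denotes the $j$-th facet of $P^{(i)}$; this facet is non-empty because the description of $P^{(i)}$ is non-redundant (one of the standing requirements of \eqref{descriptionofpolytope}). By definition of $P_j^{(i)}$ we have $\langle \u^{(i)}, \v_j\rangle = \lambda_j^{(i)}$. Using Hausdorff convergence, I may pick $\u_i \in P$ with $\|\u^{(i)} - \u_i\| \leq d_{\text{Haus}}(P,P^{(i)}) \to 0$, and since $\u_i \in P$, $\langle \u_i, \v_j\rangle \geq \lambda_j$. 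Cauchy--Schwarz then yields $\lambda_j^{(i)} \geq \lambda_j - \|\v_j\|\cdot d_{\text{Haus}}(P,P^{(i)})$, and the lower bound follows by passing $i \to \infty$.

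The reverse inequality $\limsup_i \lambda_j^{(i)} \leq \lambda_j$ is obtained symmetrically: pick any $\u \in P_j$ (again non-empty by non-redundancy), so $\langle \u, \v_j\rangle = \lambda_j$; use Hausdorff convergence to select $\u_i' \in P^{(i)}$ close to $\u$; then $\langle \u_i', \v_j\rangle \geq \lambda_j^{(i)}$ gives $\lambda_j^{(i)} \leq \lambda_j + \|\v_j\|\cdot d_{\text{Haus}}(P,P^{(i)})$. Combining the two bounds proves $\lambda_j^{(i)} \to \lambda_j$ for every $j$.

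There is essentially no substantial obstacle: the argument is a direct one-line estimate in each direction. The only point that warrants a moment's attention is the non-emptiness of the facets $P_j$ and $P_j^{(i)}$ used to supply the sample points; as noted, this is built into the hypothesis that both descriptions satisfy the non-redundancy requirement of \eqref{descriptionofpolytope}, so each nominal facet is in fact a genuine facet of positive dimension. No further hypothesis on the boundedness of $P^{(i)}$ or on the behavior of the Hausdorff distance beyond convergence to zero is needed.
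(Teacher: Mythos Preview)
Your argument is correct and is essentially the same as the paper's: the paper simply asserts the single inequality
\[
d_{\text{Haus}}(P,P^{(i)}) \geq \frac{|\lambda_j-\lambda_j^{(i)}|}{\|\v_j\|}
\]
and concludes, while your two one-sided estimates (each obtained by sampling a point on the relevant facet, approximating it in the other polytope, and applying Cauchy--Schwarz) combine to give exactly this bound. In effect you have written out the justification the paper leaves implicit.
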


\begin{proof}
Observe that the Hausdorff distance between $P^{(i)}$ and $P$ satisfies
$$
d_{\text{Haus}} \left( P, P^{(i)} \right) \geq \frac{\left| \lambda_j - \lambda_j^{(i)} \right|}{\| \v_j \|}
$$ 
where $\| \v_j \| = \sqrt{ (v_{j,1})^2 + \cdots (v_{j,n})^2 }$. Since $d_{\text{Haus}} \left( P, P^{(i)} \right) \to 0$ as $i \to \infty$, we obtain the desired conclusion.   
\end{proof}

\begin{lemma}\label{Hasudorffdistanceoftropicalization}
Let $Q$ be the polyhedron having the description
$$
\left\{ l_j (\u) := \langle \u, \v_j \rangle - \lambda_j \geq 0 : j = 1, \cdots, s \right\}
$$ 
that satisfies the requirements of ~\eqref{descriptionofpolytope}.
We consider a sequence $\{ Q^{(i)} \}$ of polyhedra determined by the following supporting planes
$$
\left\{l_1^{(i)} = l_1 + \delta^{(i)}, l_2^{(i)} = l_2, \cdots, l_s^{(i)} = l_s \right\}
$$ 
where $\delta^{(i)}$ is a real number. Assume that $\delta^{(i)}$ monotonically converges to $0$. Then, the tropicalization $\Trop(Q^{(i)})$ converges to the tropicalization $\Trop (Q)$ with respect to the Hausdorff distance.
\end{lemma}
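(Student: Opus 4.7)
The plan is to prove the quantitative estimate
\[
d_{\textup{Haus}}\bigl(\Trop(Q),\Trop(Q^{(i)})\bigr)\le C\,|\delta^{(i)}|,
\]
with $C>0$ a constant depending only on the fixed normals $\{\v_j\}_{j=1}^{s}$, from which the Hausdorff convergence follows at once since $\delta^{(i)}\to 0$. By Proposition~\ref{explicitdescriptionoftropicalizationoffunction}, $\Trop(Q)$ is the corner locus of the piecewise-linear function $f(\u):=\min_{1\le j\le s}l_j(\u)$, i.e.\ the set of $\u\in\R^n$ at which the minimum is attained by at least two distinct indices; similarly $\Trop(Q^{(i)})$ is the corner locus of $f^{(i)}(\u):=\min\{l_1(\u)+\delta^{(i)},l_2(\u),\ldots,l_s(\u)\}$. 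Observe that $\|f-f^{(i)}\|_{\infty}\le|\delta^{(i)}|$ and that both tropicalizations share the same recession directions, since these depend only on the $\v_j$'s and not on the $\lambda_j^{(i)}$'s; this alone rules out divergence at infinity and ensures the Hausdorff distance is finite.

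The heart of the argument is a pointwise perturbation estimate. Given $\u\in\Trop(Q^{(i)})$ with indices $j_1\ne j_2$ attaining the minimum of $f^{(i)}$ at $\u$, I would split into cases according to whether $1\in\{j_1,j_2\}$. If $1\notin\{j_1,j_2\}$, then $l_{j_1}(\u)=l_{j_2}(\u)\le l_k(\u)$ for all $k\ne 1$; either $l_{j_1}(\u)\le l_1(\u)$ holds directly, so $\u\in\Trop(Q)$ with no movement needed, or $0<l_{j_1}(\u)-l_1(\u)\le|\delta^{(i)}|$ and a shift of $\u$ of size at most $|\delta^{(i)}|/\|\v_1-\v_{j_1}\|$ in the direction of $\v_1-\v_{j_1}$ brings $l_1$ into the minimum set at $\u'$. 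If $j_1=1$, translating $\u$ by $\delta^{(i)}(\v_{j_2}-\v_1)/\|\v_{j_2}-\v_1\|^{2}$ enforces $l_1(\u')=l_{j_2}(\u')$, so $\u'\in\Trop(Q)$. The reverse one-sided bound is entirely symmetric, with the roles of $f$ and $f^{(i)}$ reversed. Uniformity of $C$ follows from $\min_{j\ne k}\|\v_j-\v_k\|>0$, which holds because the $\v_j$'s form a finite set of distinct primitive lattice vectors.

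The main technical obstacle is verifying that the shifted point $\u'$ genuinely lies in the target tropicalization, not merely on some hyperplane $\{l_j=l_k\}$: the tie I engineer concerns only a specific pair of indices, whereas membership in $\Trop(Q)$ requires the tied value to be the minimum over \emph{all} $l_j$. For $\u$ in the relative interior of a top-dimensional cell this is automatic once $|\delta^{(i)}|$ is small, but when $\u$ lies on a higher-codimension face where several inequalities are tight simultaneously, the shift direction must be chosen within the subspace $\{\v:\langle\v,\v_j-\v_k\rangle=0\text{ for all pairs }(j,k)\text{ tying at }\u\}$ so that the other active equalities survive. I plan to treat this by induction on the codimension of the cell of $\Trop(Q^{(i)})$ containing $\u$; whenever no single shift inside the prescribed subspace suffices, a two-step correction (a main shift of size $O(|\delta^{(i)}|)$ followed by an auxiliary adjustment of the same order) will do the job. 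Combining the two one-sided Hausdorff bounds then yields the lemma.
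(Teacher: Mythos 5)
Your proposal aims at the stronger quantitative bound $d_{\textup{Haus}} \le C\,|\delta^{(i)}|$, which is plausibly true, but the route you sketch does not reach it. You yourself flag the crux: after shifting $\u$ to force the equality $l_1(\u') = l_{j_2}(\u')$ (note your displacement $\delta^{(i)}(\v_{j_2}-\v_1)/\|\v_{j_2}-\v_1\|^2$ has the wrong sign; it should be $\delta^{(i)}(\v_1-\v_{j_2})/\|\v_1-\v_{j_2}\|^2$), there is no guarantee that this tied value is still the \emph{minimum} of all the $l_j(\u')$, since the shift perturbs every other $l_k$ by a comparable amount. Your proposed remedy (induction on the codimension of the cell of $\Trop(Q^{(i)})$ containing $\u$, plus a vaguely described ``two-step correction'') is not carried out, and it is not clear that it can be: the cell structure of $\Trop(Q^{(i)})$ does not control which auxiliary ties of $\Trop(Q)$ one might accidentally land on or miss, and a generic ``auxiliary adjustment'' in the subspace preserving the engineered tie can just as well destroy the minimality again. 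As written, this step is a genuine gap.

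The paper avoids this difficulty entirely by not trying to hit the tie exactly. Instead, for $\u_1 \in \Trop(Q)$ with tied minimizers $1$ and $2$, it picks a fixed direction $\v_{12}$ along which $l_1$ strictly decreases and $l_2$ strictly increases, shows that after a step of length $\varepsilon/2$ the sign of $l_1^{(i)} - l_2^{(i)}$ has flipped (for $i$ large), and then observes that since the minimizer index set of the piecewise-linear function $\min_j l_j^{(i)}$ changes along this segment, the segment must cross the corner locus $\Trop(Q^{(i)})$ somewhere inside $B(\u_1,\varepsilon)$. This intermediate-value argument produces a point whose tie is automatically the minimum, with no need to verify it by hand. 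The reverse inclusion is handled symmetrically (Sublemmas~\ref{sublemma1} and~\ref{sublemma2}), and the full lemma follows by taking $i \ge \max(\iota(\varepsilon),\nu(\varepsilon))$. If you want to rescue your approach, the cleanest repair is to replace ``shift to enforce the equality'' by ``shift far enough to change the sign of the relevant difference, then invoke continuity,'' which is exactly the paper's device; your explicit shift length still supplies the desired $O(|\delta^{(i)}|)$ quantitative control.
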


\begin{proof}
We assume that $\delta^{(i)}$ is monotonically decreasing because we can similary deal with the case where the sequence is monotonically increasing. To present the proof in an organized manner, we begin by stating and proving two sublemmas. 

As the direction $\v_1$ is different from that of $\v_j$ for $j \geq 2$, we can take and fix a direction $\v_{1j}$ such that for all $t > 0$ and every $\u \in \R^n$, 
\begin{equation}\label{equationlemma7.311}
l_1(\u + t \v_{1j}) < l_1 (\u), \quad l_j(\u + t \v_{1j}) > l_j (\u).
\end{equation}
Regardless of our choice of $\u \in \R^n$, we have 
\begin{equation}\label{equationlemma7.312}
l_j (\u + t \v_{1j}) - l_j (\u) = c_{1j} \cdot t
\end{equation}
where $c_{1j} := \langle \v_j, \v_{1j} \rangle$, which is positive. Since $\delta^{(i)} \to 0$ as $i \to \infty$, for any $\varepsilon > 0$, there exists an integer $\iota(\varepsilon)$ such that for all $i \geq \iota(\varepsilon)$, $\delta^{(i)}$ is less than the minimum of the values $c_{1j} \cdot (\varepsilon/2)$ for $j$ with $2 \leq j \leq s$. 

\begin{sublemma}\label{sublemma1}
For every $\u_1 \in \Trop(Q)$ and the above choice of $\iota(\varepsilon)$, we have 
$$
\inf_{\u_2 \in \Trop(Q^{(i)})} d(\u_1, \u_2) < \varepsilon
$$
whenever $i \geq \iota(\varepsilon)$. Here, we emphasize that $\iota(\varepsilon)$ is independent of $\u_1$.
\end{sublemma}

\begin{proof} (of Sublemma~\ref{sublemma1})
For $\u_1 \in \Trop(Q)$, by Proposition~\ref{explicitdescriptionoftropicalizationoffunction}, there exists at least two indices $j_1$ and $j_2$  with $j_1 < j_2$ such that $l_{j_1}(\u_1) = l_{j_2}(\u_1)$ is the minimum of $\{l_{j}(\u_1) : 1 \leq j \leq s \}$. If one can take such indices $j_1$ and $j_2$ bigger than $1$, then $\u_1$ is contained in $\Trop(Q^{(i)})$ because the number $l_{j_1}(\u_1) = l_{j_1}^{(i)}(\u_1) = l_{j_2}^{(i)}(\u_1) = l_{j_2}(\u_1)$ is still the minimum of $\{ l_{j}^{(i)}(\u_1) : j = 1, \cdots ,s \}$ and hence $\inf_{\u_2 \in \Trop(Q^{(i)})} d(\u_1, \u_2) = 0 < \varepsilon$. Thus, it remains to consider the case where $l_1(\u_1) = l_2(\u_1) < l_j(\u_1)$ for any $j \geq 3$. Here, $j_2$ is assumed to be $2$ by renumbering $\{j: 2 \leq j \leq s\}$ without shuffling $j=1$. In this case, $\u_1$ is not in $\Trop(Q^{(i)})$ anymore because the minimum $l_2^{(i)}(\u_1) = l_2(\u_1)$ is solely attained by $l_2$. Taking $\u := \u_1 + (\varepsilon / 2) \v_{12} \in B(\u_0, \varepsilon)$, by ~\eqref{equationlemma7.311} and ~\eqref{equationlemma7.312}, we obtain 
\begin{align*}
l_1^{(i)}(\u) - l_2 (\u) &< l_1^{(i)}(\u_1) - l_2 (\u_1) - c \cdot (\varepsilon / 2) \\
&= l_1^{(i)}(\u_1) - l_1 (\u_1) - c \cdot (\varepsilon / 2) = \delta^{(i)} - c \cdot (\varepsilon / 2)
\end{align*}
for the positive constant $c := \min \{c_{1j}: 2 \leq j \leq s \}$. By our choice of $\iota(\varepsilon)$, whenever $i \geq \iota(\varepsilon)$, $\delta^{(i)} - c \cdot (\varepsilon / 2) \leq 0$ and hence $l^{(i)}_1(\u) \leq l_2(\u) = l^{(i)}_2(\u)$. 
Since $l^{(i)}_1(\u_1) > l^{(i)}_2(\u_1)$ and $l^{(i)}_1(\u) \leq l_2(\u) = l^{(i)}_2(\u)$ for some $\u \in B(\u_1, \varepsilon)$, it implies that there exists a point of $\Trop(Q^{(i)})$ contained in $B(\u_1, \varepsilon)$ if $i \geq \iota(\varepsilon)$. Therefore, the sublemma is justifed.
\end{proof}

As the direction of $\v_1$ is different from that of $\v_j$ for $j \geq 2$, we can take and fix a direction $\v_{1j}$ such that for all $t > 0$,
\begin{equation}\label{equationlemma7.33}
l_1 (\u + t \v_{1j}) > l_1 (\u), \quad l_j (\u + t \v_{1j} ) < l_j (\u) 
\end{equation}
for any $\u \in \R^n$. Regardless of our choice of $\u$, we have 
\begin{equation}\label{equationlemma7.34}
l_1 (\u + t \v_{1j} ) - l_1 (\u) = c_{1j} \cdot t
\end{equation}
where $c_{1j} := \langle \v_1, \v_j \rangle$, which is positive. Since $\delta^{(i)} \to 0$ as $i \to \infty$, for given $\varepsilon > 0$, there exists an integer $\nu(\varepsilon)$ such that for all $i$ with $i \geq \nu(\varepsilon)$, $\delta^{(i)}$ is less than the minimum of the values $c_{1j} \cdot (\varepsilon / 2)$ for $j$ with $2 \leq j \leq s$.

\begin{sublemma}\label{sublemma2}
For any sequence $\{ \u^{(i)}_2 \in \Trop (Q^{(i)}) : i \geq 1 \}$ and the above choice $\nu(\varepsilon)$, we have 
$$
\inf_{\u_1 \in \Trop(Q)} d(\u_1, \u_2^{(i)}) < \varepsilon
$$
whenever $i \geq \nu(\varepsilon)$. Here, we emphasize that $\nu(\varepsilon)$ is independent of $\{ \u^{(i)}_2 \in \Trop (Q^{(i)}) : i \geq 1 \}$.
\end{sublemma}

\begin{proof} (of Sublemma~\ref{sublemma2})
For $\u_2^{(i)} \in \Trop(Q^{(i)})$, by Proposition~\ref{explicitdescriptionoftropicalizationoffunction}, there exist at least two indices $j_1$ and $j_2$ with $1 \leq j_1 < j_2$ such that $l_{j_1}^{(i)} (\u_2^{(i)}) = l_{j_2}^{(i)} (\u_2^{(i)})$ is the minimum of $\{l_j^{(i)} (\u_2^{(i)}) : 1 \leq j \leq s \}$. As soon as $l_1(\u_2^{(i)}) = l_1^{(i)}(\u_2^{(i)}) - \delta^{(i)} \geq l_{j_1}^{(i)} (\u_2^{(i)}) = l_{j_2}^{(i)} (\u_2^{(i)}) = l_{j_1} (\u_2^{(i)}) = l_{j_2} (\u_2^{(i)})$, the minimum of $\{l_j (\u_2^{(i)}) : 1 \leq j \leq s \}$ is attained by $l_{j_1} (\u_2^{(i)})= l_{j_2}(\u_2^{(i)})$ and thus $\u_2^{(i)} \in \Trop(Q)$ and hence $\inf_{\u_1 \in \Trop(Q)} d(\u_1, \u_2^{(i)}) = 0 < \varepsilon$. Thus, it suffices to consider the case where 
\begin{equation}\label{equationlemma7.31}
l_1(\u_2^{(i)}) = l_1^{(i)}(\u_2^{(i)}) - \delta^{(i)} < l_{j}^{(i)} (\u_2^{(i)}) = l_{j} (\u_2^{(i)}).
\end{equation} 
for all $j \geq 2$. Since we have two indices $j_1, j_2$ with $l_{j_1}^{(i)} (\u_2^{(i)}) = l_{j_2}^{(i)} (\u_2^{(i)})$, we may assume that $l_2^{(i)}(\u_2^{(i)}) $ is the minimum of $\{ l_j^{(i)}(\u^{(i)}_2) : 1 \leq l \leq s\}$ by renumering $\{ 2 \leq j \leq s\}$ if necessary (Note that $j=1$ is fixed). In particular, we have 
\begin{equation}\label{equationlemma7.32}
l_2(\u^{(i)}_2) = l_2^{(i)}(\u^{(i)}_2) \leq l_1^{(i)}(\u^{(i)}_2).
\end{equation}
Taking $\u^{(i)} := \u^{(i)}_2 + (\varepsilon/2) \v_{12}$, by ~\eqref{equationlemma7.33}, ~\eqref{equationlemma7.34} and ~\eqref{equationlemma7.32}, we obtain
\begin{align*}
l_{2}(\u^{(i)}) - l_1(\u^{(i)}) &< l_{2}(\u_2^{(i)}) - l_1(\u_2^{(i)}) - c \cdot (\varepsilon / 2) \\
&\leq l_1^{(i)}(\u_2^{(i)}) - l_1(\u_2^{(i)}) - c \cdot (\varepsilon/2) = \delta^{(i)} - c \cdot (\varepsilon/2)
\end{align*}
for the positive constant $c := \min \{c_{1j}: 2 \leq j \leq s \}$. By our choice of $\nu(\varepsilon)$, whenever $i \geq \nu(\varepsilon)$, $\delta^{(i)} - c \cdot (\varepsilon / 2) \leq 0$ and $l_2(\u^{(i)}) \leq l_1(\u^{(i)})$. Since $l_1(\u^{(i)}_2) < l_2(\u^{(i)}_2)$ from ~\eqref{equationlemma7.31} and $l_2(\u^{(i)}) \leq l_1(\u^{(i)})$ for some $\u^{(i)} \in B(\u_2^{(i)}, \varepsilon)$, it implies that there exists a point of $\Trop(Q)$ contained in $B(\u^{(i)}_2, \varepsilon)$ if $i \geq \nu(\varepsilon)$. Therefore, the sublemma is justifed. 
\end{proof}

By Sublemma~\ref{sublemma1} and Sublemma~\ref{sublemma2}, for $i \geq \max(\iota(\varepsilon), \nu(\varepsilon))$, we obtain
\begin{align*}
&\sup_{\u_1 \in \Trop(Q)} \left( \inf_{\u_2 \in \Trop(Q^{(i)})} d(\u_1, \u_2) \right) < \varepsilon \\
&\sup_{\u_2 \in \Trop(Q^{(i)})} \left( \inf_{\u_1 \in \Trop(Q)} d(\u_1, \u_2) \right) < \varepsilon,
\end{align*}
which yields that 
$$
d_\text{Haus} \left(\Trop(Q), \Trop(Q^{(i)})\right) < \varepsilon.
$$
This completes the proof of Lemma~\ref{Hasudorffdistanceoftropicalization}.

\end{proof}

Now, we are ready to prove Theorem~\ref{maintheorem2}. 

\begin{proof} (of Theorem~\ref{maintheorem2})
Suppose that $L(\u)$ is a bulk-balanced fiber in a compact toric symplectic manifold $X$. Then, there exist a sequence $\{ P^{(i)} \}$ of polytopes in $M_\R \simeq \R^n$ and a sequence $\{ \u^{(i)}:  \u^{(i)} \in \text{Int}(P^{(i)}) \}$ of positions such that $P^{(i)}$ converges to $P$ with respect to the Hausdorff distance and $\u^{(i)}$ converges to an interior point $\u$ of $P$ with respect to the Euclidean distance. In order to show that $L(\u)$ is a strongly bulk-balanced fiber, it is enough to show that $\u$ lies in $\Trop(P, \m)$ for any $\m \in M$ by Thoerem~\ref{THEOREMA}. 

Since the fan $\Sigma$ of the toric manifold $X$ is fixed in the sequence of polytopes, each $P^{(i)}$ can be constructed by translating facets of $P$. Namely, 
$$
P^{(i)} = \{ \u : l^{(i)}_1(\u) \geq 0, \cdots , l^{(i)}_m(\u) \geq 0 \}
$$
where $l_j^{(i)}(\u) := l_j(\u) + \delta_j^{(i)}$. By Lemma~\ref{convergenceoflambdaj}, for each $j$, $\delta^{(i)}_j$ converges to $0$ as $i \to \infty$. By taking a subsequence if necessary, we may assume that for each $j$, the sequence $\{ \delta^{(i)}_j : i = 1, 2, \cdots \}$ is monotonic. 

We claim that $\Trop(P^{(i)}, \m)$ converges to $\Trop(P, \m)$. By renumbering the index, we may assume
$$
\left\{ l_j : \langle \m, \v_j \rangle \neq 0 \right\} = \left\{ l_1, \cdots, l_s \right\}.
$$
Let 
\begin{align*}
Q_j^{(i)} := \{ \u: l_{1}^{(i)}(\u) \geq 0, \cdots, l_{j}^{(i)}(\u) \geq 0, \, \l_{j+1}(\u) \geq 0, \cdots,   l_{s}(\u) \geq 0 \}.
\end{align*}
Note that $\Trop(Q_0^{(i)}) = \Trop (P, \m)$ and $\Trop(Q_s^{(i)}) = \Trop (P^{(i)}, \m)$. By the triangle inequality, we have 
\begin{align*}
d_{\text{Haus}} &(\Trop(P, \m), \Trop(P^{(i)}, \m)) = d_{\text{Haus}} (\Trop(Q_0^{(i)}), \Trop(Q_s^{(i)})) \\
&\leq d_{\text{Haus}} (\Trop(Q_0^{(i)}), \Trop(Q_1^{(i)})) + \cdots + d_{\text{Haus}} (\Trop(Q_{s-1}^{(i)}), \Trop(Q_s^{(i)})) 
\end{align*}
By Lemma~\ref{Hasudorffdistanceoftropicalization}, the right-hand side converges to $0$. Hence, the claim is derived. 

As $\u^{(i)} \in \Trop(P^{(i)}, \m)$ converges to $\u \in \text{Int}(P)$ and $\Trop(P^{(i)}, \m)$ converges to  $\Trop(P, \m)$, $\u$ is also contained in $\Trop(P, \m)$. Hence, the proof is completed.
\end{proof}

\end{document}